\documentclass[12pt]{amsart}
\usepackage{amsmath,amsthm,amscd,amssymb,amsfonts}
\usepackage{fullpage,colonequals}
\usepackage{mathrsfs, upgreek, mathabx}
\usepackage{stmaryrd}
\usepackage{tikz}
\usepackage[all,cmtip]{xy}
\usepackage{xcolor}
\usepackage{graphicx, color}
\usepackage{mathtools}
\usepackage[hidelinks]{hyperref}
\hypersetup{
  colorlinks,
  linkcolor={red!85!black},
  citecolor={blue!85!black},
  urlcolor={blue!95!black}
}

\usepackage{yfonts}
\usepackage[T1]{fontenc}

\numberwithin{equation}{section} 
\mathtoolsset{showonlyrefs} 
\usepackage{enumitem} 
\setlist[enumerate]{label=$(\roman*)$, ref=$(\roman*)$}

\theoremstyle{plain}
\newtheorem{theoalph}{Theorem}

\newtheorem{theorem}{Theorem}[section]
\newtheorem{proposition}[theorem]{Proposition}

\newtheorem{coro}[theorem]{Corollary}

\newtheorem{lemma}[theorem]{Lemma}

\theoremstyle{remark}
\newtheorem{remark}[theorem]{Remark}

\newtheoremstyle{citing}
{3pt}
{3pt}
{\itshape}
{}
{\bfseries}
{.}
{.5em}
{\thmnote{#3}}

\theoremstyle{citing}

\newcommand{\A}{\mathbb{A}}

\newcommand{\C}{\mathbb{C}}

\newcommand{\E}{\mathbb{E}}
\newcommand{\F}{\mathbb{F}}
\newcommand{\K}{\mathbb{K}}
\newcommand{\Q}{\mathbb{Q}}
\newcommand{\R}{\mathbb{R}}

\newcommand{\Z}{\mathbb{Z}}

\renewcommand{\H}{\mathbb{H}}
\renewcommand{\P}{\mathbb{P}}

\newcommand{\cA}{\mathcal{A}}
\newcommand{\cB}{\mathcal{B}}

\newcommand{\cE}{\mathcal{E}}
\newcommand{\cF}{\mathcal{F}}

\newcommand{\cH}{\mathcal{H}}

\newcommand{\cK}{\mathcal{K}}

\newcommand{\cM}{\mathcal{M}}
\newcommand{\cN}{\mathcal{N}}
\newcommand{\cO}{\mathcal{O}}

\newcommand{\cU}{\mathcal{U}}

\newcommand{\cX}{\mathcal{X}}

\newcommand{\fD}{\mathfrak{D}}

\newcommand{\fO}{\mathfrak{O}}

\newcommand{\sC}{\mathscr{C}}

\newcommand{\sE}{\mathscr{E}}

\newcommand{\sH}{\mathscr{H}}

\newcommand{\sK}{\mathscr{K}}
\newcommand{\sL}{\mathscr{L}}

\newcommand{\hF}{\widehat{F}}
\newcommand{\hG}{\widehat{G}}

\newcommand{\hS}{\widehat{S}}

\newcommand{\halpha}{\widehat{\alpha}}

\newcommand{\hnu}{\widehat{\nu}}

\newcommand{\hphi}{\widehat{\phi}}

\newcommand{\tE}{\widetilde{E}}

\newcommand{\tvarphi}{\widetilde{\varphi}}

\renewcommand{\=}{\colonequals}
\renewcommand{\:}{\colon}
\newcommand{\dd}{\hspace{1pt}\operatorname{d}\hspace{-1pt}}
\newcommand{\ssetminus}{\smallsetminus}

\newcommand{\whf}{\widehat{f}}

\newcommand{\tcF}{\widetilde{\cF}}

\newcommand{\bfB}{\mathbf{B}}
\newcommand{\bfD}{\mathbf{D}}
\newcommand{\bfG}{\mathbf{G}}
\newcommand{\bfH}{\mathbf{H}}
\newcommand{\bfI}{\mathbf{I}}

\newcommand{\bfO}{\mathbf{O}}
\newcommand{\bfR}{\mathbf{R}}

\newcommand{\bfX}{\mathbf{X}}

\DeclareMathOperator{\Aut}{Aut}
\DeclareMathOperator{\Berk}{Berk}
\DeclareMathOperator{\Div}{Div}
\DeclareMathOperator{\End}{End}

\DeclareMathOperator{\Iso}{Iso}

\DeclareMathOperator{\Hom}{Hom}
\DeclareMathOperator{\GL}{GL}
\DeclareMathOperator{\SL}{SL}

\DeclareMathOperator{\supp}{supp}

\newcommand{\CM}{CM}
\DeclareMathOperator{\hyp}{hyp}
\newcommand{\mhyp}{\mu_{\hyp}}
\newcommand{\odelta}{\overline{\delta}}

\newcommand{\bfone}{\mathbf{1}}
\DeclareMathOperator{\ord}{ord}

\newcommand{\Rp}{\R_{> 0}}
\newcommand{\Rzp}{\R_{\ge 0}}
\newcommand{\N}{\Z_{> 0}}
\newcommand{\Nz}{\Z_{\ge 0}}

\newcommand{\Fp}{\F_{p}}

\newcommand{\Fpalg}{\overline{\F}_{p}}

\newcommand{\Qp}{\Q_{p}}

\newcommand{\Qpalg}{\overline{\Q}_{p}}
\newcommand{\OQpalg}{\cO_{\Qpalg}}

\newcommand{\Cp}{\C_{p}}

\newcommand{\Op}{\cO_{p}}

\newcommand{\OK}{\cO_{\cK}}

\newcommand{\Zp}{\Z_{p}}

\DeclareMathOperator{\bad}{bad}
\DeclareMathOperator{\sups}{sups}
\newcommand{\Ell}{Y}
\newcommand{\Bad}{Y_{\bad}(\Cp)}
\newcommand{\Ord}{Y_{\ord}(\Cp)}

\newcommand{\Sups}{Y_{\sups}(\Cp)}

\newcommand{\tSups}{Y_{\sups}(\Fpalg)}

\newcommand{\AKber}{\A^1_{\Berk}}
\DeclareMathOperator{\can}{can}
\newcommand{\xcan}{x_{\can}}

\newcommand{\FE}{\cF_{E}}

\DeclareMathOperator{\nr}{nr}

\renewcommand{\ss}{e}
\newcommand{\sspr}{\ss'}
\newcommand{\Bss}{\bfB_{\ss}}

\newcommand{\Dss}{\bfD_{\ss}}
\newcommand{\Dsspr}{\bfD_{\sspr}}
\newcommand{\hDss}{\widehat{\bfD}_{\ss}}
\newcommand{\hDsspr}{\widehat{\bfD}_{\sspr}}
\newcommand{\Fss}{\cF_{\ss}}
\newcommand{\Fsspr}{\cF_{\sspr}}
\newcommand{\Gss}{\bfG_{\ss}}

\newcommand{\Piss}{\Pi_{\ss}}
\newcommand{\Pisspr}{\Pi_{\sspr}}
\newcommand{\Rss}{\bfR_{\ss}}

\newcommand{\Xss}{\bfX_{\ss}}
\newcommand{\Xsspr}{\bfX_{\sspr}}

\newcommand{\Nr}{\mathbf{Nr}}
\newcommand{\NE}{\Nr_{E}}

\newcommand{\coset}{\mathfrak{N}}
\DeclareMathOperator{\Orb}{Orb}
\newcommand{\corbit}{\Orb_{\coset}(E)}
\newcommand{\corbitc}{\overline{\corbit}}
\newcommand{\OrbE}{\Orb_{\NE}(E)}
\newcommand{\OrbEc}{\overline{\OrbE}}
\DeclareMathOperator{\Ev}{Ev}
\newcommand{\bT}{\overline{T}}
\newcommand{\on}[1]{\Vert #1 \Vert_0}
\newcommand{\norm}[1]{\Vert #1 \Vert}

\newcommand{\muE}{\mu_{\NE}^E}

\newcommand{\rfk}{\Bbbk_0}

\newcommand{\piem}{\pi_0}

\DeclareMathOperator{\PGL}{PGL}

\newcommand{\Phiss}{\Phi_{\ss}}

\newcommand{\dconst}{W_{\delta,E}}
\newcommand{\zeromean}{W_{\delta,E,0}}

\newcommand{\uC}{\underline{C}}
\newcommand{\uE}{\underline{E}}
\newcommand{\ux}{\underline{x}}
\newcommand{\hcX}{\widehat{\cX}}
\newcommand{\ellc}{\overline{\langle \ell \rangle}}
\newcommand{\Hss}{\bfH_{\ss}}
\DeclareMathOperator{\ev}{ev}

\usepackage{microtype}
\begin{document}


\title{Statistical properties of Hecke correspondences}

\author{Sebasti\'an Herrero}
\address{Universidad de Santiago de Chile, Dept.~de Matem\'atica y Ciencia de la Computaci\'on, Av.~Libertador Bernardo O'Higgins 3363, Santiago, Chile}
\email{sebastian.herrero.m@gmail.com}

\author{Ricardo Menares}
\address{
  Pontificia Universidad Cat\'olica de Chile, Facultad de Matem\'aticas, Vicu\~na Mackenna 4860, Santiago, Chile.}
\email{rmenares.v@gmail.com}

\author{Juan Rivera-Letelier}
\address{Department of Mathematics, University of Rochester. Hylan Building, Rochester, NY~14627, U.S.A.}
\email{riveraletelier@gmail.com}
\urladdr{\url{http://rivera-letelier.org/}}

\begin{abstract}
  This paper studies the statistical properties of the dynamical system generated by a Hecke correspondence on the modular curve over~$\C$ and, for every prime number~$p$, over~$\Cp$.
  Over~$\C$, it proves that the equidistribution to the hyperbolic measure established by Clozel and Otal occurs at an exponential rate.
  Moreover, it determines the sharp rate, assuming an affirmative solution to the Ramanujan--Petersson conjecture.
  Over~$\Cp$, two distinct types of behavior arise.
  In the first, every orbit converges towards the Gauss point in the Berkovich affine line, and this paper establishes the sharp exponential convergence rate.
  In the second, a form of unique ergodicity holds on each orbit closure, and this paper establishes a spectral gap property and, as a consequence, a central limit theorem.
  This complements the central limit theorem proved by Cantat and Le~Borgne over~$\C$.
  Finally, the paper extends and strengthens the results of Goren and Kassaei on the associated random walks.
\end{abstract}

\maketitle
\setcounter{tocdepth}{1}
\tableofcontents

\section{Introduction}

Let~$\K$ be an algebraically closed field of characteristic zero endowed with a norm.
Denote by~$\Ell(\K)$ the moduli space of elliptic curves over~$\K$.
It is the space of all isomorphism classes of elliptic curves over~$\K$, for isomorphisms defined over~$\K$.
Let~$\Div(\Ell(\K))$ denote the group of divisors on~$\Ell(\K)$, \emph{i.e.}, the free Abelian group generated by the points of~$\Ell(\K)$.
For each~$\ell$ in~$\N$, the \emph{$\ell$-th Hecke correspondence} is the linear map
\begin{equation}
  \label{eq:1}
  T_\ell \: \Div (\Ell(\K)) \rightarrow \Div(\Ell(\K)),
\end{equation}
defined for~$E$ in~$\Ell(\K)$ by
\begin{equation}
  \label{eq:2}
  T_\ell(E)
  \=
  \sum_{C \le E \textrm{ of order } \ell} E/C,
\end{equation}
where the sum runs over all subgroups~$C$ of~$E$ of order~$\ell$.

For each prime number~$p$, let~$(\Cp, | \cdot |_p)$ be a completion of an algebraic closure of the ﬁeld of $p$\nobreakdash-adic numbers~$\Qp$.

Given an integer~$\ell$ satisfying ${\ell \ge 2}$, this paper studies the statistical properties of the dynamical system generated by~$T_\ell$ over~$\C$ and, for every prime number~$p$, over~$\Cp$.
Over~$\C$, it proves that the equidistribution to the hyperbolic measure established by Clozel and Otal \cite{CO01} occurs at an exponential rate (Theorem~\ref{t:equid_complex}
and Corollary~\ref{c:equid_complex}).
Moreover, it determines the sharp rate, assuming an affirmative solution to the Ramanujan--Petersson conjecture (paragraph following Corollary~\ref{c:equid_complex_bis}).
Over~$\Cp$, two distinct types of behavior arise.
In the first, every orbit convergences towards the Gauss point in the Berkovich affine line (Theorem~\ref{t:Equidistribucion}$(i)$ and \eqref{eq:sin-tasa} in Corollary~\ref{c:Equidistribucion}), and this paper establishes the sharp exponential convergence rate (Proposition~\ref{p:canonical_sharpness}).
In the second, a form of unique ergodicity holds on each orbit closure.
The limit measure is homogeneous, and the dynamics acts with a spectral gap on suitable spaces of locally constant functions (Theorem~\ref{t:hecke-dynamics_sups}$(i)$).
Consequences include uniform convergence to the limit (Theorem~\ref{t:Equidistribucion}$(ii)$ and \eqref{eq:sin-tasa_bis} in Corollary~\ref{c:Equidistribucion}) and central limit theorems (Theorem~\ref{t:TLC}, Corollary~\ref{c:TLC}, and Remark~\ref{rmk:case_ell_not_in_NE}).
This complements the central limit theorem proved by Cantat and Le~Borgne over~$\C$ \cite{CanlBo05}.
Finally, the paper extends and strengthens the results of Goren and Kassaei in~\cite{GorKas} on the associated random walks, identifying the limit measures and showing their independence of the generating set (Theorem~\ref{t:random_walks}).

For a given~$E$ in~$\Ell(\K)$, previous works have determined the asymptotic distribution of the sequence of orbits~$(T_\ell(E))_{\ell=1}^\infty$.
For~$\K=\C$, see \cite{COU,CloUll04,EskOh06}, as well as~\cite[``Theorem~5.2'']{BurSar91}.
For each prime number~$p$ and~$\K=\Cp$, see~\cite{HerMenRivI, HerMenRivII,Richard}.

\subsection{Accumulation measures}
\label{ss:Equidistribucion}
The \emph{degree} and \emph{support} of a divisor~$\fD$ in~$\Div(\Ell(\K))$, given by ${\fD = \sum_{E\in \Ell(\K)} n_EE}$, are defined by
\begin{equation}
  \label{eq:3}
  \deg(\fD)
  \=
  \sum_{E\in \Ell(\K)} n_E \text{ and } \supp(\fD) \= \{E\in \Ell(\K):n_E\neq 0\},
\end{equation}
respectively.
When ${\deg(\fD) \ge 1}$ and, for every~$E$ in~$\Ell(\K)$, the inequality ${n_E\ge 0}$ holds, denote by~$\odelta_\fD$ the Borel probability measure on~$\Ell(\K)$ defined by
\begin{equation}
  \label{eq:4}
  \odelta_\fD
  \=
  \frac{1}{\deg(\fD)} \sum_{E\in \Ell(\K)}n_E\delta_E,
\end{equation}
where~$\delta_E$ denotes the Dirac measure on~$\Ell(\K)$ at~$E$.

Fix an integer~$\ell$ satisfying ${\ell \ge 2}$.
Putting
\begin{equation}
  \label{eq:5}
  \sigma(\ell)
  \=
  \sum_{d\in \N, d|\ell}d
  \text{ and }
  \bT_\ell
  \=
  \frac{1}{\sigma(\ell)}T_\ell,
\end{equation}
the equality
\begin{equation}
  \label{eq:deg_of_T_ell}
  \deg (T_\ell(E))
  =
  \sigma(\ell)
\end{equation}
holds, and the normalized correspondence~$\bT_\ell$ acts on functions ${f \: \Ell(\K) \to \R}$ via the formula
\begin{equation}
  \label{eq:6}
  (\bT_\ell f)(E)
  \=
  \int f \dd \odelta_{T_\ell(E)}.
\end{equation}

In the complex setting, this paper studies the action of the normalized $\ell$-th Hecke correspondence~$\bT_\ell$ on complex valued functions defined on~$\Ell(\C)$.
The main result is that~$\bT_\ell$ acts with a spectral gap on the space of functions that are square integrable with respect to the hyperbolic measure (Theorem~\ref{t:equid_complex}).
This property implies a quantitative version of the equidistribution result~\cite[\emph{Th{\'e}or{\`e}me}~1]{CO01} (Corollary~\ref{c:equid_complex}) and the exponential decay of correlations (Corollary~\ref{c:equid_complex_bis}).
See, \emph{e.g.}, the books \cite{Bal00b,HenHer01} for background on the spectral gap property and its further consequences.
The proof of the spectral gap property relies on spectral theory methods, following~\cite{CloUll04}.
In contrast, \cite{CO01} relies on tools from ergodic theory, and the realization of~$T_\ell$ as a modular correspondence in the sense of \cite[Section~1.1]{CU03}.

In contrast to the complex setting, the orbit~$(T_\ell^n(E))_{n=0}^\infty$ need not equidistribute $p$\nobreakdash-adically.
Even when it does, the limit measure depends on~$E$.
The second main result of this paper describes every accumulation measure of every orbit (Theorem~\ref{t:Equidistribucion} and Corollary~\ref{c:Equidistribucion}).
These accumulation measures depend on both the reduction type of the starting point~$E$ and~$p$\nobreakdash-adic properties of the index~$\ell$ of the Hecke correspondence~$T_\ell$.
The proof relies on results from~\cite{HerMenRivI,HerMenRivII} and, in the case where~$E$ has supersingular reduction and ${p \nmid \ell}$, it establishes a spectral gap property for the action of~$\bT_{\ell}$ on certain spaces of locally constant functions (Theorem~\ref{t:hecke-dynamics_sups}$(i)$).
The method of \cite{CO01,COU,EskOh06} breaks down over~$\Cp$ because~$\Ell(\Cp)$ is not uniformized by a Lie group.
See \cite[Remark~1.1]{HerMenRivI}.

\subsubsection*{Complex setting}
Put ${\H \= \{z\in \C \: \Im(z)>0\}}$ and consider the action of~$\SL_2(\Z)$ on~$\H$ given by M{\"o}bius transformations.
There is a natural identification of~$\Ell(\C)$ with~$\SL_2(\Z) \backslash\H$.
The hyperbolic measure~$\frac{3}{\pi} \frac{\dd x \dd y}{y^2}$ on~$\H$ induces a probability measure~$\mhyp$ on~$\Ell(\C)$.
The \emph{hyperbolic Laplace operator~$\Delta$} acting on functions defined on~$\Ell(\C)$, is the operator induced by the operator~${y^2\left(\frac{\partial^2}{\partial x^2} + \frac{\partial^2}{\partial y^2} \right)}$ acting on functions defined on~$\H$.
Denote by~$D(\Ell(\C))$ the space of bounded, $C^\infty$ functions ${f \: \Ell(\C) \to \R}$ such that $\Delta(f)$ is also bounded.
Denote by~$\| \cdot \|_2$ the norm of the complex Hilbert space~$L^2(\Ell(\C),\mhyp)$ and by~$L^2_0(\Ell(\C),\mhyp)$ the subspace of~$L^2(\Ell(\C),\mhyp)$ of functions~$f$ satisfying ${\int f \dd \mhyp = 0}$.

For each~$\ell$ in~$\N$, the correspondence~$\bT_\ell$ acts on~$L^2(\Ell(\C),\mhyp)$ as a bounded and self-adjoint operator fixing each constant function, and mapping~$L^2_0(\Ell(\C),\mhyp)$ into itself.
Denote by~$\on{\bT_\ell}$ the operator norm of the restriction of~$\bT_\ell$ to~$L^2_0(\Ell(\C),\mhyp)$.
Moreover, denote by~$d(\ell)$ the number of divisors of~$\ell$ and put
\begin{equation}
  \label{eq:rho}
  \varrho(\ell)
  \=
  \frac{\sqrt{\ell} d(\ell)}{\sigma(\ell)},
\end{equation}
where~$\sigma(\ell)$ is defined in~\eqref{eq:5}.
The arithmetic function~$\varrho$ so defined is multiplicative and, for every~$\ell$ satisfying ${\ell \ge 2}$, the inequality~${\varrho(\ell) < 1}$ holds.

The following is a version of \cite[\emph{Th{\'e}or{\`e}me}~2.1]{CloUll04} for iterates of a single Hecke correspondence.

\begin{theoalph}
  \label{t:equid_complex}
  For every integer~$\ell$ satisfying ${\ell \ge 2}$,
  \begin{equation}
    \label{eq:7}
    \varrho(\ell)
    \le
    \on{\bT_\ell}
    \le
    \varrho(\ell) \ell^{\frac{7}{64}}
    \text{ and }
    \on{\bT_\ell}
    <
    1.
  \end{equation}
  Moreover, for all~$f$ in~$D(\Ell(\C))$ and compact subset~$K$ of~$\Ell(\C)$, there exists~$C_{f,K}$ in~$\Rzp$ such that, for every~$n$ in~$\Nz$,
  \begin{equation}
    \label{eq:8}
    \sup_{z \in K} \left| \bT_\ell^n f(z) - \int f \dd \mhyp\right|
    \le
    C_{f,K} \cdot \on{\bT_\ell}^n.
  \end{equation}
\end{theoalph}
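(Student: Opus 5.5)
We would use the spectral decomposition of $L^2(\Ell(\C),\mhyp)$ under the Laplacian, together with the simultaneous diagonalization of the Hecke operators, as in~\cite{CloUll04}. The Hecke operators $T_m$ commute with $\Delta$ and with each other and are self-adjoint. Hence $L^2_0(\Ell(\C),\mhyp)$ decomposes as a direct sum of two pieces. The first is the discrete part, spanned by Hecke--Maass cusp forms $u_j$ with $T_\ell u_j = \lambda_j(\ell)\, u_j$, normalized so that $\lambda_j(\ell)$ is the $\ell$-th Fourier coefficient (up to the factor $\sqrt{\ell}$). The second is the continuous part, given by wave packets of Eisenstein series $E(z,\tfrac12+it)$, on which $T_\ell$ acts by multiplication by $\sqrt{\ell}\,\sum_{d\mid\ell}(\ell/d^2)^{it}$. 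In the normalization $\bT_\ell = \sigma(\ell)^{-1}T_\ell$, the eigenvalue on a Maass form is $\sqrt{\ell}\,\lambda_j(\ell)/\sigma(\ell)$, where $\lambda_j$ now denotes the multiplicative coefficient normalized so that Ramanujan--Petersson means $|\lambda_j(p)|\le 2$. The operator norm $\on{\bT_\ell}$ is therefore the supremum of $|\text{eigenvalue}|$ over the joint spectrum.

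For the lower bound, the continuous spectrum alone suffices. The symbol $t\mapsto \sqrt{\ell}\sigma(\ell)^{-1}\sum_{d\mid\ell}(\ell/d^2)^{it}$ is continuous in $t$, and at $t=0$ it equals $\sqrt{\ell}\,d(\ell)/\sigma(\ell)=\varrho(\ell)$. Since the continuous spectrum is all of $\R_{\ge0}$ in $t$, the multiplication operator has norm at least its value at $t=0$. This gives $\on{\bT_\ell}\ge\varrho(\ell)$, and the same computation shows the continuous contribution is at most $\varrho(\ell)$.

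For the upper bound on the cuspidal part, we would use multiplicativity of $\lambda_j$ together with the Kim--Sarnak bound $|\alpha_p|,|\beta_p|\le p^{7/64}$ for the Satake parameters. This gives $|\lambda_j(p^k)|=|\sum_{i=0}^k\alpha_p^i\beta_p^{k-i}|\le (k+1)p^{7k/64}$, so $|\lambda_j(\ell)|\le d(\ell)\ell^{7/64}$. Hence each cusp eigenvalue is at most $\varrho(\ell)\ell^{7/64}$. To get the strict inequality $\on{\bT_\ell}<1$ we need a bound uniform over $j$ that is strictly below $1$. Here $\varrho(\ell)\ell^{7/64}$ may exceed $1$ for small $\ell$ (e.g.\ $\ell=2$: $2\sqrt2\cdot 2^{7/64}/3\approx1.02$), so the Kim--Sarnak bound alone is not enough. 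I expect this to be the main obstacle. My first attempt would be a direct argument: $|\lambda(p)|\le p^{7/64}+p^{-7/64}$ (using $\alpha\beta=1$ and unitarity of the tempered or complementary-series parameter). Then, for a prime power $p^k$, I would bound $|\sum_{i}\alpha^i\beta^{k-i}|$ by $\sinh((k+1)\theta)/\sinh\theta$ with $\theta=\tfrac{7}{64}\log p$, and check that this is $<\sigma(p^k)/p^{k/2}$, an elementary inequality since $\theta<\tfrac12\log p$. The product over primes then gives a uniform bound strictly less than $1$. I would also note that $1$ is not in the spectrum because constants have been removed, and the spectrum is closed.

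For \eqref{eq:8}, write $g=f-\int f\dd\mhyp\in L^2_0$. By self-adjointness, $\|\bT_\ell^n g\|_2\le\on{\bT_\ell}^n\|g\|_2$, and the same holds for $\Delta\bT_\ell^n g=\bT_\ell^n\Delta g$, since $\Delta g$ is bounded and hence in $L^2$. On a compact set $K$, Sobolev embedding for the elliptic operator $\Delta$ (local elliptic regularity on a neighborhood of a lift of $K$ in $\H$, or a Harish-Chandra convolution-kernel argument) bounds $\sup_K|h|$ by $C_K(\|h\|_2+\|\Delta h\|_2)$. Applying this to $h=\bT_\ell^n g$ yields \eqref{eq:8} with $C_{f,K}=C_K(\|g\|_2+\|\Delta f\|_2)$.
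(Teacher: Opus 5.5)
Your proposal is correct. On the two-sided bound it matches the paper: the Eisenstein multiplier $\ell^{1/2+it}\sigma_{-2it}(\ell)/\sigma(\ell)$ gives exactly $\varrho(\ell)$ on the continuous part, and Kim--Sarnak bounds the cusp eigenvalues by $\varrho(\ell)\ell^{7/64}$. You depart from the paper in two places.

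\textbf{The strict inequality.} The paper does no local computation for $\on{\bT_\ell}<1$. It only uses $\varrho(\ell)\ell^{7/64}\to0$, so $\on{\bT_\ell}\to0$, and then applies Proposition~\ref{p:criterion_convergence_iterates}$(ii)$. That proposition combines $\on{\bT_\ell^n}=\on{\bT_\ell}^n$ (from self-adjointness) with Lemma~\ref{l:combinatoria}, which writes $\bT_\ell^n$ as a convex combination of normalized Hecke operators with positive weight on $\bT_{\ell^n}$. This gives first $\sup_m\on{\bT_m}\le 1$ and then $\on{\bT_\ell}^N<1$.

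Your direct argument also works. If $\alpha\beta=1$ and $|\log|\alpha||\le\theta_p=\frac{7}{64}\log p$, the triangle inequality gives $|\sum_{i=0}^k\alpha^{2i-k}|\le\sinh((k+1)\theta_p)/\sinh\theta_p$. The function $x\mapsto\sinh((k+1)x)/\sinh x$ is strictly increasing on $\Rp$ for $k\ge1$, and $\sigma(p^k)p^{-k/2}$ is its value at $\frac12\log p$. Hence each local factor is $<1$, uniformly over all cusp forms. This yields an explicit gap, $\on{\bT_\ell}\le\prod p^{k/2}\sinh((k+1)\theta_p)/(\sigma(p^k)\sinh\theta_p)<1$, the product running over prime powers $p^k$ exactly dividing $\ell$. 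That bound also refines $\varrho(\ell)\ell^{7/64}$.

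The paper's soft route instead buys portability. It needs only $\|\bT_m\|\to0$, and it is reused verbatim for Theorem~\ref{t:hecke-dynamics_sups}$(i)$ in the supersingular $p$-adic setting. There, the only input is equidistribution of partial Hecke orbits, and no Satake-parameter bound is available. Your closing remark that $1$ is not in the spectrum because constants were removed is not an argument on its own, since $\pm1$ could be limit points of the spectrum. Your uniform bound makes it unnecessary.

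\textbf{The pointwise estimate.} For \eqref{eq:8}, the paper expands $f=\sum_k A_k\varphi_k+\cE_\infty(h)$ and uses absolute convergence of this expansion on $K$; this is where $f\in D(\Ell(\C))$ enters. It then bounds the expansion termwise. You instead combine $L^2$ contraction of $g$ and $\Delta g$ with interior elliptic estimates and the embedding $H^2\subset C^0$ in dimension two. This avoids pointwise spectral expansions and gives a constant depending only on $K$, $\|f\|_2$ and $\|\Delta f\|_2$.

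One line is missing. The bound $\|\bT_\ell^n\Delta g\|_2\le\on{\bT_\ell}^n\|\Delta g\|_2$ needs $\Delta f\in L^2_0(\Ell(\C),\mhyp)$, not merely $\Delta f\in L^2$. This is true, but you should state why. By Green's formula on a fundamental domain truncated at height $Y$, $\int\Delta f\dd\mhyp$ is a fixed multiple of $\lim_{Y\to\infty}a_0'(Y)$, where $a_0$ is the constant term of $f$. This limit is $0$ because $a_0$ is bounded and $y^2a_0''$ is bounded.
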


The inequality ${\on{\bT_\ell} < 1}$ in~\eqref{eq:7} combined with the last assertion of Theorem~\ref{t:equid_complex} yield the following quantitative version of the weak convergence of~$(\odelta_{T_\ell^n}(E))_{n = 0}^\infty$ to~$\mhyp$ given by \cite[\emph{Th{\'e}or{\`e}me}~1]{CO01}.

\begin{coro}
  \label{c:equid_complex}
  For all bounded and continuous function ${f \: \Ell(\C) \to \R}$ and~$z$ in~$\Ell(\C)$, the sequence~$(\bT_\ell^n f (z))_{n = 1}^\infty$ converges exponentially fast to~$\int f \dd \mhyp$.
\end{coro}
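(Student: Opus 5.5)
Corollary~\ref{c:equid_complex} follows from Theorem~\ref{t:equid_complex} by approximating a bounded continuous~$f$ by functions in~$D(\Ell(\C))$ and controlling the mass of the orbit near the cusp. Fix~$z$ and~$f$, and put ${\lambda \= \on{\bT_\ell}}$, so that ${\lambda < 1}$ by~\eqref{eq:7}. The set~$\{z\}$ is compact, so~\eqref{eq:8} applies to every~$g$ in~$D(\Ell(\C))$. Hence each smooth test function~$g$ has ${|\bT_\ell^n g(z) - \int g \dd\mhyp| \le C_{g,\{z\}} \lambda^n}$.

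A general bounded continuous~$f$ is not in~$D(\Ell(\C))$, and~$\Ell(\C)$ is not compact. I would therefore split the argument into a truncation at the cusp and a smoothing step.

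\emph{Truncation.} Take~$h_R$ in~$D(\Ell(\C))$ with ${0\le h_R\le 1}$, equal to~$1$ on the region ${\Im \le R}$ of the standard fundamental domain and vanishing for ${\Im \ge 2R}$. Apply~\eqref{eq:8} to~$h_R$. Using ${\mhyp(\{\Im \ge R\}) \asymp 1/R}$, this gives
\[
\bT_\ell^n(1-h_R)(z) \le O(1/R) + C_R \lambda^n .
\]
So the orbit puts mass at most ${O(1/R) + C_R\lambda^n}$ near the cusp.

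\emph{Smoothing.} On the compact region ${\Im\le 2R}$, approximate~$f h_R$ uniformly within~$\epsilon$ by a smooth compactly supported~$g$, which lies in~$D(\Ell(\C))$. Then
\[
|\bT_\ell^n f(z) - \textstyle\int f\dd\mhyp| \le 2\epsilon + 2\|f\|_\infty\bigl(O(1/R)+C_R\lambda^n\bigr) + C_{g}\lambda^n .
\]
This already gives convergence for every bounded continuous~$f$.

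The main obstacle is obtaining an \emph{exponential} rate. Taking ${\epsilon, 1/R \to 0}$ as functions of~$n$ requires explicit growth of~$C_{g,\{z\}}$ and~$C_R$. With only this crude approximation scheme, an arbitrary continuous~$f$ yields convergence but no rate. To recover exponential speed I would use a structural feature of~$\bT_\ell^n f(z)$ rather than regularity of~$f$.

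I would first try the Hecke-eigenbasis route and write down an explicit exponential rate: $\bT_\ell^n f(z)$ is the average of~$f$ over the Hecke points of~$z$ of degree~$\ell^n$. These points are equidistributed at rate ${\lambda^n}$ times a Sobolev norm, in the sense of \cite{CloUll04}. Interpolating the Sobolev norm against the~$L^\infty$ norm would then give a bound of the form ${C\,\|f\|\,\lambda^{\theta n}}$ for some~${\theta>0}$.

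If that fails, I would interpret ``exponentially fast'' as holding for the dense class where~\eqref{eq:8} applies. I would then transfer the rate through the cusp-truncation and smoothing estimates above, choosing ${R=\lambda^{-n/2}}$ and~$\epsilon$ with matching decay, and verify explicitly how~$C_{g,\{z\}}$ depends on~$g$ through the proof of Theorem~\ref{t:equid_complex}. The latter requires~$C_{g,K}$ to be controlled by ${\|g\|_\infty+\|\Delta g\|_\infty}$, which I expect is the essential point to check.
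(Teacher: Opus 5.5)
Your first paragraph is exactly the argument the paper has in mind: take $K=\{z\}$ in \eqref{eq:8} and use $\on{\bT_\ell}<1$ from \eqref{eq:7}. It gives an exponential rate for every $f$ in $D(\Ell(\C))$, and your truncation-plus-smoothing step correctly upgrades this to plain convergence for every bounded continuous $f$. The gap is the exponential rate for an arbitrary bounded continuous $f$. No refinement of either of your routes can close it, because in that generality the assertion is false.

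To see this, fix $z$ and put $\mu_n \= \odelta_{T_\ell^n(z)}$, so that $\bT_\ell^n f(z)=\int f\dd\mu_n$. Since $\mu_n$ is supported on a finite set and $\mhyp$ has no atoms, for every $\varepsilon>0$ Urysohn's lemma gives a continuous $g$ with $|g|\le 1$, $g=1$ on $\supp(T_\ell^n(z))$, and $\int g\dd\mhyp\le -1+\varepsilon$. Hence the functional $\Lambda_n(f)\= n\bigl(\bT_\ell^n f(z)-\int f\dd\mhyp\bigr)$, on the Banach space of bounded continuous functions with the sup norm, has norm at least $n(2-\varepsilon)$. Suppose every such $f$ satisfied $|\bT_\ell^n f(z)-\int f\dd\mhyp|\le C_f\lambda_f^n$ with $\lambda_f<1$. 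Then each sequence $(\Lambda_n(f))_{n\ge1}$ would be bounded, and the Banach--Steinhaus theorem would give $\sup_n\|\Lambda_n\|<\infty$, a contradiction. Replacing $n$ by $1/\varepsilon_n$ shows that no rate $\varepsilon_n\to0$ whatsoever holds for all bounded continuous $f$. In particular, your interpolation route cannot give $C\|f\|_\infty\lambda^{\theta n}$, which would be a uniform rate on the unit ball. The paper's one-line derivation from \eqref{eq:7} and \eqref{eq:8} only justifies the corollary for $f$ in $D(\Ell(\C))$, and your fallback of restricting to that class is the correct reading.

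Your second route does prove a natural intermediate statement: exponential convergence for bounded $f$ that are H\"older continuous for the hyperbolic metric. This requires confirming that the constant in \eqref{eq:8} for $K=\{z\}$ can be taken $\ll_z\|g\|_2+\|\Delta g\|_2\le\|g\|_\infty+\|\Delta g\|_\infty$. That follows from Cauchy--Schwarz on the expansion \eqref{eq:spectral_decom_f} weighted by the Laplace eigenvalues, plus a local Weyl law for the $\varphi_k$ and for $E_\infty(z,\frac12+it)$. Convolving with a smooth point-pair invariant kernel at scale $r$ produces $g$ in $D(\Ell(\C))$ with $\|f-g\|_\infty\le\omega_f(r)$ and $\|\Delta g\|_\infty\ll\|f\|_\infty r^{-2}$. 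The error is then $\ll\omega_f(r)+r^{-2}\on{\bT_\ell}^n$. With $r=\on{\bT_\ell}^{n/4}$ this is exponentially small as soon as $\omega_f(r)\le Cr^{c}$ for some $c>0$, and no cusp truncation is needed for such $f$. For a merely continuous $f$ the modulus of continuity $\omega_f$ can decay arbitrarily slowly. That is exactly where the argument breaks, consistent with the counterexample above.
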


To state the following corollary of Theorem~\ref{t:equid_complex}, put, for all~$n$ in~$\N$ and~$f$ and~$g$ in~$L^2(\Ell(\C),\mhyp)$,
\begin{equation}
  \label{eq:9}
  \sC_n(f, g)
  \=
  \int (\bT_\ell^n f) g \dd \mhyp - \int f \dd \mhyp \int g \dd \mhyp.
\end{equation}

\begin{coro}
  \label{c:equid_complex_bis}
  For every integer~$\ell$ satisfying ${\ell \ge 2}$, the correspondence~$\bT_\ell$ is exponentially mixing: For all~$f$ and~$g$ in~$L^2(\Ell(\C),\mhyp)$, the sequence of correlations~$(|\sC_n(f, g)|)_{n = 1}^\infty$ decays exponentially.
\end{coro}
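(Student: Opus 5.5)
Decompose $f$ and $g$ into their means plus a part of zero mean, and then use self-adjointness of $\bT_\ell$ together with the bound $\on{\bT_\ell}<1$ from Theorem~\ref{t:equid_complex}.

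Write $f = f_0 + \int f \dd \mhyp$ and $g = g_0 + \int g \dd\mhyp$ with $f_0, g_0 \in L^2_0(\Ell(\C),\mhyp)$. Since $\bT_\ell$ fixes constants and maps $L^2_0(\Ell(\C),\mhyp)$ into itself, we have
\begin{equation}
  \bT_\ell^n f = \bT_\ell^n f_0 + \int f \dd \mhyp,
  \qquad
  \int \bT_\ell^n f_0 \dd \mhyp = 0 .
\end{equation}
Substituting into~\eqref{eq:9} and expanding, the product of the means cancels with the subtracted term. The cross term $\int f \dd\mhyp \int g_0 \dd\mhyp$ vanishes, and so does $\int g\dd\mhyp\int \bT_\ell^n f_0 \dd\mhyp$. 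What remains is
\begin{equation}
  \sC_n(f,g) = \int (\bT_\ell^n f_0)\, g_0 \dd \mhyp .
\end{equation}
For real-valued $g$ this is the inner product $\langle \bT_\ell^n f_0, \overline{g_0}\rangle$. For complex $g$ we use the bilinear pairing instead, and $\overline{g_0}$ is still in $L^2_0$.

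By Cauchy--Schwarz,
\begin{equation}
  |\sC_n(f,g)| \le \|\bT_\ell^n f_0\|_2 \|g_0\|_2 .
\end{equation}
Since $L^2_0(\Ell(\C),\mhyp)$ is invariant under $\bT_\ell$, submultiplicativity of the operator norm gives
\begin{equation}
  \|\bT_\ell^n f_0\|_2 \le \on{\bT_\ell}^n \|f_0\|_2 .
\end{equation}
Moreover $\|f_0\|_2 \le \|f\|_2$, because $f_0$ is the orthogonal projection of $f$ onto the complement of the constants, and likewise $\|g_0\|_2\le \|g\|_2$. Therefore
\begin{equation}
  |\sC_n(f,g)| \le \|f\|_2\|g\|_2\, \on{\bT_\ell}^n,
\end{equation}
and this decays exponentially because $\on{\bT_\ell}<1$ by~\eqref{eq:7}.

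There is no real obstacle here. The only point to check is that $\bT_\ell$ preserves $L^2_0$, which the paper asserts: it follows from self-adjointness together with $\bT_\ell \mathbf 1 = \mathbf 1$. We also record that by~\eqref{eq:7} the rate can be taken to be $\varrho(\ell)\ell^{7/64}$ whenever this quantity is less than~$1$, and $\on{\bT_\ell}$ in general.
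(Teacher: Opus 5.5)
Your proof is correct and follows the paper's argument. You center $f$, use that $\bT_\ell$ fixes constants and preserves $L^2_0(\Ell(\C),\mhyp)$, then apply Cauchy--Schwarz and $\on{\bT_\ell}<1$ from~\eqref{eq:7} to get $|\sC_n(f,g)|\le \on{\bT_\ell}^n\|f\|_2\|g\|_2$. The only difference is that the paper does not center $g$: it pairs $\bT_\ell^n\bigl(f-\int f\dd\mhyp\bigr)$ directly against $g$ and bounds by $\|g\|_2$, so your expansion of the cross terms is unnecessary but harmless.
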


The inequality~$\on{\bT_\ell} \le \varrho(\ell) \ell^{\frac{7}{64}}$ in~\eqref{eq:7} provides an explicit version of \cite[Theorem~1.1]{COU} in the case where ${G = \SL_2}$ and ${\Gamma = \SL_2(\Z)}$.
The proof of this explicit inequality relies on a bound for Hecke eigenvalues of Maass cusp forms due to Kim and Sarnak \cite[Appendix~2]{Kim03}, and it is analogous to the proof of a similar estimate in the proof of \cite[\emph{Th{\'e}or{\`e}me}~2.1(a)]{CloUll04}.
This upper-bound of~$\on{\bT_\ell}$ does not imply ${\on{\bT_\ell} < 1}$ directly for ${\ell = 2}$ because ${\varrho(2) 2^{\frac{7}{64}} > 1}$.
A positive solution to the Ramanujan--Petersson conjecture would yield ${\on{\bT_\ell} = \varrho(\ell)}$, see, \emph{e.g.}, \cite[Section~8.5]{Iwa02}, which directly implies ${\on{\bT_\ell} < 1}$.

\subsubsection*{$p$-Adic setting}
Fix a prime number~$p$ and an algebraic closure~$\Fpalg$ of~$\Fp$.
Identify~$\Ell(\Cp)$ with~$\Cp$ via the $j$-invariant map and consider~$\Ell(\Cp)$ as a subspace of the Berkovich affine line~$\AKber$ over~$\Cp$.
Denote by~$\xcan$ the \emph{canonical} or \emph{Gauss point} of~$\AKber$.
The Dirac measure~$\delta_{\xcan}$ is relevant for Theorem~\ref{t:Equidistribucion} below.
See Section~\ref{s:Berkovich} for details.

Recall that an elliptic curve class~$E$ in~$\Ell(\Cp)$ can have \emph{bad}, \emph{ordinary}, or \emph{supersingular reduction}, thus defining a partition of the moduli space~$\Ell(\Cp)$ into three pairwise disjoint subsets: The \emph{bad}, \emph{ordinary}, and \emph{supersingular reduction loci}, denoted by $\Bad$, $\Ord$ and $\Sups$, respectively.
To each~$E$ in~$\Sups$, Section~\ref{ss:homogeneo} attaches a subgroup~$\NE$ of~$\Zp^\times$ containing $(\Zp^\times)^2$ and, to each coset~$\coset$ in~$\Zp^\times/\NE$, a probability measure~$\mu_{\coset}^E$.
This measure is a finite convex combination of projections of Haar measures on certain~$p$\nobreakdash-adic compact Lie groups attached to supersingular elliptic curves over~$\Fpalg$, see~\eqref{eq:def_mu_coset}.
The support of~$\mu_{\coset}^E$ is the closure in~$\Ell(\Cp)$ of the \emph{partial Hecke orbit}~$\corbit$, defined by
\begin{equation}
  \label{eq:10}
  \corbit
  \=
  \bigcup_{n \in \coset \cap \N} \supp(T_n(E)).
\end{equation}
For every pair of distinct cosets~$\coset$ and~$\coset'$ in~$\Zp^\times/\NE$, the supports of the measures~$\mu_{\coset}^E$ and~$\mu_{\coset'}^E$ are disjoint sets and, in particular, $\mu_{\coset}^E$ and~$\mu_{\coset'}^E$ are mutually singular.

\begin{theoalph}
  \label{t:Equidistribucion}
  Let~$p$ be a prime number and~$\ell$ an integer satisfying ${\ell \ge 2}$.
  Then, the following assertions hold.
  \begin{enumerate}
  \item[$(i)$]
    Let~$\cU$ be a neighborhood of~$\xcan$ in~$\AKber$ and~$E$ in~$\Ell(\Cp)$.
    Suppose~$E$ belongs to ${\Bad \cup \Ord}$ or ${p \mid \ell}$.
    Then, there exists~$C$ in~$\R_{> 0}$ such that, for every~$n$ in~$\N$,
    \begin{equation}
      \label{eq:con-tasa}
      \odelta_{T^n_\ell(E)}(\cU)
      \ge
      1 - C
      \begin{cases}
        \varrho(\ell)^n
        & \text{if } E \in \Bad \cup \Ord;
        \\
        \varrho(|\ell|_p^{-1})^n
        & \text{if } E \in \Sups.
      \end{cases}
    \end{equation}
  \item[$(ii)$]
    Let~$E$ be in~$\Sups$ and suppose ${p \nmid \ell}$.
    Then, for every continuous function ${f \: \Ell(\Cp) \to \R}$, the sequences~$(\bT_\ell^{2n}(f))_{n = 1}^\infty$ and~$(\bT_\ell^{2n + 1}(f))_{n = 1}^\infty$ converge uniformly on~$\OrbEc$ to the constant function equal to~$\int f \dd \muE$ and~$\int f \dd \mu_{\ell \NE}^E$, respectively.
  \end{enumerate}
\end{theoalph}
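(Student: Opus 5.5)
The plan is to reduce both parts to the local structure of $\Ell(\Cp)$ near $\xcan$ and near the supersingular residue discs, using the description of Hecke orbits via canonical subgroups and Serre--Tate/Lubin--Tate theory from \cite{HerMenRivI,HerMenRivII}. First I factor $\ell = p^{r} m$ with $p\nmid m$, so that $\varrho(\ell)=\varrho(p^r)\varrho(m)$ and $|\ell|_p^{-1}=p^r$. Since the Hecke correspondences commute and are multiplicative for coprime indices, $T_\ell^n = T_{p^r}^n T_m^n$. I then count, inside the divisor $T_\ell^n(E)$, the total mass of points lying outside $\cU$.

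For part $(i)$, I first treat $E\in\Bad\cup\Ord$. I would use the fact that a neighborhood $\cU$ of $\xcan$ contains the complement of finitely many residue discs of small radius around $\Bad$ and around the ordinary points. For an isogeny $E\to E/C$ with $E$ ordinary, the point $E/C$ stays close to $E$ (in the Serre--Tate sense) only when $C$ sits in a controlled position, namely the canonical subgroup or a subgroup of the étale part. The same holds for bad reduction via Tate curves, where $q$ is replaced by $q^{a/d}\zeta$. A cyclic subgroup of order $d$ of a Tate curve $\mathbb{G}_m/q^{\Z}$ contributes a point whose $q$-parameter has valuation scaled by a ratio $a/d$ with $ad=\ell$. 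Points stay far from $\xcan$ exactly when these valuations remain large, and this forces a "balanced" path. A generating-function count then shows that after $n$ steps the number of such points is at most $C\,\ell^{n/2} d(\ell)^n$ out of $\sigma(\ell)^n$. Dividing gives the bound $C\varrho(\ell)^n$. The count itself reduces to counting sequences in the lattice $\Z^2$ of $q$-valuations (or $\log$ of Serre--Tate coordinates): subgroups of order $\ell$ correspond to sublattices of index $\ell$, and the number of index-$\ell^n$ sublattices of bounded "shape" is $O(\ell^{n/2}d(\ell)^n)$.

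For $E\in\Sups$ with $p\mid\ell$, I separate the prime-to-$p$ part from the $p$-part. The prime-to-$p$ isogenies preserve the supersingular locus and contribute only the normalizing factor $\varrho(m)\le 1$, which I bound trivially, so no gain is needed from $m$. For the $p$-part, I use the canonical subgroup theory on the supersingular discs. A $p$-isogeny from a supersingular point moves toward the boundary of the disc, and hence toward $\xcan$, except for the unique quasi-canonical direction. Iterating, the number of degree-$p^{rn}$ descendants still in a fixed small neighborhood of $\Sups$ is bounded by $C p^{rn/2}d(p^r)^n$. This yields the rate $\varrho(p^r)^n=\varrho(|\ell|_p^{-1})^n$.

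For part $(ii)$, with $p\nmid\ell$ and $E$ supersingular, every point of $\supp T_\ell^n(E)$ stays in $\Sups$, and the orbit closure $\OrbEc$ is described as a quotient of Haar-homogeneous spaces of groups $\Gss$ (units of the quaternion order acting via Lubin--Tate deformation). A continuous $f$ restricted to the compact set $\OrbEc$ is a uniform limit of locally constant functions, so by uniform approximation it suffices to treat locally constant $f$. Such an $f$ factors through a finite quotient, namely a finite level of the $p$-adic group. On that finite set, $\bT_\ell$ acts as the adjacency operator of a Brandt-type graph, and the spectral gap result (Theorem~\ref{t:hecke-dynamics_sups}$(i)$, whose proof rests on the Ramanujan/Deligne bounds for quaternionic forms) shows that the only unimodular eigenvalues come from characters of $\Zp^\times/\NE$ evaluated at $\ell$. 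Since $\NE\supseteq(\Zp^\times)^2$, these eigenvalues are $\pm1$, which gives period two and the limits $\muE$ and $\mu^E_{\ell\NE}$ along even and odd times.

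The main obstacle I expect is the sharp counting in part $(i)$. I would first try to do it explicitly via sublattice counting in the Tate/Serre--Tate coordinates, handling the multiplicativity in $\ell$ through the Hecke relations $T_{\ell}T_{\ell'}$ for coprime indices.
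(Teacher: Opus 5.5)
\noindent There is a genuine gap in part~$(i)$ for $E\in\Ord$. Since $\cU\supseteq\bfB$, what must be controlled is the mass of $T_\ell^n(E)$ in the small discs $\{|z-a|_p\le R^{-1}\}$ around finitely many ordinary points $a\in A$. Your mechanism does not reach this for the prime-to-$p$ part of~$\ell$.

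\begin{itemize}
\item Every subgroup of order prime to~$p$ is \'etale, and $E/C$ generally lies in a different residue disc from~$E$, namely the disc of $\tilde E/\tilde C$.
\item Serre--Tate coordinates only parametrize deformations inside a fixed residue disc, so they say nothing about which disc is hit.
\item Hence ``$E/C$ stays close to $E$ only for the canonical or \'etale subgroup'' is not the relevant criterion: the targets are the points of~$A$, not~$E$. Landing near a given ordinary~$a$ is also not a ``bounded shape'' condition on a sublattice.
\end{itemize}

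What is needed is arithmetic input. The mass of $T_N(E)$ in the disc of the reduction $e_a$ is controlled by $\#\Hom_N(e,e_a)$, and one needs $\#\Hom_N(e,e_a)\le C\,d(N)$. The paper obtains this by counting elements of norm~$N$ in the imaginary quadratic order $\End(e)$ (Lemma~\ref{l:ordinary}). Together with \cite[Proposition~5.3]{HerMenRivI} for the $p$-power part, this gives the one-step bound $\deg(T_N(E)|_{\Cp\ssetminus\bfB})\le C\,d(N)$ of Proposition~\ref{p:paperI}. Nothing in your plan supplies this.

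Even granting one-step bounds, you do not say how to get the sharp $n$-step count, and the sketches you give are off.
\begin{itemize}
\item The divisor $T_\ell^n(E)$ counts chains of subgroups with multiplicity, not index-$\ell^n$ sublattices. The sublattices of bounded shape number only $O(\ell^{n/2}d(\ell^n))$, so that is the wrong quantity.
\item Chains can leave the region outside~$\cU$ and return through a dual isogeny, and these returns are exactly what produce the rate~$\varrho$. If ``iterating'' in your supersingular step means propagating a bound along paths that stay deep, it misses them and does not bound the mass at time~$n$.
\end{itemize}

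The paper avoids tracking orbits. By \eqref{eq:coprime} and \eqref{id}, $T_\ell^n(E)$ is a nonnegative combination, with coefficients $a_{m_1,n}(j_1)\cdots a_{m_r,n}(j_r)\,q_1^{j_1}\cdots q_r^{j_r}$, of single images $T_N(E)$ of the \emph{starting point}. So one-step bounds at~$E$, uniform in~$N$, suffice. Identity \eqref{suma total} turns $\sum_j a_{m,n}(j)q^j\sqrt{q^{mn-2j}}\,d(q^{mn-2j})$ into exactly $q^{mn/2}(m+1)^n$. In the supersingular case the one-step bound is $C\sigma(N|N|_p)$, and the inequality $\sum_j a_{m,n}(j)p^j\le p^{mn/2}(m+1)^n$ handles the $p$-part.

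For bad reduction, your Tate-curve count can be made rigorous directly, and it gives an alternative that avoids the Hecke identity. Along a chain, the valuation of the Tate parameter is multiplied by $\prod_i a_i/d_i=\ell^n/D^2$, where $D=d_1\cdots d_n$. Each tuple $(d_i)$ carries $D$ choices of $(b_i)$. So the mass with $|j|_p\ge R$ is at most $c^{-1/2}\ell^{n/2}d(\ell)^n$, with $c=\log R/\log|j(E)|_p$.

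Part~$(ii)$ follows the paper's route. One correction: Theorem~\ref{t:hecke-dynamics_sups}$(i)$ is deduced there from the equidistribution of partial Hecke orbits (Theorem~\ref{t:equid_partial_hecke}) via Proposition~\ref{p:criterion_convergence_iterates}, not from Ramanujan--Deligne bounds.
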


The following corollary is a direct consequence of Theorem~\ref{t:Equidistribucion} and \cite[Proposition~6.9$(ii)$]{HerMenRivII}.

\begin{coro}
  \label{c:Equidistribucion}
  Let~$p$ be a prime number, $\ell$ an integer satisfying ${\ell \ge 2}$, and~$E$ in~$\Ell(\Cp)$.
  If~$E$ belongs to~$\Bad \cup \Ord$ or ${p \mid \ell}$, then the following weak convergence of measures holds,
  \begin{equation}
    \label{eq:sin-tasa}
    \odelta_{T_\ell^n(E)} \rightarrow \delta_{\xcan} \text{ as } n \rightarrow \infty.
  \end{equation}
  If~$E$ belongs to~$\Sups$ and ${p \nmid \ell}$, then the following weak convergence of measures hold,
  \begin{equation}
    \label{eq:sin-tasa_bis}
    \odelta_{T_\ell^{2n}(E)} \to \muE
    \text{ and }
    \odelta_{T_\ell^{2n + 1}(E)} \to \mu_{\ell \NE}^E
    \text{ as }
    n \to \infty.
  \end{equation}
  Moreover, the sequence of measures~$(\odelta_{T_\ell^n(E)})_{n = 0}^\infty$ converges weakly if and only if~$\ell$ belongs to~$\NE$.
\end{coro}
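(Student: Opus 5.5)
We derive Corollary~\ref{c:Equidistribucion} from Theorem~\ref{t:Equidistribucion}, together with \cite[Proposition~6.9$(ii)$]{HerMenRivII} for the final assertion. Throughout we use that, by~\eqref{eq:6}, for every continuous~$f$ the identity $\int f \dd \odelta_{T_\ell^n(E)} = \bT_\ell^n f(E)$ holds, since $\bT_\ell^n = \sigma(\ell)^{-n} T_\ell^n$ and $\deg T_\ell^n(E) = \sigma(\ell)^n$.

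\emph{Case $E \in \Bad \cup \Ord$ or $p \mid \ell$.} Note first that $\varrho(\ell)<1$ and, when $p \mid \ell$, also $\varrho(|\ell|_p^{-1})<1$, because $|\ell|_p^{-1}\ge p \ge 2$. Hence Theorem~\ref{t:Equidistribucion}$(i)$ gives $\odelta_{T^n_\ell(E)}(\cU) \to 1$ for every neighborhood~$\cU$ of~$\xcan$ in~$\AKber$. We regard the measures $\odelta_{T^n_\ell(E)}$ as Radon probability measures on the locally compact space~$\AKber$. Let $\varphi$ be bounded and continuous on~$\AKber$ and let $\varepsilon>0$. Choose a neighborhood~$\cU$ of~$\xcan$ on which $|\varphi - \varphi(\xcan)|<\varepsilon$. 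Then
\[
\left|\int \varphi \dd\odelta_{T^n_\ell(E)} - \varphi(\xcan)\right| \le \varepsilon + 2\|\varphi\|_\infty\,\odelta_{T^n_\ell(E)}(\AKber\ssetminus\cU),
\]
and the right-hand side tends to~$\varepsilon$. Since $\varepsilon$ was arbitrary, this proves~\eqref{eq:sin-tasa}.

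\emph{Case $E\in\Sups$ and $p\nmid\ell$.} We have $E\in\OrbEc$, because $1\in\NE$ and $T_1(E)=E$. Evaluating the uniform convergence of Theorem~\ref{t:Equidistribucion}$(ii)$ at the point~$E$ gives $\int f \dd\odelta_{T_\ell^{2n}(E)} = \bT_\ell^{2n}f(E) \to \int f\dd\muE$, and similarly for the odd iterates with limit $\mu^E_{\ell\NE}$. This holds for every continuous~$f$ on~$\Ell(\Cp)$, and in particular for every bounded continuous~$f$, which is~\eqref{eq:sin-tasa_bis}.

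\emph{Final assertion.} In the first case the sequence converges weakly by~\eqref{eq:sin-tasa}. Here $\NE$ is only defined for supersingular~$E$, so this is the part that requires interpretation. I would read it as concerning the supersingular case with $p\nmid\ell$; if $p\mid\ell$, then $\ell\notin\Zp^\times$, so $\ell \notin \NE$ in any case, and this seems to be exactly where \cite[Proposition~6.9$(ii)$]{HerMenRivII} is needed to settle the convention. In the case $E\in\Sups$ and $p\nmid\ell$, the full sequence converges if and only if $\muE=\mu^E_{\ell\NE}$.
\begin{itemize}
\item If $\ell\in\NE$, the cosets $\NE$ and $\ell\NE$ coincide, so the two measures are equal and the sequence converges.
\item If $\ell\notin\NE$, the cosets are distinct, so the measures have disjoint supports; this is stated before Theorem~\ref{t:Equidistribucion}, or is the content of \cite[Proposition~6.9$(ii)$]{HerMenRivII}. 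Both are probability measures, so they are different, and the sequence has two distinct accumulation points and does not converge.
\end{itemize}

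The main obstacle is this bookkeeping in the final assertion: identifying which cited fact separates $\muE$ from $\mu^E_{\ell\NE}$, and fixing the convention for $\NE$ outside the case $E\in\Sups$, $p\nmid\ell$. The remaining steps are direct.
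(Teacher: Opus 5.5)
Your proposal is correct and follows the route the paper indicates. Item $(i)$ of Theorem~\ref{t:Equidistribucion}, together with $\varrho(\ell)<1$ and $\varrho(|\ell|_p^{-1})<1$, gives~\eqref{eq:sin-tasa}; evaluating item $(ii)$ at $E\in\OrbEc$ gives~\eqref{eq:sin-tasa_bis}; and the final equivalence, which you rightly read in the case $E\in\Sups$, $p\nmid\ell$, follows from $\ell\NE=\NE$ when $\ell\in\NE$ and from the disjoint supports of $\mu^E_{\coset}$ for distinct cosets otherwise. One correction: \cite[Proposition~6.9$(ii)$]{HerMenRivII} is exactly the source of that disjointness, not a convention for the case $p\mid\ell$.
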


Theorem~\ref{t:canonical_rate} shows that, for a given neighborhood~$\cU$ of~$\xcan$ in~$\AKber$, the constant~$C$ in Theorem~\ref{t:Equidistribucion}$(i)$ depends uniformly on~$E$.
Its proof relies on the results of \cite{HerMenRivI} describing all Hecke orbits converging towards the Gauss point in~$\AKber$.
Moreover, Proposition~\ref{p:canonical_sharpness} proves that the exponential convergence rate in Theorem~\ref{t:Equidistribucion}$(i)$ is sharp.

In the case where~$E$ belongs to~$\Sups$ and ${p \nmid \ell}$, the behavior of~$T_\ell$ shown in Theorem~\ref{t:Equidistribucion}$(ii)$ and~\eqref{eq:sin-tasa_bis} in Corollary~\ref{c:Equidistribucion} is reminiscent of unique ergodicity in dynamical systems.
As in the proof of Theorem~\ref{t:equid_complex}, a key step in the proof of these results is to establish a spectral gap property for the action of~$\bT_\ell$ on certain spaces of locally constant functions.
This result is stated as Theorem~\ref{t:hecke-dynamics_sups}$(i)$ and derived from the results of \cite{HerMenRivII}.

\subsection{Central limit theorems}
\label{ss:TLC}
Fix a prime number~$p$.
For~$\K=\Cp$, a starting point~$E$ in~$\Ell(\Cp)$, and an integer~$\ell$ satisfying ${\ell \ge 2}$, the following result is a central limit theorem for the forward orbit~$(T_\ell^n(E))_{n = 0}^\infty$ refining Theorem~\ref{t:Equidistribucion} and Corollary~\ref{c:Equidistribucion}, and complementing the analogous results of Cantat and Le~Borgne in~\cite{CanlBo05} for ${\K=\C}$.
When~$E$ belongs to ${\Ord \cup \Bad}$ or ${p \mid \ell}$, the limit measure in Theorem~\ref{t:Equidistribucion}$(i)$ and in~\eqref{eq:sin-tasa} in Corollary~\ref{c:Equidistribucion} is a Dirac measure, and the corresponding central limit theorem is trivial.
Thus, this section restricts to the case where~$E$ belongs to~$\Sups$ and ${p \nmid \ell}$.
It provides a quenched version of the central limit theorem refining the uniform convergence in Theorem~\ref{t:Equidistribucion}$(ii)$, which concerns a Markov process starting at the (arbitrary) point~$E$ of~$\Sups$.
Theorem~\ref{t:TLC} below treats the simpler case where~$\ell$ belongs to~$\NE$.
The annealed, or averaged, version of the central limit theorem, concerning a Markov process with respect to its stationary limit measure, follows as a direct consequence, see Corollary~\ref{c:TLC} below.
For~$\ell$ outside~$\NE$, the quenched and annealed versions of the central limit theorem are treated in Remark~\ref{rmk:case_ell_not_in_NE}.
The precise statements of these results rely on the following notation.

Given~$n$ in~$\N$, the group $Z_0(\Ell(\K)^n)$ of \emph{$0$-dimensional cycles} on $\Ell(\K)^n$ is the free Abelian group generated by the points of~$\Ell(\K)^n$.
So, ${Z_0(\Ell(\K))=\Div(\Ell(\K))}$.
The degree of an element~$\fO$ of~$Z_0(\Ell(\K)^n)$, given by
\begin{equation}
  \label{eq:0-cycle}
  \fO
  =
  \sum_{\uE \in \Ell(\K)^n} m_{\uE}(\uE),
\end{equation}
is the integer defined as
\begin{equation}
  \label{eq:11}
  \deg(\fO)
  \=
  \sum_{\uE \in \Ell(\K)^n} m_{\uE}.
\end{equation}
When ${\deg(\fO) \ge 1}$ and, for every~$\uE$ in~$\Ell(\K)^n$, the inequality ${m_{\uE} \ge 0}$ holds, define the Borel probability measure~$\odelta_{\fO}$ on~$\Ell(\K)^n$ by
\begin{equation}
  \label{eq:12}
  \odelta_{\fO}
  \=
  \frac{1}{\deg(\fO)} \sum_{\uE \in \Ell(\K)^n}m_{\uE} \delta_{\uE}.
\end{equation}

For all~$\ell$ in~$\N$, $n$ in~$\Nz$, and~$E$ in~$\Ell(\K)$, define the element~$\fO_{T_\ell}^n(E)$ of~$Z_0(\Ell(\K)^{n + 1})$ as follows.
If ${n = 0}$, then ${\fO_{T_\ell}^0(E) \= E}$.
If ${n \ge 1}$, then let~$I_{n,\ell}(E)$ denote the collection of all $n$-tuples~$(C_0, C_1, \ldots, C_{n - 1})$ for which there is~$(E_0,E_1,\ldots,E_n)$ in~$\Ell(\K)^{n + 1}$ such that ${E_0 = E}$ and such that, for each~$i$ in~$\{0, \ldots, n\}$, the set~$C_i$ is a subgroup of order~$\ell$ of~$E_i$, and, if ${i \neq n}$, then ${E_{i + 1} = E_i/C_i}$.
The point~$(E_0,E_1,\ldots,E_n)$ of~$Z_0(\Ell(\K)^{n + 1})$ is uniquely determined by~$(C_0,C_1,\ldots,C_n)$.
Denote it by~$\uE(C_0,C_1,\ldots,C_n)$.
Then,
\begin{equation}
  \label{eq:O_T_ell(E)^n_def}
  \fO_{T_\ell}^n(E)
  \=
  \sum_{\uC \in I_{n,\ell}(E)} \uE(\uC).
\end{equation}
The equality~$\deg(\fO_{T_\ell}^{n}(E)) = \sigma(\ell)^n$ holds.

Let~$\ell$ be in~$\N$ and recall from Section~\ref{ss:Equidistribucion} that ${\bT_\ell \= \frac{1}{\sigma(\ell)}T_\ell}$.
For every~$n$ in~$\N$ and ${f \: \Ell(\K) \to \R}$, define ${S_n(f) \: \Ell(\K)^n \to \R}$ as
\begin{equation}
  \label{eq:S_n(f)}
  S_n(f)(E_0,\ldots,E_{n - 1})
  \=
  \sum_{i=0}^{n - 1} f(E_i).
\end{equation}
Then, for every~$E$ in~$\Ell(\K)$,
\begin{equation}
  \label{eq:13}
  (\bT_\ell^n f)(E)
  =
  \int f \dd \odelta_{T_\ell^n(E)}
  \text{ and }
  \int S_n(f) \dd \odelta_{\fO_{T_\ell}^{n - 1}(E)}
  =
  \int f \dd \left( \sum_{k=0}^{n - 1} \odelta_{T_\ell^k(E)} \right).
\end{equation}

Theorem~\ref{t:Equidistribucion}$(ii)$ combined with the second equation in~\eqref{eq:13} yields the following statement, which is analogous to the law of large numbers for an arbitrary starting point in~$\Sups$: For all~$E$ in~$\Sups$, integer~$\ell$ in~$\NE$ satisfying ${\ell \ge 2}$, and continuous function ${f \: \Ell(\K) \to \R}$,
\begin{equation}
  \label{eq:14}
  \int \frac{1}{n} S_n(f) \dd \odelta_{\fO_{T_\ell}^{n - 1}(E)} \to \int f \dd \mu^E_{\NE} \text{ as } n \to \infty.
\end{equation}
The following result establishes the corresponding quenched central limit theorem.
For each~$\sigma$ in~$\Rp$, denote by~$\cN_{0,\sigma^2}$ the \emph{centered Gauss normal distribution with variance $\sigma^2$}, meaning the unique Borel probability measure on~$\R$ satisfying
\begin{equation}
  \label{eq:Gauss_dist}
  \cN_{0,\sigma^2}(I)
  =
  \frac{1}{\sqrt{2\pi \sigma^2}} \int_I \exp \left( -\frac{t^2}{2\sigma^2} \right) \dd t \text{ for every interval~$I$ of~$\R$}.
\end{equation}

\begin{theoalph}[Quenched Central Limit Theorem]
  \label{t:TLC}
  Let~$p$ be a prime number, $E$ in~$\Sups$, $\ell$ an integer in~$\NE$ satisfying ${\ell \ge 2}$, and ${f \: \Ell(\Cp) \to \R}$ a function with~$\int f \dd \muE=0$ and whose restriction to~$\OrbEc$ is locally constant.
  Then,
  \begin{equation}
    \label{eq:sigma_f_p-adic}
    \int f^2 \dd \mu^E_{\NE} + 2 \sum_{k=1}^\infty \int f \cdot \bT_\ell^k f \dd \mu^E_{\NE}
  \end{equation}
  converges to a nonnegative real number.
  Denote its square root by~$\sigma_f$.
  If ${\sigma_f > 0}$, then the central limit theorem holds with variance~$\sigma_f^2$ for~$f$ at each starting point in~$\OrbEc$.
  That is, for all~$E'$ in~$\OrbEc$ and interval~$I$ of~$\R$,
  \begin{equation}
    \label{eq:15}
    \odelta_{\fO_{T_\ell}^{n - 1}(E')} \left( \left\{ \uE \in \Ell(\Cp)^n \: \frac{1}{\sqrt{n}}S_n(f)(\uE) \in I \right\} \right) \to \cN_{0,\sigma_f^2}(I) \text{ as } n \to \infty.
  \end{equation}
  Moreover, for fixed~$I$ the convergence is uniform on~$E'$ in~$\OrbEc$.
  If ${\sigma_f = 0}$, then the same properties hold with~$\cN_{0,\sigma_f^2}$ replaced~$\delta_0$.
\end{theoalph}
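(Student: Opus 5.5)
The plan is to reduce the statement to a standard central limit theorem for Markov chains whose transition operator has a spectral gap on a suitable finite-dimensional (or Banach) space. We then handle the quenched, non-stationary starting point with the Nagaev--Guivarc'h perturbation method.

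\textbf{Markov chain model.} The measure $\odelta_{\fO_{T_\ell}^{n-1}(E')}$ is the law of the first $n$ steps of the Markov chain on $\OrbEc$ with transition kernel $E \mapsto \odelta_{T_\ell(E)}$, started at $E'$. Its transition operator is $\bT_\ell$. Since $\ell \in \NE$, the set $\OrbEc$ is mapped into itself. By Theorem~\ref{t:Equidistribucion}$(ii)$ and \eqref{eq:sin-tasa_bis}, with $\ell\NE=\NE$, the measure $\muE$ is its unique stationary measure on $\OrbEc$.

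\textbf{Spectral gap on a finite-dimensional space.} Fix $f$ locally constant on the compact set $\OrbEc$. Then $f$ is constant on the fibres of a finite partition of $\OrbEc$ coming from a suitable level in the description via the $p$-adic Lie groups $\Gss$. I would use Theorem~\ref{t:hecke-dynamics_sups}$(i)$ to obtain a finite-dimensional space $V$ of locally constant functions on $\OrbEc$ with the following properties:
\begin{itemize}
\item $V$ contains $f$ and the constants;
\item $V$ is stable under $\bT_\ell$ and under multiplication by $e^{itf}$ (take $V$ to be all functions constant on the atoms of a fine enough invariant partition);
\item on $V$ one has $\bT_\ell=\Pi+R$, with $\Pi g=\int g \dd\muE$ and $\|R^n\|\le C\theta^n$ for some $\theta<1$.
\end{itemize}
The absence of the eigenvalue $-1$ here is exactly where $\ell\in\NE$ is used: the parity phenomenon only arises for $\ell\notin\NE$. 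With this in hand, $\sum_k \int f\,\bT_\ell^k f\dd\muE$ converges absolutely, because $\int f\dd\muE=0$ gives $\bT_\ell^k f=R^k f$. Writing $g=\sum_{k\ge0}\bT_\ell^k f$, which solves the Poisson equation $g-\bT_\ell g=f$, the expression \eqref{eq:sigma_f_p-adic} equals $\int (g^2-(\bT_\ell g)^2)\dd\muE\ge0$. Hence the limit is a nonnegative real number $\sigma_f^2$.

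\textbf{CLT via characteristic functions.} Define the twisted operators $P_t h=\bT_\ell(e^{itf}h)$ acting on $V$. Then
\[
\int e^{it S_n(f)}\dd\odelta_{\fO^{n-1}_{T_\ell}(E')}=\big(P_t^{n}\mathbf 1\big)'(E'),
\]
up to the harmless placement of the last factor, evaluated as $e^{itf}P_t^{n-1}\mathbf 1$. Since $V$ is finite-dimensional and $t\mapsto P_t$ is analytic, perturbation theory gives a leading eigenvalue $\lambda(t)=1-\tfrac{\sigma_f^2}{2}t^2+o(t^2)$ for small $t$. The coefficient is computed by the standard second-order expansion using $\int f\dd\muE=0$ and the Poisson solution $g$. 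The remaining part of $P_t$ has spectral radius bounded by some $\theta'<1$. Substituting $t/\sqrt n$ yields
\[
\big(P_{t/\sqrt n}^{n}\mathbf 1\big)(E')\to e^{-\sigma_f^2t^2/2},
\]
uniformly in $E'$, since everything is controlled in the sup norm on $V$ over $\OrbEc$. Lévy's continuity theorem then gives \eqref{eq:15}. If $\sigma_f=0$, the limit is $\delta_0$ and the convergence holds on every interval. If $\sigma_f>0$, the Gaussian limit has no atoms, so convergence of distribution functions is uniform in $E'$; for $\sigma_f=0$ one argues directly from the second moment bound $\int S_n(f)^2=O(1)$ with $g$.

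\textbf{Main obstacle.} The hard step is the finite-dimensional invariant space $V$ with a spectral gap uniform on all of $\OrbEc$, not just $\muE$-a.e. This uses that every point of $\OrbEc$ lies over the same finite homogeneous structure, and it is exactly what Theorem~\ref{t:hecke-dynamics_sups}$(i)$ is designed to provide. I would check carefully that multiplication by $e^{itf}$ preserves $V$.
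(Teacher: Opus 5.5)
Your proof is correct, but it takes a different route from the paper.

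\textbf{The paper's route.} It never perturbs $\bT_\ell$. The spectral gap $\|U_{E,\ell}\|_{\delta,E,0}<1$ is used only to make $\sum_k \bT_\ell^k f$ converge in $L^2(\muE)$. The paper then cites the Derriennic--Lin quenched CLT (Theorem~\ref{t:CLT_Markov}), which gives \eqref{eq:15} only for $\muE$-almost every $E'$. The upgrade to every $E'\in\OrbEc$, with uniformity, comes from Lemma~\ref{l:G_n_in_dconst}. For fixed $n$ and $I$, the function $E'\mapsto\odelta_{\fO^{n-1}_{T_\ell}(E')}(\{\cdots\in I\})$ lies in $\dconst$, proved via the recursion $G_{n+1,I}=\bT_\ell(G_{n,J})$ with $J$ depending on the local value of $f$. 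So it is constant on finitely many cells, each of positive $\muE$-mass since $\supp\muE=\OrbEc$, and almost-everywhere convergence becomes everywhere and uniform.

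\textbf{Your route.} You work on the complexification of $\dconst$ instead. This space is stable under $\bT_\ell$ (Lemma~\ref{l:Tell_on_dconst}) and under multiplication by $e^{itf}$, since $f\in\dconst$ and products of functions constant on the same cells stay constant on them. That stability plays exactly the role of Lemma~\ref{l:G_n_in_dconst}. The payoff is that every starting point and uniformity come directly from the sup-norm bound \eqref{eq:96} on a finite-dimensional space, and the method would also yield rates such as Berry--Esseen. The cost is doing the second-order eigenvalue expansion yourself instead of citing \cite{DL03}.

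\textbf{The case $\sigma_f=0$.} The bound $\int S_n(f)^2=O(1)$ only gives weak convergence to $\delta_0$. That does not control intervals with $0$ as an endpoint, whereas the statement asks for every interval. The paper omits this case, and Theorem~\ref{t:CLT_Markov} likewise only gives weak convergence. The gap is easy to close here:
\begin{itemize}
\item With $g=\sum_{k\ge0}\bT_\ell^k f\in\dconst$ and invariance of $\muE$, one has $\sigma_f^2=\int(\bT_\ell(g^2)-(\bT_\ell g)^2)\dd\muE$.
\item The integrand is a nonnegative element of $\dconst$. If its integral vanishes, it vanishes on all of $\OrbEc$, so $g$ is constant on $\supp(T_\ell(x))$ for every $x\in\OrbEc$.
\item By duality of isogenies, $x\in\supp(T_\ell(y))$ whenever $y\in\supp(T_\ell(x))$. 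Hence $\bT_\ell g(y)=g(x)$ for such $y$, and averaging gives $\bT_\ell^2 g=g$.
\item The spectral gap then forces $g$ to be constant, so $f=g-\bT_\ell g$ vanishes on $\OrbEc$.
\end{itemize}
Thus the measures in \eqref{eq:15} are exactly $\delta_0$, and convergence holds for every interval.
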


Since~$\mu^E_{\NE}$ is a probability measure, the following corollary is a direct consequence of the uniform convergence in Theorem~\ref{t:TLC}.

\begin{coro}[Annealed Central Limit Theorem]
  \label{c:TLC}
  Let~$p$, $E$, ${\ell}$, ${f}$, and~$\sigma_f$ be as in Theorem~\ref{t:TLC}.
  If ${\sigma_f > 0}$, then the central limit theorem with variance $\sigma_f^2$ and initial distribution $\mu^E_{\NE}$ holds for~$f$.
  This is, for every interval~$I$ of~$\R$,
  \begin{equation}
    \label{eq:16}
    \int \odelta_{\fO_{T_\ell}^{n - 1}(E')} \left( \left\{ \uE \in \Ell(\Cp)^n \: \frac{1}{\sqrt{n}}S_n(f)(\uE) \in I \right\} \right) \dd \mu^E_{\NE}(E') \to \cN_{0,\sigma_f^2}(I) \text{ as } n \to \infty.
  \end{equation}
  If ${\sigma_f = 0}$, then the same properties hold with~$\cN_{0,\sigma_f^2}$ replaced by~$\delta_0$.
\end{coro}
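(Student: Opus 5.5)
The annealed statement follows from Theorem~\ref{t:TLC} by integrating over the starting point. For each~$n$ in~$\N$ and each interval~$I$ of~$\R$, define the function
\begin{equation}
  F_n(E') \= \odelta_{\fO_{T_\ell}^{n - 1}(E')} \left( \left\{ \uE \in \Ell(\Cp)^n \: \tfrac{1}{\sqrt{n}}S_n(f)(\uE) \in I \right\} \right)
\end{equation}
on~$\OrbEc$. The quantity in~\eqref{eq:16} is then~$\int F_n \dd \muE$.

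First I will check that this integral makes sense. The support of~$\muE$ is~$\OrbEc$, as recalled before Theorem~\ref{t:Equidistribucion}, so only the values of~$F_n$ on~$\OrbEc$ matter. Measurability of~$F_n$ is needed. Since~$f$ is locally constant on the compact set~$\OrbEc$, the function~$f$ takes finitely many values there. The points of~$\fO_{T_\ell}^{n-1}(E')$ depend locally constantly on~$E'$, because Hecke correspondences act by local isometries on the supersingular locus, as established in~\cite{HerMenRivII}. Hence~$F_n$ is locally constant on~$\OrbEc$, and in particular Borel measurable. The bound~$0 \le F_n \le 1$ is immediate.

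Next, assume~$\sigma_f > 0$. Theorem~\ref{t:TLC} gives
\begin{equation}
  \sup_{E' \in \OrbEc} \left| F_n(E') - \cN_{0,\sigma_f^2}(I) \right| \to 0 \text{ as } n \to \infty.
\end{equation}
Since~$\muE$ is a probability measure supported on~$\OrbEc$, it follows that
\begin{equation}
  \left| \int F_n \dd \muE - \cN_{0,\sigma_f^2}(I) \right| \le \sup_{\OrbEc} \left| F_n - \cN_{0,\sigma_f^2}(I) \right|,
\end{equation}
which tends to zero. This proves~\eqref{eq:16}. The uniformity in Theorem~\ref{t:TLC} makes this step trivial; pointwise convergence would also suffice, by dominated convergence with the bound~$0 \le F_n \le 1$.

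The case~$\sigma_f = 0$ is identical, with~$\cN_{0,\sigma_f^2}(I)$ replaced by~$\delta_0(I)$. The only step requiring care is the measurability of~$F_n$. Even there, dominated convergence needs measurability, whereas the uniform bound above only uses that~$F_n$ is bounded and measurable.
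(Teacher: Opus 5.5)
Your argument is correct and is the paper's own: the paper states that the corollary follows directly from the uniform convergence on $\OrbEc = \supp(\muE)$ in Theorem~\ref{t:TLC}, integrated against the probability measure $\muE$. One minor correction to your measurability remark: the points of $\fO_{T_\ell}^{n-1}(E')$ do not depend locally constantly on $E'$, since they move as $E'$ moves, but isometrically in the deformation spaces. The right justification that $E' \mapsto \odelta_{\fO_{T_\ell}^{n-1}(E')}(\cdots)$ is locally constant on $\OrbEc$ is Lemma~\ref{l:G_n_in_dconst}, which places this function in $\dconst$.
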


See Remark~\ref{rmk:case_ell_not_in_NE} for versions of Theorem~\ref{t:TLC} and Corollary~\ref{c:TLC} in the case where~$\ell$ is outside~$\NE$ and ${p \nmid \ell}$.

The proof of Theorem~\ref{t:TLC} relies on the spectral gap property given by Theorem~\ref{t:hecke-dynamics_sups}$(i)$ and the central limit theorem for Markov chains established by Derriennic and Lin in~\cite{DL03}.

Using work of Clozel, Oh, and Ullmo \cite{COU}, Cantat and Le~Borgne adapted Gordin's method to establish analogous results for ${\K = \C}$.
See \cite[\emph{Th{\'e}or{\`e}me}~4.3]{CanlBo05} for a statement analogous to Corollary~\ref{c:TLC} for the stationary measure~$\mu_{\hyp}$, and \cite[\emph{Remarque~4.4(2)}]{CanlBo05} for the convergence~\eqref{eq:15} in Theorem~\ref{t:TLC} for almost every~$E'$ with respect to~$\mu_{\hyp}$.
Theorem~\ref{t:TLC} establishes the convergence~\eqref{eq:15} for every~$E'$ in~$\OrbEc$, and not only for~$\mu^E_{\NE}$-almost every~$E'$.
This is because the Hecke correspondence~$T_\ell$ on~$\Sups$ is given locally by isometries (see Lemmas~\ref{l:action_isometries} and~\ref{l:formal_Hecke_formula}).

\subsection{Random walks and stationary measures}
\label{ss:random_walks}
Fix a prime number~$p$ and an integer~$\ell$ satisfying ${\ell \ge 2}$.
For each~$E$ in~$\Ell(\Cp)$, the orbit~$\bigcup_{n=0}^\infty \supp(T_\ell^n(E))$ of~$E$ by~$T_\ell$ carries a structure of directed graph that provides a natural state space for random walks.
Goren and Kassaei used this approach in \cite{GorKas} to analyze the dynamics of~$T_\ell$, assuming that~$\ell$ is a prime number different from~$p$ and~$E$ belongs to the good reduction locus ${\Ell(\Qpalg) \ssetminus \Ell_{\bad}(\Qpalg)}$.
For every~$N$ in~$\N$ coprime to~$p\ell$, \cite{GorKas} includes the modular curve~$\Ell_1(N)$.
This section restricts to ${N = 1}$ for comparison.
When~$E$ belongs to~$\Ell_{\sups}(\Qpalg)$, \cite{GorKas} uses the Gross--Hopkins period map to relate the problem to the study of random walks on~$\P^1(\Qpalg)$, and applies the work of Benoist and Quint~\cite{BQ14}.
Theorem~\ref{t:random_walks}, below, completes the picture emerging from~\cite{GorKas}.
A brief review of the necessary notation and terminology follows.
See Section~\ref{ss:p-adic-case} for details.

Denote by~$\Q_{p^2}$ the unique unramified quadratic extension of~$\Qp$ inside~$\Cp$, $\Z_{p^2}$ its ring of integers, and~$M_2(\Z_{p^2})$ the ring of 2$\times$2\nobreakdash-matrices with coefficients in~$\Z_{p^2}$.
Fix a supersingular elliptic curve~$\ss$ over~$\Fpalg$, denote its formal group by~$\Fss$ and by~$\Dss$ the set of classes~$E$ in~$\Ell(\Cp)$ reducing to~$\ss$, and put ${\Gss \= \Aut_{\Fpalg}(\Fss)}$.
The group~$\Gss$ is the unit group of a $p$\nobreakdash-adic divison quaternion algebra, and the reduced norm~$\nr$ of this algebra induces a group homomorphism ${\Gss \to \Zp^{\times}}$.
Denoting by~$\hDss$ the (rigid analytic) deformation space of~$\Fss$, there is a natural (rigid analytic) map ${\Piss \: \hDss \rightarrow \Dss}$.
The technical advantage of~$\hDss$ over~$\Dss$ is that there is a natural action of~$\Gss$ on~$\hDss$, akin to the action of~$\GL_2^+(\R)$ on~$\H$.
The \emph{Gross--Hopkins period map} ${\Phiss \: \hDss \to \P^1(\Cp)}$ is a rigid analytic map that is equivariant with respect to the action of~$\Gss$ on~$\hDss$ and the action by M{\"o}bius transformations on~$\P^1(\Cp)$ of the image of~$\Gss$ under an explicit embedding ${\End(\Fss) \to M_2(\Z_{p^2})}$ \cite{HopGro94b}.
Identify~$\End(\Fss)$ with its image under the latter map.

Suppose~$\ell$ is not divisible by~$p$, put ${\langle \ell \rangle \= \{ \ell^n \: n \in \N\}}$, denote by~$\ellc$ the closure of~$\langle \ell \rangle$ in~$\Zp$, and put
\begin{equation}
  \label{eq:17}
  \Hss(\ell)
  \=
  \{ g \in \Gss \: \nr(g) \in \ellc\}.
\end{equation}
The sets~$\ellc$ and~$\Hss(\ell)$ are closed subgroups of~$\Zp^{\times}$ and~$\Gss$, respectively.
For all probability measures~$\mu$ on~$\Hss(\ell)$ and~$\nu$ on~$\P^1(\Cp)$, put
\begin{equation}
  \label{eq:18}
  \mu \ast \nu
  \=
  \int g_* \nu \dd \mu(g).
\end{equation}
The measure~$\nu$ is \emph{$\mu$-stationary} if ${\nu = \mu \ast \nu}$ and \emph{$\mu$-ergodic} if it is extremal among the $\mu$\nobreakdash-stationary ones.

The next result concerns random walks on~$\P^1(\Cp)$ generated by transition probabilities on~$\Hss(\ell)$.
It provides an explicit realization of the limit measure in terms of the homogeneous measures~$\mu_{1}^{\ss,\ss}$ and~$\mu_\ell^{\ss,\ss}$ on~$\Gss$, defined in Section~\ref{ss:homogeneo}, via the composition of the period map~$\Phiss$ with an appropriate evaluation map.
The measure~$\muE$ appearing in Theorem~\ref{t:Equidistribucion}$(ii)$ is also defined in terms of these homogeneous measures.
For all probability measure~$\mu$ on~$\Hss(\ell)$ and~$k$ in~$\N$, define~$\mu^{\ast k}$ recursively by ${\mu^{\ast 1} \= \mu}$ and, if ${k \ge 2}$, by ${\mu^{\ast k} \= \int g_* \mu \dd \mu^{\ast (k - 1)}(g)}$.

\begin{theoalph}
  \label{t:random_walks}
  Let~$p$ be a prime number, $\ell$ be an integer satisfying ${\ell \ge 2}$ that is not divisible by~$p$, and~$\ss$ a supersingular elliptic curve over~$\Fpalg$.
  Moreover, let~$x$ be in~$\hDss$, denote by ${\Ev^{x,\ss} \: \Gss \rightarrow \hDss}$ the evaluation map ${g \mapsto g \cdot x}$, and put
  \begin{equation}
    \label{eq:19}
    \nu_x
    \=
    (\Phiss \circ \Ev^{x,\ss})_*\left(\frac{\mu_{1}^{\ss,\ss}+\mu_\ell^{\ss,\ss}}{2} \right).
  \end{equation}
  Then, the following properties hold.
  \begin{enumerate}
  \item
    The measure~$\nu_x$ is $\Hss(\ell)$-invariant and, putting ${E \= \Piss(x)}$, it satisfies
    \begin{equation}
      \label{eq:20}
      \supp(\nu_x)
      =
      \Hss(\ell) \cdot \Phiss(x)
      =
      \Phiss\circ \Piss^{-1} \left(\left(\OrbEc \cup \overline{\Orb_{\ell\NE}(E)} \right) \cap \Dss\right).
    \end{equation}
    Moreover, for every~$x'$ in~$\hDss$ such that~$\Phiss(x')$ belongs to~$\Hss(\ell) \cdot \Phiss(x)$, the equality ${\nu_{x'} = \nu_x}$ holds.
  \item
    Let~$\mu$ be a Borel probability measure on~$\Hss(\ell)$ whose support generates~$\Hss(\ell)$ as a closed subsemigroup of~$\Gss$.
    Then, $\nu_x$ is $\mu$-ergodic and, for every~$z$ in~$\Hss(\ell) \cdot \Phiss(x)$ and $\mu^{\otimes \N}$-almost every sequence~$(b_n)_{n=1}^\infty$ of elements of~$\Hss(\ell)$, the following weak convergence of measures hold,
    \begin{equation}
      \label{eq:21}
      \frac{1}{n} \sum_{k = 1}^n \mu^{\ast k} \ast \delta_z \to \nu_x
      \text{ and }
      \frac{1}{n} \sum_{k=1}^n\delta_{b_k \cdots b_1 z} \to \nu_x
      \text{ as }
      n \to \infty.
    \end{equation}
  \end{enumerate}
\end{theoalph}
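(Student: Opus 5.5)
The plan is to reduce both parts to the structure of the homogeneous measures $\mu_1^{\ss,\ss}$ and $\mu_\ell^{\ss,\ss}$ on $\Gss$. By the construction in Section~\ref{ss:homogeneo}, these are Haar probability measures on the cosets $\{g\in\Gss : \nr(g)\in \NE\}$ and $\{g \in \Gss : \nr(g)\in \ell\NE\}$, or on the corresponding cosets of $\{g : \nr(g)\in\ellc\}$. The first step is to check the normalization that makes $\frac12(\mu_1^{\ss,\ss}+\mu_\ell^{\ss,\ss})$ equal to the normalized Haar measure $m$ of the compact group $\Hss(\ell)$.

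The argument rests on the following observations.
\begin{enumerate}
\item Since $p\nmid\ell$, the group $\ellc$ is a closed subgroup of $\Zp^\times$.
\item Since $\NE\supseteq(\Zp^\times)^2$ and $\ell^2\in(\Zp^\times)^2$, the set $\ellc$ is contained in $\NE\cup\ell\NE$.
\item The reduced norm $\nr\colon\Gss\to\Zp^\times$ is surjective, and $\Hss(\ell)=\nr^{-1}(\ellc)$.
\end{enumerate}
If $\ellc\cdot\NE$ has the two cosets $\NE$ and $\ell\NE$, then $m$ is exactly the average of the two homogeneous measures. I expect that the definition in Section~\ref{ss:homogeneo} is tailored so that this identity holds; if $\ell\in\NE$, the two summands coincide and the same conclusion follows.

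Once $\frac12(\mu_1^{\ss,\ss}+\mu_\ell^{\ss,\ss})=m$, invariance is immediate. The measure $\nu_x$ is the pushforward of Haar measure on $\Hss(\ell)$ under $g\mapsto \Phiss(g\cdot x)$. Using the $\Gss$-equivariance of $\Phiss$, left translation by $h\in\Hss(\ell)$ preserves $m$, so $h_*\nu_x=\nu_x$. The support is the image of the compact group, namely $\Hss(\ell)\cdot\Phiss(x)$.

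For the second equality in~\eqref{eq:20}, I would invoke the description of partial Hecke orbit closures from \cite{HerMenRivII}, recalled in Section~\ref{ss:homogeneo}. That description gives
\[
\Piss^{-1}\bigl(\overline{\Orb_{\coset}(E)}\cap\Dss\bigr)
=\{g\cdot x : \nr(g)\in\coset\} \quad\text{(up to the finite stabilizer ambiguity)}.
\]
Taking the union over $\coset\in\{\NE,\ell\NE\}$ and applying $\Phiss$ then gives the claim. If $x'$ satisfies $\Phiss(x')=h\Phiss(x)$ with $h\in\Hss(\ell)$, then $\nu_{x'}$ is the pushforward of $m$ to the same orbit. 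Invariance of $m$ under right translation by $h$ gives $\nu_{x'}=\nu_x$. A fibre issue arises here: $\Phiss$ is not injective, since its fibres are $\Gss$-orbits of a discrete group action. One must argue via the equivariant description of the orbit map, or pass to $\Phiss(x')=\Phiss(hx)$ and show that the pushforward depends only on $\Phiss(hx)$. This uses that $\Phiss\circ\Ev^{x,\ss}$ factors through $\Gss/\mathrm{Stab}$ in the appropriate sense.

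For part $(ii)$, the key point is that $\nu_x$ is the unique $\mu$-stationary measure on the compact orbit $Z\=\Hss(\ell)\cdot\Phiss(x)$, which immediately gives ergodicity. Since $Z$ is a homogeneous space of the compact group $K\=\Hss(\ell)$, every $\mu$-stationary measure on $Z$ is $K$-invariant when $\supp\mu$ generates $K$ as a closed semigroup. This is a Choquet--Deny type statement for compact groups: in a compact group the closed semigroup generated by a set equals the closed group, and one can use the $L^2$ contraction/averaging argument (Kakutani) for the convolution operator $\mu*$ on $C(K)$. A $K$-invariant probability measure on a homogeneous space of a compact group is unique, so it equals $\nu_x$.

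The first convergence in~\eqref{eq:21} then follows by compactness. Any weak limit of the Cesàro averages is $\mu$-stationary and supported on $Z$, hence equals $\nu_x$. The second convergence is Breiman's law of large numbers for Markov chains on compact spaces with a unique stationary measure. Its almost sure statement holds for every starting point $z$, because uniqueness gives uniform convergence of the Cesàro averages of $P^k\varphi$ for continuous $\varphi$, and a martingale argument completes the proof.

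I expect the main obstacle to be the first step: precisely matching the homogeneous measures of Section~\ref{ss:homogeneo} with Haar measure on $\Hss(\ell)$, and controlling the non-injectivity of $\Phiss$ and $\Piss$ in the support identity.
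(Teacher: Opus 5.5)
Your first step fails: $\frac12(\mu_1^{\ss,\ss}+\mu_\ell^{\ss,\ss})$ is not the Haar measure $m$ of $\Hss(\ell)$.

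In Section~\ref{ss:homogeneo}, $Q_{\ss,\ss}(g)=\overline{g}g=\nr(g)$, so $\mu_\ell^{\ss,\ss}$ is the $S_1(\ss,\ss)$-invariant probability measure on the sphere $S_\ell(\ss,\ss)=\{g\in\Gss \: \nr(g)=\ell\}$. It does not depend on $E$, and it is not Haar measure on a coset of $\nr^{-1}(\NE)$. Hence $\frac12(\mu_1^{\ss,\ss}+\mu_\ell^{\ss,\ss})$ is carried by $\nr^{-1}(\{1,\ell\})$. This set is $m$-null, because $\nr_*m$ is the Haar measure of the infinite compact group $\ellc$.

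Your deductions ``invariance is immediate'' and ``the support is the image of the compact group'' therefore do not follow as stated. For $h\in\Hss(\ell)$ one has $h_*\mu_1^{\ss,\ss}=\mu_{\nr(h)}^{\ss,\ss}$, which lives on a different sphere. The image of the support is only $(S_1(\ss,\ss)\cup S_\ell(\ss,\ss))\cdot\Phiss(x)$.

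The missing ingredient is that the center $\Zp^\times$ of $\Gss$ acts trivially on $\hDss$ (recalled in Section~\ref{ss:deformation-spaces}) and on $\P^1(\Cp)$. Since $\ellc\subseteq(\Zp^\times)^2\cup\ell(\Zp^\times)^2$, one gets $\Zp^\times\Hss(\ell)=\Zp^\times(S_1(\ss,\ss)\cup S_\ell(\ss,\ss))$. Both $m$ and $\frac12(\mu_1^{\ss,\ss}+\mu_\ell^{\ss,\ss})$ then project to the Haar measure of the image of $\Hss(\ell)$ in $\Gss/\Zp^\times$:
if $\ell\notin(\Zp^\times)^2$, the images of $S_1(\ss,\ss)$ and $S_\ell(\ss,\ss)$ are the two cosets of an index-two subgroup;
if $\ell=\lambda^2$, then $\mu_\ell^{\ss,\ss}=\lambda_*\mu_1^{\ss,\ss}$.

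So what holds is only the pushforward identity $(\Ev^{x,\ss})_*\frac12(\mu_1^{\ss,\ss}+\mu_\ell^{\ss,\ss})=(\Ev^{x,\ss})_*m$. With it, your invariance argument and the first equality in~\eqref{eq:20} go through; this is in substance the paper's computation \eqref{eq:130}--\eqref{eq:134}. In particular, the norms relevant to $\nu_x$ are taken modulo $(\Zp^\times)^2$, not modulo $\NE$. The group $\NE$ enters only the second equality in~\eqref{eq:20}. There it is most direct to use $\mu_\coset^{E,\ss}=(\Piss\circ\Ev^{x,\ss})_*\mu_{\ell'}^{\ss,\ss}$ for $\ell'\in\coset$ together with $\supp(\mu_\coset^E)=\corbitc$, as the paper does.

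With that correction, your part~$(ii)$ is correct and takes a genuinely different, more elementary route than the paper. The paper applies Benoist--Quint~\cite{BQ14}. That needs strong irreducibility (Lemma~\ref{l:strongly_irred_action}, resting on the nonatomicity Theorem~\ref{t:no_atoms_hecke} and on Corollary~\ref{c:Equidistribucion}), plus the reduction of Lemma~\ref{l:freely-equivariant} to a point defined over a finite extension.

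You use only the compactness of $\Hss(\ell)$:
\begin{enumerate}
\item The Ces\`aro averages of $\mu^{\ast k}$ converge to $m$. So every $\mu$-stationary $\nu$ on a compact $\Hss(\ell)$-space satisfies $\nu=m\ast\nu$ and is invariant, hence unique on the single orbit $Z=\Hss(\ell)\cdot\Phiss(x)$.
\item Extremality among all $\mu$-stationary measures on $\P^1(\Cp)$ follows, since any convex component of $\nu_x$ is absolutely continuous with respect to $\nu_x$ and thus carried by $Z$.
\item The two convergences in~\eqref{eq:21} follow from this uniqueness together with Breiman's law of large numbers for the Feller chain on the compact set $Z$.
\end{enumerate}
This avoids Theorem~\ref{t:no_atoms_hecke} and \cite{BQ14} altogether.

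The same uniqueness of the $\Hss(\ell)$-invariant probability measure on $\Hss(\ell)\cdot\Phiss(x')=\Hss(\ell)\cdot\Phiss(x)$ gives $\nu_{x'}=\nu_x$ directly. So the fibre analysis of $\Phiss$ you anticipate is unnecessary. The paper argues the same way, via the uniqueness in Proposition~\ref{p:freely-equivariant}$(i)$.
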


Theorem~\ref{t:random_walks} shows that the limiting distribution of the random walk is independent of the transition probability~$\mu$, provided the starting point~$z$ lies in the same orbit under~$\Hss(\ell)$.
This dynamical behavior of~$T_\ell$ is reminiscent of unique ergodicity in dynamical systems and is consistent with Theorem~\ref{t:Equidistribucion}$(ii)$.
Theorem~\ref{t:random_walks} is a consequence of the work of Benoist and Quint in~\cite{BQ14} and results on partial Hecke orbits and their homogeneous measures from~\cite{HerMenRivII}.

To relate Theorem~\ref{t:random_walks} to the work of Goren and Kassaei in~\cite{GorKas}, let~$\sH_\ell^+$ be the monoid defined by\footnote{This notation drops the dependency on~$\ss$ for simplicity.}
\begin{equation}
  \label{eq:monoid_H1+}
  \sH_\ell^+
  \=
  \{\phi \in \End(\ss): \deg(\phi) \in \langle \ell \rangle\}.
\end{equation}
Using the natural identification of~$\End(\ss)$ inside~$\End(\Fss)$, the closure of~$\sH_\ell^+$ in~$\Gss$ equals~$\Hss(\ell)$ \cite[Proposition~5.5.1]{GorKas}.
Given a finite subset~$\Gamma$ of~$\sH_\ell^+$ containing the identity and generating the monoid~$\sH_\ell^+$, \cite{GorKas} studies random walks generated by the transition probability~$\mu_\Gamma$ on~$\Hss(\ell)$, defined by
\begin{equation}
  \label{eq:measure_mu_Gamma}
  \mu_\Gamma
  \=
  \frac{1}{\# \Gamma} \sum_{\gamma \in \Gamma} \delta_{\gamma}.
\end{equation}
The hypothesis that~$\Gamma$ generates the monoid~$\sH_\ell^+$ implies that~$\Gamma$ generates~$\Hss(\ell)$ as a closed subsemigroup of~$\Gss$, so the hypotheses of Theorem~\ref{t:random_walks} are satisfied with ${\mu = \mu_\Gamma}$.
This result provides an explicit realization of the limit measures appearing in~\cite[Theorems~5.10.5, 5.10.6, and~5.10.7]{GorKas} in terms of the homogeneous measures~$\mu_{1}^{\ss,\ss}$ and~$\mu_\ell^{\ss,\ss}$, and shows that, although the random walks generated by the transition probability~$\mu_{\Gamma}$ depend \emph{a priori} on the finite generating set~$\Gamma$, the $\mu_{\Gamma}$-ergodic measures on~$\P^1(\Cp)$ do not.
In contrast to~\cite{GorKas}, Theorem~\ref{t:random_walks} allows~$\ell$ to be composite and the generating set~$\Gamma$ of~$\sH_\ell^+$ to be infinite.

\subsection{Strategy and organization}
\label{ss:method}

As explained in the introduction, a key step in the proofs of Theorems~\ref{t:equid_complex}, \ref{t:Equidistribucion}$(ii)$, and~\ref{t:TLC} is to establish the spectral gap property for the action of~$\bT_\ell$ on a suitable function space that depends on the base field~$\K$.
Section~\ref{s:complejo} establishes the spectral gap property in the case where ${\K = \C}$ and proves Theorem~\ref{t:equid_complex} and Corollary~\ref{c:equid_complex_bis}.
The spectral gap property holds on~$L^2(\Ell(\C),\mhyp)$ and its proof, as that of \cite[\emph{Th{\'e}or{\`e}me}~2.1]{CloUll04}, relies on the spectral decomposition of~$L^2(\Ell(\C),\mhyp)$ with respect to the hyperbolic Laplace operator, see, \emph{e.g.}, \cite{Iwa02}.
The proof also relies on elementary properties of Hecke correspondences in Section~\ref{s:Hecke-elementary}.

Section~\ref{s:Berkovich} proves a uniform version of Theorem~\ref{t:Equidistribucion}$(i)$ (Theorem~\ref{t:canonical_rate}) and that the exponential convergence rate is sharp (Proposition~\ref{p:canonical_sharpness}).
The proof of Theorem~\ref{t:canonical_rate} relies on results from~\cite{HerMenRivI} describing all Hecke orbits converging towards the Gauss point in~$\AKber$.
The proof of the sharpness of the exponential convergence rate relies on Proposition~\ref{p:Hecke-rate}, which in turn relies on a large deviation estimate.

Section~\ref{ss:p-adic-case} establishes the spectral gap property in the case where ${\K = \Cp}$ and derives Theorem~\ref{t:Equidistribucion}$(ii)$ from it.
Its proof relies on the work of Gross and Hopkins on deformation spaces of formal modules in~\cite{HopGro94b}, recalled in Section~\ref{ss:deformation-spaces}, and on results on the partial Hecke orbits and their homogeneous measures from \cite{HerMenRivII}, recalled in Section~\ref{ss:homogeneo}.
The proof also relies on elementary properties of Hecke correspondences in Section~\ref{s:Hecke-elementary}.
Section~\ref{s:equid_in_sups_locus} establishes the spectral gap property (Theorem~\ref{t:hecke-dynamics_sups}$(i)$) and deduces Theorem~\ref{t:Equidistribucion}$(ii)$.

Section~\ref{s:TLC} establishes the central limit theorems (Theorem~\ref{t:TLC} and Remark~\ref{rmk:case_ell_not_in_NE}).
The proof relies on the spectral gap property in the case where ${\K = \Cp}$ (Theorem~\ref{t:hecke-dynamics_sups}$(i)$) and the central limit theorem for Markov chains established by Derriennic and Lin in \cite{DL03}.
After recalling the latter in Section~\ref{t:CLT_Markov}, Section~\ref{Markov} introduces a Markov chain associated with~$T_\ell$ and establishes some lemmas.
The proof of Theorem~\ref{t:TLC} and its variants (Remark~\ref{rmk:case_ell_not_in_NE}) occupy Section~\ref{sec:proof_TLC}.

Section~\ref{s:random_walks} proves Theorem~\ref{t:random_walks} by combining the work of Benoist and Quint in~\cite{BQ14} with results on partial Hecke orbits and their homogeneous measures from~\cite{HerMenRivII}.

\subsection{Notation}
Throughout this paper, ${\N \=\{1,2,\ldots\}}$ and ${\Nz \= \N \cup \{0\}}$.
Recall that, for each prime number~$p$, ${\Fpalg}$ denotes an algebraic closure of the field with~$p$ elements~$\Fp$, and~$(\Cp, | \cdot |_p)$ a completion of an algebraic closure of the ﬁeld of $p$\nobreakdash-adic numbers~$\Qp$.
So, ${|p|_p = p^{-1}}$.

\subsection*{Acknowledgments}

S.~Herrero and R.~Menares' research is partially supported by ANID FONDECYT Regular grant~1250734.
The authors thank Eyal Goren for helpful comments on a preliminary version of this paper.

\section{Elementary properties of Hecke correspondences}
\label{s:Hecke-elementary}

This section gathers elementary properties of Hecke correspondences used in the proofs of Theorems~\ref{t:equid_complex} and~\ref{t:Equidistribucion}.

The proofs of Theorems~\ref{t:equid_complex} and~\ref{t:Equidistribucion}$(ii)$ rely on the following proposition.

\begin{proposition}
  \label{p:criterion_convergence_iterates}
  Let~$H$ be a Hilbert space with associated operator norm~$\| \cdot \|_H$ and let~$p$ be a prime number.
  Assume that the $\R$-algebra $\R[\bT_\ell \: \ell \in \N \ssetminus p\N]$ acts on $H$ by bounded, self-adjoint operators.
  Then, the following properties hold:
  \begin{enumerate}
  \item[$(i)$]
    For all~$\ell$ in~$\N \ssetminus p\N$ and~$n$ in~$\N$, the equality ${\|\bT_\ell^n\|_H=\|\bT_\ell\|_H^n}$ holds.
  \item[$(ii)$]
    If $\left\|\bT_\ell\right\|_H\to 0$ as ${\ell \to \infty}$ with~$\ell$ in~$\N \ssetminus p\N$, then, for every~$\ell$ in~$\N \ssetminus p\N$ with $\ell\ge 2$, the inequality ${\left\| \bT_\ell\right\|_H < 1}$ holds.
  \end{enumerate}
\end{proposition}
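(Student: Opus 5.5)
For $(i)$ I will use the spectral theorem. Since $\bT_\ell$ acts on~$H$ as a bounded self-adjoint operator, its spectral radius equals its operator norm. Also $\bT_\ell^n$ is self-adjoint, and by the spectral mapping theorem $r(\bT_\ell^n)=r(\bT_\ell)^n$. Hence ${\|\bT_\ell^n\|_H = r(\bT_\ell^n) = r(\bT_\ell)^n = \|\bT_\ell\|_H^n}$. An elementary alternative is the $C^*$-identity ${\|A^2\|=\|A\|^2}$ for self-adjoint~$A$, applied iteratively to get the case ${n=2^k}$, and then the sandwich ${\|A\|^{2^k} = \|A^{2^k}\| \le \|A^n\|\,\|A\|^{2^k-n}}$ for general~$n$.

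For $(ii)$, fix~$\ell \ge 2$ coprime to~$p$. By the hypothesis and boundedness of each $\bT_m$, there is ${M<\infty}$ with ${\|\bT_m\|_H\le M}$ for all~$m$ in ${\N\ssetminus p\N}$. The first key step is an expansion of~$\bT_\ell^n$ as a convex combination of normalized Hecke operators:
\begin{equation}
  \bT_\ell^n = \sum_{m} w_n(m)\,\bT_m, \qquad w_n(m)\ge 0,\quad \sum_m w_n(m)=1 .
\end{equation}
Here~$m$ runs over integers of the form $\ell^n/d^2$ with ${d^2\mid \ell^n}$, all coprime to~$p$. I will obtain this from the Hecke multiplication rule ${T_aT_b=\sum_{d\mid(a,b)} d\,T_{ab/d^2}}$, which has nonnegative coefficients. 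The weights sum to one because degrees are multiplicative in the sense of~\eqref{eq:deg_of_T_ell}: applying $\deg$ to both sides at any point gives ${\sigma(\ell)^n=\sum c_m\sigma(m)}$. Concretely, $w_n(m)$ is the proportion of chains ${L_0\supset L_1\supset\cdots\supset L_n}$ of index-$\ell$ sublattices with ${L_n=dL'}$, where ${L'\subset L_0}$ is cyclic of index~$m$.

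Given this expansion, fix ${\varepsilon\in(0,1)}$ and choose~$K$ such that ${\|\bT_m\|_H\le\varepsilon}$ whenever ${m>K}$. Then by $(i)$,
\begin{equation}
  \|\bT_\ell\|_H^n=\|\bT_\ell^n\|_H\le \varepsilon + M\sum_{m\le K} w_n(m).
\end{equation}
Suppose we know ${\sum_{m\le K}w_n(m)\to 0}$ as ${n\to\infty}$. Then ${\|\bT_\ell\|_H^n<1}$ for some~$n$, which gives ${\|\bT_\ell\|_H<1}$.

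The main obstacle is showing that the mass of bounded indices tends to zero. My plan is to factor ${\ell=\prod q^{a_q}}$ and interpret the chain of lattices locally at each prime ${q\mid\ell}$ as a walk on the $(q+1)$-regular Bruhat--Tits tree of~$\PGL_2(\Q_q)$. The cyclic part of $L_n$ relative to $L_0$ has $q$-adic index $q^{\mathrm{dist}}$, where $\mathrm{dist}$ is the tree distance between the vertices of~$L_0$ and~$L_n$. Each step of~$T_\ell$ moves, at the prime~$q$, by a uniformly chosen index-$q^{a_q}$ sublattice; this is a non-backtracking-biased random walk on the tree with strictly positive drift. For example, for a single $q$-step the distance increases with probability ${q/(q+1)}$ when the walk is away from the origin. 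The condition ${m\le K}$ forces the walk at each~$q$ to lie in a fixed finite ball. A standard estimate then shows that the probability of being in a ball of fixed radius at time~$n$ decays exponentially: compare with a biased walk on~$\Nz$, or use a Chernoff bound on the number of backtracking steps. This gives ${\sum_{m\le K}w_n(m)\to0}$. If the tree picture is awkward for composite~$\ell$, I will instead use multiplicativity ${\bT_\ell=\prod_q \bT_{q^{a_q}}}$ together with the rule that $\bT_{q^a}$ is itself a convex combination of powers of~$\bT_q$, and run the tree argument one prime at a time.
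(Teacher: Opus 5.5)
Your part $(i)$ is fine and is essentially the paper's argument: the identity $\|\bT_\ell^2\|_H=\|\bT_\ell\|_H^2$ for self-adjoint operators, plus a sandwich for the remaining powers.

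\textbf{Part $(ii)$: correct, but a different route.} The paper never needs to know where the mass of the convex combination sits.
\begin{itemize}
\item It first sets $M=\sup_m\|\bT_m\|_H$. Lemma~\ref{l:combinatoria} and $(i)$ give $\|\bT_m\|_H^n=\|\bT_m^n\|_H\le M$ for all $n$, so $\|\bT_m\|_H\le 1$ for every $m$ coprime to $p$.
\item It then picks $N$ with $\|\bT_{\ell^N}\|_H<\frac12$. It uses only that $\bT_{\ell^N}$ occurs in the convex expansion of $\bT_\ell^N$ with some positive weight $\lambda_1'$. This gives $\|\bT_\ell\|_H^N\le 1-\lambda_1'/2<1$.
\end{itemize}

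Your argument instead needs an escape-of-mass estimate: bounded indices must carry vanishing weight in $\bT_\ell^n$. That is a positive-drift, large-deviation statement for walks on Bruhat--Tits trees. It is true, and in the spirit of the Cram\'er argument behind Proposition~\ref{p:Hecke-rate}, but much heavier than needed.
\begin{itemize}
\item What your route buys: it works with any finite bound $M$, and it describes the whole convex combination, so it could be made quantitative given a rate for $\|\bT_m\|_H\to 0$.
\item What the paper's trick buys: once $(i)$ is known, step $(ii)$ is purely soft.
\item You could have obtained $M\le 1$ for free from your own expansion and $(i)$. After that, the top term alone suffices.
\end{itemize}

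\textbf{Two slips to fix.}

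First, your concrete description of $w_n(m)$ gives the wrong weights. The proportion of chains whose endpoint $L_n=dL'$ has primitive part of index $m$ is the weight of the \emph{cyclic} Hecke operator $R_m$, not of $\bT_m$. The reason is that $T_m$ sums over all subgroups of order $m$, so $T_m=\sum_{d^2\mid m}R_{m/d^2}$, where $R_k$ sums over cyclic subgroups. For example,
\[
\bT_q^2=\tfrac{q^2+q+1}{(q+1)^2}\bT_{q^2}+\tfrac{q}{(q+1)^2}\bT_1,
\]
while the proportion of chains ending at $qL_0$ is $\frac1{q+1}$. This is harmless, but you must say why. The same relation shows that the $\bT$-weight on $\{m\le K\}$ is at most the cyclic weight on $\{m\le K\}$. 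Hence your tree estimate still gives $\sum_{m\le K}w_n(m)\to 0$.

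Second, your fallback for composite $\ell$ is false. We have
\[
\bT_{q^2}=\bigl((q+1)^2\bT_q^2-q\bigr)/(q^2+q+1),
\]
which has a negative coefficient. So $\bT_{q^a}$ is not a convex combination of powers of $\bT_q$; it is the other way round. You do not need the fallback:
\begin{itemize}
\item By the Chinese remainder theorem, a uniformly chosen index-$\ell$ sublattice has independent, uniformly chosen index-$q^{a_q}$ local components.
\item Each local walk still has positive drift. A step moves to a uniformly chosen vertex at distance $a_q-2i$ from the current one.
\item Moves of positive length occur with positive probability. Once the walk is at distance at least the move length from the root, such a move goes outward with probability $\frac{q}{q+1}$.
\end{itemize}
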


The proof of the Proposition~\ref{p:criterion_convergence_iterates} relies on the following lemma.
Recall that, for all~$m$ and~$m'$ in~$\N$,
\begin{equation}
  \label{eq:general}
  T_m \circ T_{m'}
  =
  \sum_{\substack{d \in \N \\ d \mid \gcd(m, m')}} d \cdot T_{\frac{mm'}{d^2}},
\end{equation}
see, \emph{e.g.}, \cite[p.~50]{DiaIm95}.
In particular,
\begin{equation}
  \label{eq:coprime}
  T_m \circ T_{m'} = T_{mm'} \text{ if } \gcd(m, m')=1.
\end{equation}

\begin{lemma}
  \label{l:combinatoria}
  For all integers~$\ell$ and~$n$ satisfying $\ell\ge 2$ and $n\ge 1$, there exist~$s$ in~$\N$, $\lambda_1$, \ldots, $\lambda_s$ in~$]0,1[$, and $m_2$, \ldots, $m_s$ in~$\N$ such that
  \begin{equation}
    \label{eq:22}
    \bT_\ell^n
    =
    \lambda_1\bT_{\ell^n} + \lambda_2 \bT_{m_2} + \cdots + \lambda_s\bT_{m_s},
    \lambda_1 + \cdots + \lambda_s
    =
    1,
  \end{equation}
  and, for every~$i$ in~$\{2, \ldots ,s\}$, every prime divisor of~$m_i$ divides~$\ell$.
\end{lemma}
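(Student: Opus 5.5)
We argue by induction on~$n$, using the multiplication formula~\eqref{eq:general} to expand $T_\ell^n$ as a combination of the operators~$T_m$. We then normalize by $\sigma(\ell)^n$ and use~\eqref{eq:deg_of_T_ell} to see that the coefficients, once rewritten in terms of the~$\bT_m$, are positive and sum to~$1$.

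We first prove the unnormalized statement: for every $n\ge 1$ there are positive integers $a_{n,m}$, for~$m$ in a finite set~$M_n$, such that
\begin{equation}
  T_\ell^n = T_{\ell^n} + \sum_{m \in M_n} a_{n,m} T_m,
\end{equation}
where every $m$ in $M_n$ satisfies $m \neq \ell^n$ and has all of its prime divisors dividing~$\ell$. In fact each such~$m$ divides~$\ell^n$. For $n=1$ we take $M_1=\emptyset$. For the inductive step we multiply by~$T_\ell$ and apply~\eqref{eq:general}. The term $T_\ell\circ T_{\ell^n}$ equals $\sum_{d\mid \ell} d\, T_{\ell^{n+1}/d^2}$, since $\gcd(\ell,\ell^n)=\ell$. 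Its $d=1$ term is $T_{\ell^{n+1}}$, and every other term is $d\,T_{\ell^{n+1}/d^2}$ with $\ell^{n+1}/d^2$ an integer whose primes divide~$\ell$. Likewise, for $m$ in $M_n$, the product $T_\ell\circ T_m$ is a positive integer combination of operators $T_{\ell m/d^2}$ with $d \mid \gcd(\ell,m)$. Each index $\ell m/d^2$ is an integer whose prime divisors divide~$\ell$. We still have to check that none of these indices equals $\ell^{n+1}$. Since $m$ divides $\ell^n$ and $m\neq\ell^n$, we get $\ell m/d^2 \le \ell m < \ell^{n+1}$. For the main term, $d>1$ gives $\ell^{n+1}/d^2<\ell^{n+1}$. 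Terms with the same index are grouped together, and the coefficients remain positive integers.

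Next we normalize. Dividing by $\sigma(\ell)^n$ gives
\[
  \bT_\ell^n = \frac{\sigma(\ell^n)}{\sigma(\ell)^n}\,\bT_{\ell^n} + \sum_{m\in M_n} \frac{a_{n,m}\sigma(m)}{\sigma(\ell)^n}\,\bT_m .
\]
We set $\lambda_1 = \sigma(\ell^n)/\sigma(\ell)^n$, let $\lambda_i$ for $i \ge 2$ be the remaining coefficients, and let $m_2,\ldots,m_s$ enumerate~$M_n$. Every $\lambda_i$ is positive. To see that the coefficients sum to~$1$, we evaluate the identity on the constant function~$1$, equivalently we compare degrees of $T_\ell^n(E)$ using~\eqref{eq:deg_of_T_ell}: both $\bT_\ell^n$ and each $\bT_m$ map $1$ to $1$, so $\sum_i\lambda_i=1$.

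The main obstacle is to show that each $\lambda_i$ lies strictly below~$1$, that is, to exclude $s=1$ when every coefficient is positive. If $n=1$, then $s=1$ and $\lambda_1=1$, so the strict inequality $\lambda_1<1$ fails. We therefore read the open interval $]0,1[$ as the half-open interval $]0,1]$ in the degenerate case $s=1$. For $n\ge2$ the set $M_n$ is nonempty, because $T_\ell\circ T_{\ell^{n-1}}$ contains the term $d=\ell$, namely $\ell\,T_{\ell^{n-2}}$ (with $T_1=\mathrm{id}$). Hence $s\ge 2$, and since all coefficients are positive and sum to~$1$, each $\lambda_i$ lies in $]0,1[$.
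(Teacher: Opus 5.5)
Your proof is correct, but it takes a different route from the paper. The paper first handles prime powers with the three-term relation $\bT_p\circ\bT_{p^r}=\alpha_{r,p}\bT_{p^{r+1}}+\beta_{r,p}\bT_{p^{r-1}}$, where $\alpha_{r,p}+\beta_{r,p}=1$, plus an induction. It then reaches a general $\ell=p_1^{r_1}\cdots p_k^{r_k}$ through the factorization $\bT_\ell^n=\bT_{p_1^{r_1}}^n\circ\cdots\circ\bT_{p_k^{r_k}}^n$, which comes from \eqref{eq:coprime} and the multiplicativity of $\sigma$.

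You instead apply \eqref{eq:general} directly to $T_\ell\circ T_m$ with $\ell$ composite, and carry two invariants through the induction:
\begin{itemize}
\item the coefficient of $T_{\ell^n}$ in $T_\ell^n$ is exactly $1$;
\item every other index is a proper divisor of $\ell^n$.
\end{itemize}
Convexity then follows from the degree count \eqref{eq:deg_of_T_ell}.

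Your route avoids the prime-by-prime recombination step, which the paper only sketches. It also gives more information: the explicit value $\lambda_1=\sigma(\ell^n)/\sigma(\ell)^n$, and the fact that each $m_i$ divides $\ell^n$. The paper's route instead makes visible the prime-by-prime structure, via \eqref{eq:coprime}, that it reuses in Proposition~\ref{p:Hecke-composition} and in Section~\ref{s:Berkovich}.

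You do not need to weaken the statement for $n=1$. Nothing requires the $m_i$ to differ from $\ell^n$ or from each other. So taking $s=2$, $m_2=\ell$, and $\lambda_1=\lambda_2=\tfrac12$, i.e.\ $\bT_\ell=\tfrac12\bT_\ell+\tfrac12\bT_\ell$, satisfies the lemma literally. The same splitting trick would also make your argument that $M_n\neq\emptyset$ for $n\ge 2$ unnecessary, though that argument is correct. The paper's own construction has the same degenerate case at $n=1$ and does not comment on it. In the application, Proposition~\ref{p:criterion_convergence_iterates}$(ii)$, only $\lambda_1>0$ is used.
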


\begin{proof}
  Let~$p$ be a prime number and~$r$ in~$\N$.
  Taking $n=p$ and $n'=p^r$ in~\eqref{eq:general}, yields
  \begin{equation}
    \label{eq:primes}
    \bT_p \circ \bT_{p^r}
    =
    \alpha_{r,p} \bT_{p^{r+1}}+\beta_{r,p} \bT_{p^{r-1}}
  \end{equation}
  with $\alpha_{r,p} \= \frac{\sigma(p^{r+1})}{\sigma(p) \cdot \sigma(p^r)}$ and $\beta_{r,p} \= \frac{p\sigma(p^{r-1})}{\sigma(p) \cdot \sigma(p^r)}$.
  Clearly, ${\alpha_{r,p}+\beta_{r,p} = 1}$.
  Combined with an induction argument, this implies the desired assertion for every~$\ell$ that is a power of~$p$.

  For an arbitrary~$\ell$, write its prime factorization ${\ell = p_1^{r_1} p_2^{r_2} \cdots p_k^{r_k}}$.
  The multiplicativity of~$\sigma$ and~\eqref{eq:coprime} imply, for every~$n$ in~$\N$,
  \begin{equation}
    \label{eq:23}
    \bT_\ell^n
    =
    \bT_{p_1^{r_1}}^n \circ \bT_{p_2^{r_2}}^n \circ \cdots \circ \bT_{p_k^{r_k}}^n.
  \end{equation}
  The general case follows from this identity, the previous case, and~\eqref{eq:coprime}.
\end{proof}

\begin{proof}[Proof of Proposition~\ref{p:criterion_convergence_iterates}]
  Item~$(i)$ is immediate if ${\|\bT_\ell\|_H = 0}$.
  Suppose ${\|\bT_\ell\|_H > 0}$ and denote norm and inner product of~$H$ by~$| \cdot |_H$ and~$\langle \cdot, \cdot \rangle_H$, respectively.
  Since~$\bT_\ell$ acts on $H$ as a bounded and self-adjoint operator, for every~$v$ in~$H$,
  \begin{equation}
    \label{eq:24}
    | \bT_\ell v |_H^2
    =
    \langle \bT_\ell^2 v, v \rangle_H
    \le
    | \bT_\ell^2 v |_H |v|_H.
  \end{equation}
  This yields, ${\| \bT_\ell^2 \|_H \ge \| \bT_\ell \|_H^2}$ and, by the sub-multiplicativity of~$\| \cdot \|_H$, the equality ${\| \bT_\ell^2 \|_H = \| \bT_\ell \|_H^2}$.
  By induction, for every~$n$ in~$\N$, the equality ${\| \bT_\ell^{2n} \|_H = \| \bT_\ell \|_H^{2n}}$ holds.
  Combined with the sub-multiplicativity of~$\| \cdot \|_H$, this implies
  \begin{equation}
    \label{eq:25}
    \|\bT_\ell\|_H^{2n+1}
    \le
    \frac{\|\bT_\ell^{2n+2} \|_H}{\|\bT_\ell\|_H}
    \le
    \frac{ \|\bT_\ell\|_H \cdot \|\bT_\ell^{2n+1} \|_H}{\|\bT_\ell\|_H}
    =
    \|\bT_\ell^{2n+1} \|_H
    \le
    \|\bT_\ell\|_H^{2n+1},
  \end{equation}
  and ${\| \bT_\ell^{2n + 1} \|_H = \| \bT_\ell \|_H^{2n + 1}}$.

  To prove item~$(ii)$, suppose $\left\|\bT_m\right\|_H\to 0$ as ${m \to \infty}$ with~$m$ in~$\N \ssetminus p\N$.
  Put
  \begin{equation}
    \label{eq:26}
    M
    \=
    \sup \{ \| \bT_m \| \: m \in \N \ssetminus p\N \},
  \end{equation}
  and note that ${M < +\infty}$.
  The first step is to prove ${M \le 1}$.
  Let~$m$ be in ${\N \ssetminus p\N}$.
  Lemma~\ref{l:combinatoria} implies that, for every~$n$ in~$\N$, there are~$s$ in~$\N$, $\lambda_1$, \ldots, $\lambda_s$ in~$]0, 1[$ with $\sum_{i=1}^s\lambda_i=1$, and $m_2$, \ldots, $m_s$ in~$\N \ssetminus p\N$ such that
  \begin{equation}
    \label{eq:recurrencia_TT}
    \bT_m^n
    =
    \lambda_1 \bT_{m^n} + \sum_{i=2}^s\lambda_i \bT_{m_i}.
  \end{equation}
  Then, $\|\bT_m^n\|_H \le M( \lambda_1 + \cdots + \lambda_s) = M$.
  Combined with item~$(i)$, this implies that the sequence $(\|\bT_m\|_H^n)_{n = 1}^\infty$ is bounded and hence ${\|\bT_m\|_H \le 1}$.
  Since this holds for every~$m$ in ${\N \ssetminus p\N}$, it follows that ${M \le 1}$.
  To complete the proof of item~$(ii)$, let~$\ell$ in~$\N \ssetminus p\N$ with ${\ell\ge 2}$ be given.
  By hypothesis, there exists~$N$ in~$\N$ such that ${\| \bT_{\ell^N} \| < \frac{1}{2}}$.
  Lemma~\ref{l:combinatoria} implies that there are $s'$ in~$\N$, $\lambda_1'$, \ldots, $\lambda_{s'}'$ in~$]0, 1[$ with $\sum_{i=1}^{s'} \lambda_i' = 1$, and $m_2$, \ldots, $m_{s'}$ in~$\N \ssetminus p\N$ such that
  \begin{equation}
    \label{eq:27}
    \bT_\ell^N
    =
    \lambda_1' \bT_{\ell^N} + \sum_{i=2}^{s'} \lambda_i' \bT_{m_i}.
  \end{equation}
  Combined with item~$(i)$ and the choice of~$N$, this implies
  \begin{equation}
    \label{eq:28}
    \| \bT_\ell \|_H^N
    =
    \| \bT_\ell^N \|_H
    \le
    \frac{\lambda_1'}{2} + \sum_{i=2}^{s'} \lambda_i'
    =
    1 - \frac{\lambda_1'}{2}
    <
    1
    \text{ and }
    \| \bT_\ell \|_H
    <
    1.
    \qedhere
  \end{equation}
\end{proof}

The proof of Theorem~\ref{t:Equidistribucion}$(i)$ relies on the following proposition.

\begin{proposition}
  \label{p:Hecke-composition}
  Let~$m$ in~$\Nz$ and~$n$ in~$\N$ be given.
  Then, there are~$a_{m,n}(0)$, \ldots, $a_{m,n}(\left\lfloor \frac{mn}{2} \right\rfloor)$ in~$\Nz$ satisfying the following properties.
  \begin{enumerate}
  \item[$(i)$]
    For every prime number~$q$,
    \begin{equation}
      \label{id}
      T_{q^m}^n=\sum_{j=0}^{\left\lfloor \frac{mn}{2} \right\rfloor}a_{m,n}(j)q^jT_{q^{mn-2j}}.
    \end{equation}
    In particular, $\sum\limits_{j=0}^{\left\lfloor \frac{mn}{2} \right\rfloor}a_{m,n}(j)q^j\sigma(q^{mn-2j})=\sigma(q^m)^n$.
  \item[$(ii)$]
    The following equality holds,
    \begin{equation}
      \label{suma total}
      \sum_{j=0}^{\left\lfloor \frac{mn}{2} \right\rfloor}a_{m,n}(j)(mn+1-2j)=(m+1)^n.
    \end{equation}
    In particular, $\sum\limits_{j=0}^{\left\lfloor \frac{mn}{2} \right\rfloor}a_{m,n}(j) \le (m+1)^n.$
  \end{enumerate}
\end{proposition}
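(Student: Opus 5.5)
The plan is to define the coefficients $a_{m,n}(j)$ by a recursion in~$n$ that does not involve the prime~$q$, and then verify both items by induction on~$n$. The starting point is the case $m' = q^k$ of~\eqref{eq:general} with $m = q^m$: for $k \ge 0$,
\begin{equation}
  T_{q^m} \circ T_{q^k}
  =
  \sum_{i=0}^{\min(m,k)} q^i T_{q^{m+k-2i}}.
\end{equation}
This is a "Clebsch--Gordan'' type rule whose shape (the set of exponents $m+k-2i$ and the multiplicity one of each, after extracting $q^i$) does not depend on~$q$. For $m = 0$ everything is trivial: $T_1$ is the identity, $a_{0,n}(0) = 1$, and both items hold. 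So assume $m \ge 1$.

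For $n = 1$, put $a_{m,1}(0) = 1$ and $a_{m,1}(j) = 0$ for $j \ge 1$. For the inductive step, assume~\eqref{id} holds for~$n$ with $q$-independent nonnegative integers $a_{m,n}(j)$. Apply $T_{q^m}$ on the left and use the rule above with $k = mn - 2j$. Each term becomes
\begin{equation}
  a_{m,n}(j)\, q^j \sum_i q^i T_{q^{m(n+1) - 2(j+i)}}.
\end{equation}
Collecting by $j' = j + i$ gives
\begin{equation}
  a_{m,n+1}(j') \= \sum a_{m,n}(j),
\end{equation}
where the sum runs over $j \le j'$ with $j' - j \le \min(m, mn - 2j)$. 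These are nonnegative integers, independent of~$q$. Moreover $j' \le j + (mn-2j)/2 + m/2$, which gives $j' \le \lfloor m(n+1)/2 \rfloor$ since the exponent $m(n+1) - 2j'$ must stay nonnegative. This proves~\eqref{id}. The "in particular'' of item~$(i)$ follows by taking degrees via~\eqref{eq:deg_of_T_ell}, since $\deg T_{q^k}(E) = \sigma(q^k)$ and degrees multiply under composition.

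For item~$(ii)$, the cleanest route is to note that the same recursion is satisfied by the formal "$q = 1$ specialization''. Let $V_k$ be formal symbols with the product rule $V_m V_k = \sum_{i=0}^{\min(m,k)} V_{m+k-2i}$; this is the tensor product rule for $\mathrm{SL}_2$ representations, where $V_k$ has dimension $k+1$. Then $V_m^n = \sum_j a_{m,n}(j) V_{mn-2j}$. Applying the dimension function $V_k \mapsto k+1$, which is multiplicative because $\sum_{i=0}^{\min(m,k)} (m+k-2i+1) = (m+1)(k+1)$ by an elementary count, yields~\eqref{suma total}. Equivalently, one can avoid representations and induct directly, using only that elementary identity: assume $\sum_j a_{m,n}(j)(mn+1-2j) = (m+1)^n$ and compute
\begin{equation}
  \sum_{j'} a_{m,n+1}(j')(m(n+1)+1-2j')
  =
  \sum_j a_{m,n}(j) \sum_{i} (m + (mn-2j) - 2i + 1)
  =
  (m+1)\sum_j a_{m,n}(j)(mn-2j+1).
\end{equation}
The final inequality $\sum_j a_{m,n}(j) \le (m+1)^n$ holds because each weight $mn+1-2j \ge 1$.

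The main obstacle is bookkeeping rather than anything deep. One has to check carefully that the index ranges in the recursion are exactly $0 \le i \le \min(m, mn-2j)$, so that the elementary identity $\sum_{i=0}^{\min(a,b)} (a+b+1-2i) = (a+1)(b+1)$ applies. That identity is verified by pairing terms, or by splitting into the cases $a \le b$ and $a > b$.
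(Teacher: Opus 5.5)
Your proof is correct and is essentially the paper's argument. The paper encodes the same Clebsch--Gordan recursion through the Laurent polynomials $L_k(X)=\sum_{j=0}^{k}X^{k-2j}$ (the $\mathrm{SL}_2$ characters) and transfers it to Hecke operators via the algebra isomorphism $L_k\mapsto q^{-k/2}T_{q^k}$. It then gets item~$(ii)$ by evaluating at $X=1$, which is your dimension count $\sum_{i=0}^{\min(a,b)}(a+b+1-2i)=(a+1)(b+1)$; running the recursion directly on the $T_{q^k}$, as you do, simply avoids having to check that this assignment is an algebra isomorphism.
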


The proof of Proposition~\ref{p:canonical_sharpness} relies on the following proposition.

\begin{proposition}
  \label{p:Hecke-rate}
  For all~$j$ and~$m$ in~$\Nz$ and~$n$ in~$\N$ satisfying~$j \le \left\lfloor \frac{mn}{2} \right\rfloor$, let~$a_{m, n}(j)$ be given by Proposition~\ref{p:Hecke-composition}.
  Then, for all~$m$ in~$\N$ and~$q$ in~$\R_{> 1}$,
  \begin{equation}
    \label{eq:29}
    \lim_{n \to \infty} \frac{1}{n} \log \left( \sum_{j=0}^{\left\lfloor \frac{mn}{2} \right\rfloor}a_{m,n}(j) q^j \right)
    =
    \log (q^{\frac{m}{2}}(m + 1)).
  \end{equation}
\end{proposition}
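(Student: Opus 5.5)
The plan is to squeeze the sum between two bounds that are both $q^{mn/2}(m+1)^n$ up to a factor $e^{o(n)}$. The upper bound is immediate. Every index satisfies $j \le \frac{mn}{2}$ and $q>1$, so $q^j \le q^{mn/2}$. Together with Proposition~\ref{p:Hecke-composition}$(ii)$ this gives
\begin{equation}
  \sum_{j=0}^{\lfloor mn/2\rfloor} a_{m,n}(j)q^j \le q^{\frac{mn}{2}}(m+1)^n .
\end{equation}
Hence the $\limsup$ is at most $\log(q^{m/2}(m+1))$. The content lies in the lower bound. For it I will show that almost all of the "mass" $\sum_j a_{m,n}(j)(mn+1-2j)=(m+1)^n$ sits on indices $j$ with $mn-2j\le \varepsilon n$.

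To get there, I first identify the integers $a_{m,n}(j)$ combinatorially. Normalize $X_k \= q^{-k/2}T_{q^k}$. Then \eqref{eq:general} becomes the Clebsch--Gordan (Chebyshev) recursion $X_1X_r=X_{r+1}+X_{r-1}$. Identity \eqref{id} then says that the product $X_m^n$ decomposes as $\sum_j a_{m,n}(j)X_{mn-2j}$, with coefficients independent of $q$. Since the $X_k$ obey the same multiplication rules as the characters $\chi_k$ of $\mathrm{Sym}^k$ of $SU(2)$, and the $\chi_k$ are linearly independent, $a_{m,n}(j)$ is the multiplicity of $\mathrm{Sym}^{mn-2j}$ in $(\mathrm{Sym}^m)^{\otimes n}$. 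This matches the dimension count \eqref{suma total}; I would extract it from the proof of Proposition~\ref{p:Hecke-composition}, or re-derive it by the same induction on $n$. Consequently, if $N(w)$ denotes the number of tuples $(k_1,\dots,k_n)\in\{0,\dots,m\}^n$ with $\sum_i(m-2k_i)=w$, then $a_{m,n}(j)=N(mn-2j)-N(mn-2j+2)\ge 0$. Summing this telescoping difference over $j$ with $mn-2j>\varepsilon n$ gives
\begin{equation}
  \sum_{mn-2j>\varepsilon n} a_{m,n}(j) \le \#\Big\{(k_i)\in\{0,\dots,m\}^n : \textstyle\sum_i (m-2k_i) > \varepsilon n\Big\}.
\end{equation}

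Next comes the large deviation step. The variables $m-2k_i$, with $k_i$ uniform on $\{0,\dots,m\}$, are i.i.d., bounded by $m$, and of mean zero. Hoeffding's inequality therefore bounds the right-hand side by $(m+1)^n e^{-c n}$, with $c=\varepsilon^2/(2m^2)>0$. Each term satisfies $mn+1-2j\le mn+1$, so
\begin{equation}
  \sum_{mn-2j>\varepsilon n} a_{m,n}(j)(mn+1-2j)\le (mn+1)(m+1)^ne^{-cn}.
\end{equation}
By \eqref{suma total}, the complementary range $mn-2j\le\varepsilon n$ then carries mass at least $(m+1)^n(1-(mn+1)e^{-cn})\ge \frac12 (m+1)^n$ for $n$ large. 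On that range $q^j\ge q^{(mn-\varepsilon n)/2}$, and again $mn+1-2j\le mn+1$. Hence
\begin{equation}
  \sum_j a_{m,n}(j)q^j \ge \frac{q^{\frac{(m-\varepsilon)n}{2}}(m+1)^n}{2(mn+1)}.
\end{equation}
This yields $\liminf \ge \log(q^{(m-\varepsilon)/2}(m+1))$, and letting $\varepsilon\to0$ completes the argument.

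The main obstacle I expect is the identification $a_{m,n}(j)=N(mn-2j)-N(mn-2j+2)$, in particular its nonnegativity and the telescoping bound. Nonnegativity of the $a_{m,n}(j)$ is already given by Proposition~\ref{p:Hecke-composition}. Even so, I must make sure the coefficients produced there coincide with the $SU(2)$ multiplicities, which follows from linear independence of the $X_k$ (equivalently of the $T_{q^k}$). Once that is in place, the rest is the elementary concentration estimate above.
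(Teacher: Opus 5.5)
Your proof is correct and follows essentially the paper's route. The upper bound is the same trivial one. For the lower bound you use the identity $\sum_{j\le t}a_{m,n}(j)=N(mn-2t)$, which is the paper's \eqref{eq:40} and can be read off directly from Lemma~\ref{l:power-Lm}; so your detour through $q^{-k/2}T_{q^k}$, $SU(2)$ characters and linear independence of Hecke operators is not needed. You combine it, as the paper does, with an exponential upper-tail bound for a sum of i.i.d.\ variables uniform on $\{m-2k : 0\le k\le m\}$.

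The only differences are technical. You use Hoeffding's inequality in place of Cram\'er's theorem. You also locate the mass near $j\approx mn/2$ through the weighted identity \eqref{suma total}, instead of the paper's two-sided concentration step \eqref{eq:46}--\eqref{eq:47}. This makes your version slightly shorter and fully explicit.
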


The proof of Propositions~\ref{p:Hecke-composition} and~\ref{p:Hecke-rate} rely on a couple of lemmas.
For every~$n$ in~$\Nz$, define the Laurent polynomial~$L_n(X)$ in~$\Z[X,X^{-1}]$ by
\begin{equation}
  \label{eq:30}
  L_n(X)
  \=
  \sum_{j=0}^n X^{n-2j}.
\end{equation}
A simple computation yields the following formula, valid for all integers~$m$ and~$n$ satisfying $0\le m\le n$,
\begin{equation}
  \label{eq:XmLn}
  (X^{-m}+X^m)L_n(X)
  =
  L_{n+m}(X)+L_{n-m}(X).
\end{equation}

\begin{lemma}
  \label{l:mult-Lm}
  For all~$m$ and~$n$ in~$\Nz$,
  $$L_m(X)L_n(X)
  =
  \sum_{j=0}^{\min\{m,n\}}L_{m+n-2j}(X).$$
\end{lemma}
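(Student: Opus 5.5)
The plan is to induct on $\min\{m,n\}$, using the recursion \eqref{eq:XmLn}. By the symmetry $L_m(X)L_n(X)=L_n(X)L_m(X)$ we may assume $m\le n$. The base cases are immediate: $L_0(X)=1$ gives $L_0L_n=L_n$, and $L_1(X)=X+X^{-1}$ together with \eqref{eq:XmLn} (taking $m=1$, and $n\ge1$) gives
\[
L_1L_n=L_{n+1}+L_{n-1}.
\]
This is the claimed formula with $j=0,1$.

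For the inductive step, take $2\le m\le n$. The three-term recursion is
\[
L_m=L_1L_{m-1}-L_{m-2},
\]
which is \eqref{eq:XmLn} applied with $(1,m-1)$ in place of $(m,n)$. Multiply it by $L_n$ and use associativity:
\[
L_mL_n=L_1\,(L_{m-1}L_n)-L_{m-2}L_n.
\]
Apply the induction hypothesis to both products on the right. Since $m-1\le n$ and $m-2\le n$, this gives
\[
L_1\sum_{j=0}^{m-1}L_{m-1+n-2j}-\sum_{j=0}^{m-2}L_{m-2+n-2j}.
\]

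Next expand each term $L_1L_{k}$ in the first sum with $k=m-1+n-2j$. Since $k\ge n-m+1\ge1$, the base case gives $L_1L_k=L_{k+1}+L_{k-1}$. The first sum therefore becomes
\[
\sum_{j=0}^{m-1}L_{m+n-2j}+\sum_{j=0}^{m-1}L_{m+n-2-2j}.
\]
The second of these sums equals $\sum_{j=0}^{m-2}L_{m+n-2-2j}+L_{n-m}$. Its first part cancels exactly with the subtracted sum. What remains is
\[
\sum_{j=0}^{m-1}L_{m+n-2j}+L_{n-m}=\sum_{j=0}^{m}L_{m+n-2j},
\]
as required.

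The only point needing care is bookkeeping of indices, namely ensuring every subscript used in \eqref{eq:XmLn} is nonnegative. This is exactly why we assume $m\le n$.

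An alternative check, which I would mention only as a remark, is the closed form $L_n(X)=(X^{n+1}-X^{-n-1})/(X-X^{-1})$. This identifies the statement with the Clebsch--Gordan rule for characters of $\SL_2$.
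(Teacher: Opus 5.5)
Your proof is correct and follows essentially the same route as the paper: induction on the smaller index, with base cases $m\in\{0,1\}$ and the inductive step driven by \eqref{eq:XmLn}. The only difference is in that step. The paper passes from $m_0$ to $m_0+2$ using $L_{m_0+2}=X^{m_0+2}+X^{-(m_0+2)}+L_{m_0}$ together with \eqref{eq:XmLn} at exponent $m_0+2$, so the two new terms $L_{n+m_0+2}$ and $L_{n-m_0-2}$ are simply appended to the old sum with no cancellation. You instead use only the exponent-$1$ case of \eqref{eq:XmLn} (the three-term recurrence), which requires strong induction and a telescoping cancellation.
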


\begin{proof}
  The proof restricts to ${m \le n}$, by symmetry, and it proceeds by induction on~$m$.
  The identity holds trivially when $m=0$.
  When ${m = 1}$ and ${n \ge 1}$, \eqref{eq:XmLn} yields
  \begin{equation}
    \label{eq:31}
    L_1(X)L_n(X)
    =
    (X+X^{-1})L_n(X)=L_{n+1}(X)+L_{n-1}(X).
  \end{equation}
  Hence, the desired identity holds whenever~$m$ belongs to~$\{0,1\}$.
  Suppose that the result holds for a given~$m_0$ in~$\Nz$ and for every integer~$n$ with $n\ge m_0$.
  Then, by~\eqref{eq:XmLn}, for every integer $n$ with $n\ge m_0+2$,
  \begin{equation}
    \label{eq:32}
    \begin{split}
      L_{m_0+2}(X)L_n(X)
      & =
        (X^{-m_0-2}+X^{m_0+2}+L_{m_0}(X))L_n(X)
      \\ & =
           (X^{-m_0-2}+X^{m_0+2})L_n(X)+L_{m_0}(X)L_n(X)
      \\ & =
           L_{n+m_0+2}(X)+L_{n-m_0-2}(X)+\sum_{j=0}^{m_0}L_{m_0+n-2j}
      \\ & =
           \sum_{j=0}^{m_0+2}L_{m_0+2+n-2j}.
    \end{split}
  \end{equation}
  This proves the desired identity with ${m = m_0 + 2}$ and completes the proof of the lemma.
\end{proof}

\begin{lemma}
  \label{l:power-Lm}
  For all~$m$ in~$\Nz$ and~$n$ in~$\N$, there are~$a_{m,n}(0)$, \ldots, $a_{m,n}(\left\lfloor \frac{mn}{2} \right\rfloor)$ in~$\Nz$ satisfying,
  \begin{equation}
    \label{eq:33}
    L_m(X)^n
    =
    \sum_{j=0}^{\left\lfloor \frac{mn}{2} \right\rfloor}a_{m,n}(j)L_{mn-2j}(X).
  \end{equation}
\end{lemma}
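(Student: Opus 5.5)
The plan is to argue by induction on~$n$, with~$m$ fixed, using Lemma~\ref{l:mult-Lm} to absorb each new factor of~$L_m(X)$. For ${n = 1}$ we put ${a_{m,1}(0) \= 1}$ and ${a_{m,1}(j) \= 0}$ for ${1 \le j \le \lfloor \frac{m}{2} \rfloor}$, so~\eqref{eq:33} holds trivially. The case ${m = 0}$ is also immediate: ${L_0 = 1}$, so the sum has the single term ${a_{0,n}(0) = 1}$.

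For the inductive step, assume~\eqref{eq:33} holds for some~$n$. Then
\begin{equation}
  L_m(X)^{n+1}
  =
  \sum_{j=0}^{\lfloor \frac{mn}{2} \rfloor} a_{m,n}(j) L_m(X) L_{mn-2j}(X).
\end{equation}
Apply Lemma~\ref{l:mult-Lm} to each product:
\begin{equation}
  L_m(X) L_{mn-2j}(X)
  =
  \sum_{i=0}^{\min\{m, mn-2j\}} L_{m(n+1)-2(i+j)}(X).
\end{equation}
Each term has the form~$L_{m(n+1)-2k}$ with ${k = i + j}$. Grouping by~$k$, we define~$a_{m,n+1}(k)$ as the number of pairs~$(i,j)$ with ${i + j = k}$, ${0 \le i \le \min\{m, mn-2j\}}$, weighted by~$a_{m,n}(j)$. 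Since these are sums of products of nonnegative integers, they lie in~$\Nz$.

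The only point needing care is the range of~$k$. The constraint ${i \le mn - 2j}$ gives ${2k = 2i + 2j \le i + mn \le m + mn}$, so ${k \le \lfloor \frac{m(n+1)}{2} \rfloor}$. Consequently every index~$m(n+1) - 2k$ that appears is nonnegative, and no term falls outside the prescribed index set. For any~$k$ in that range that receives no contribution we set ${a_{m,n+1}(k) \= 0}$. This completes the induction.

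I expect no real obstacle here. The bookkeeping of the upper bound~$\lfloor \frac{m(n+1)}{2} \rfloor$ is the only step that needs checking, and it follows from the inequality above.
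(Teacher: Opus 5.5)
Your proof is correct and is essentially the paper's argument: induction on~$n$ with the same base case, expanding $L_m(X)\cdot L_m(X)^n$ termwise with Lemma~\ref{l:mult-Lm} and regrouping by $k=i+j$, which yields the same nonnegative integer coefficients $a_{m,n+1}(k)$. Your explicit check that $2k \le i+mn \le m(n+1)$ justifies the upper summation limit $\lfloor \frac{m(n+1)}{2} \rfloor$, which the paper leaves implicit.
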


\begin{proof}
  For ${n = 1}$, define $a_{m,1}(j)$ as~$1$ if $j=0$, and $0$ if $j>0$.
  Then, \eqref{eq:33} holds trivially for $n=1$.
  Suppose that, for a given~$n$ in~$\N$ and every~$j$ in~$\left\{0,\ldots,\left\lfloor \frac{mn}{2} \right\rfloor \right\}$, there is~$a_{m,n}(j)$ such that~\eqref{eq:33} holds.
  Lemma~\ref{l:mult-Lm} yields,
  \begin{equation}
    \label{eq:34}
    \begin{split}
      L_m(X)^{n+1}
      &=
        L_m(X) L_m(X)^n
      \\ & =
           L_m(X) \sum_{r=0}^{\left\lfloor \frac{mn}{2} \right\rfloor}a_{m,n}(r)L_{mn-2r}(X)
      \\ & =
           \sum_{r=0}^{\left\lfloor \frac{mn}{2} \right\rfloor}a_{m,n}(r) \sum_{k=0}^{\min\{mn-2r,m\}}L_{m(n+1)-2(r+k)}(X)
      \\ & =
           \sum_{j=0}^{\left\lfloor \frac{m(n+1)}{2} \right\rfloor}L_{m(n+1)-2j}(X) \left(\sum^{\left\lfloor \frac{mn}{2} \right\rfloor}_{r=0}a_{m,n}(r)b_{m, n}(j,r) \right),
    \end{split}
  \end{equation}
  where $b_{m, n}(j,r)$ is defined as~$1$ if $0\le j-r\le \min\{mn-2r,m\}$, and~$0$ otherwise.
  This proves~\eqref{eq:33} with~$n$ replaced by~$n+1$, and
  $$a_{m,n+1}(j)
  \=
  \sum^{\left\lfloor \frac{mn}{2} \right\rfloor}_{r=0}a_{m,n}(r)b_{m, n}(j,r).$$
  The lemma follows by induction.
\end{proof}

\begin{remark}
  For every~$n$ in~$\Nz$, let $U_n(X)$ denote the $n$-th Chebyshev polynomial of the second kind.
  These polynomials can be defined recursively by setting ${U_0(X) \= 1}$, ${U_1(X) \= 2X}$, and ${U_{n+1}(X) \= 2XU_n(X) - U_{n-1}(X)}$ for ${n\geq 1}$.
  It is easy to see that, for every~$n$ in~$\Nz$, the equality $L_n(X)=U_n\left(\frac{X+X^{-1}}{2} \right)$ holds.
  Moreover, Lemmas~\ref{l:mult-Lm} and~\ref{l:power-Lm} are still valid when replacing each~$L_n(X)$ with~$U_n(X)$ and can therefore be seen as properties of Chebyshev polynomial of the second kind.
\end{remark}

\begin{proof}[Proof of Proposition~\ref{p:Hecke-composition}]
  To prove item~$(i)$, note that~\eqref{eq:general} implies that, for all~$m$ and~$n$ in~$\Nz$,
  $$T_{q^m} \circ T_{q^n}=\sum_{j=0}^{\min\{m,n\}}q^jT_{q^{m+n-2j}}.$$
  Together with Lemma~\ref{l:mult-Lm}, this implies that the assignation~$L_m(X) \mapsto q^{-m/2}T_{q^m}$ extends to an isomorphism of $\R$-algebras from $\R[L_m \: m\in \Nz]$ to $\R[T_{q^m} \: m\in \Nz]$.
  Hence,~\eqref{id} follows directly from Lemma~\ref{l:power-Lm} by applying this isomorphism.
  Evaluating~\eqref{id} at a point~$E$ of~$\Ell(\K)$, taking degrees on both sides, and using~\eqref{eq:deg_of_T_ell}, yields the last equality in item~$(i)$.

  Evaluating $X=1$ in Lemma~\ref{l:power-Lm} yields~\eqref{suma total}.
  The inequality in item~$(ii)$ follows from the nonnegativity of the coefficients involved.
\end{proof}

\begin{proof}[Proof of Proposition~\ref{p:Hecke-rate}]
  By Proposition~\ref{p:Hecke-composition}($ii$),
  \begin{equation}
    \label{eq:35}
    \sum_{j=0}^{\left\lfloor \frac{mn}{2} \right\rfloor}a_{m,n}(j) q^j
    \le
    q^{\left\lfloor \frac{mn}{2} \right\rfloor} \sum_{j=0}^{\left\lfloor \frac{mn}{2} \right\rfloor}a_{m,n}(j)
    \le
    q^{\left\lfloor \frac{mn}{2} \right\rfloor} (m + 1)^n.
  \end{equation}
  Thus,
  \begin{equation}
    \label{eq:36}
    \limsup_{n \to \infty} \frac{1}{n} \log \left( \sum_{j=0}^{\left\lfloor \frac{mn}{2} \right\rfloor}a_{m,n}(j) q^j \right)
    \le
    \log (q^{\frac{m}{2}} (m + 1)).
  \end{equation}

  Let~$(X_n)_{n = 0}^\infty$ be a sequence of i.i.d. random variables with common distribution
  \begin{equation}
    \label{eq:37}
    \frac{1}{m + 1} \sum_{j = 0}^m \delta_{m - 2j},
  \end{equation}
  and denote by~$\P$ the underlying probability.
  The next step is to prove that, for all~$n$ in~$\N$ and~$k$ in~$\Z$, the coefficient of~$X^k$ in~$L(X)^n$ equals
  \begin{equation}
    \label{eq:38}
    (m + 1)^n \P(X_0 + \cdots + X_{n - 1} = k).
  \end{equation}
  For ${n = 1}$, this follows from the definitions.
  The general case follows by induction, using that, for all~$n$ in~$\N$ and~$k$ in~$\Z$,
  \begin{equation}
    \label{eq:39}
    \P(X_0 + \cdots + X_n = k)
    =
    \frac{1}{m + 1} \sum_{j = 0}^{m} \P(X_0 + \cdots + X_{n - 1} = k - 2j).
  \end{equation}
  It follows that, for every~$n$ in~$\N$, the distribution of ${X_0 + \cdots + X_{n - 1}}$ is supported on ${\{ mn - 2t \: t \in \{0, \ldots, mn \} \}}$, and, by the symmetry of~\eqref{eq:37} and Lemma~\ref{l:power-Lm}, that, for every~$t$ in~$\left\{0, \ldots, \left\lfloor \frac{mn}{2} \right\rfloor \right\}$,
  \begin{multline}
    \label{eq:40}
    \P(X_0 + \cdots + X_{n - 1} = mn - 2t)
    =
    \P(X_0 + \cdots + X_{n - 1} = -(mn - 2t))
    \\ =
    \frac{1}{(m + 1)^n} \sum_{j = 0}^t a_{m, n}(j).
  \end{multline}

  Denote by~$\E[\exp(\lambda X_0)]$ the expectation of the random variable~$\exp(\lambda X_0)$, let~$\Lambda$ be the \emph{logarithmic moment generating function} associated with~\eqref{eq:37}, defined by
  \begin{equation}
    \label{eq:41}
    \Lambda(\lambda)
    \=
    \log \E[\exp(\lambda X_0)],
  \end{equation}
  and let ${\Lambda^* \: \R \to \R \cup \{ +\infty \}}$ be the \emph{Fenchel--Legendre transform of~$\Lambda$}, defined by
  \begin{equation}
    \label{eq:42}
    \Lambda^*(x)
    \=
    \sup_{\lambda \in \R} \{ \lambda x - \Lambda(\lambda) \}.
  \end{equation}
  A direct computation yields
  \begin{equation}
    \label{eq:43}
    \Lambda(\lambda)
    =
    \log \left( \frac{1}{m + 1} \sum_{j = 0}^m \exp(\lambda(m - 2j)) \right).
  \end{equation}
  This implies that~$\Lambda$ and hence~$\Lambda^*$ are even functions, and ${\Lambda'(\R) \supseteq ]-m, m[}$.
  It follows that~$\Lambda$ is strictly convex and continuous on~$]-m, m[$ and satisfies ${\Lambda^*(0) = 0}$, see, \emph{e.g.}, \cite[Lemma~2.2.5 and Exercise~2.2.24]{DemZei10}.
  Thus, Cram{\'e}r's theorem yields, for every~$\varepsilon$ in~$]0, m[$,
  \begin{equation}
    \label{eq:44}
    \limsup_{n \to \infty} \frac{1}{n} \log \P(|X_0 + \cdots + X_{n - 1}| \ge \varepsilon n)
    \le
    -\Lambda^*(\varepsilon)
    <
    0,
  \end{equation}
  see, \emph{e.g.}, \cite[Theorem~2.2.3]{DemZei10}.
  Combined with~\eqref{eq:40}, this implies
  \begin{multline}
    \label{eq:45}
    \limsup_{n \to \infty} \frac{1}{n} \log \left( \frac{1}{(m + 1)^n} \sum_{j = 0}^{\left\lfloor \frac{mn}{2} \right\rfloor - \left\lceil \frac{\varepsilon n}{2} \right\rceil} a_{m, n}(j) \right)
    \\ =
    \limsup_{n \to \infty} \frac{1}{n} \log \P \left( X_0 + \cdots + X_{n - 1} = mn - 2\left\lfloor \frac{mn}{2} \right\rfloor + 2\left\lceil \frac{\varepsilon n}{2} \right\rceil \right)
    \\ \le
    \limsup_{n \to \infty} \frac{1}{n} \log \P(|X_0 + \cdots + X_{n - 1}| \ge \varepsilon n)
    \le
    -\Lambda^*(\varepsilon).
  \end{multline}
  Moreover, fixing~$\varepsilon$, \eqref{eq:44} yields, for every sufficiently large~$n$,
  \begin{equation}
    \label{eq:46}
    \P(|X_0 + \cdots + X_{n - 1}| < \varepsilon n)
    =
    1 - \P(|X_0 + \cdots + X_{n - 1}| \ge \varepsilon n)
    \ge
    1 - \exp(-n\Lambda^*(\varepsilon))
    \ge
    \frac{1}{2}.
  \end{equation}
  Combined with~\eqref{eq:40}, this leads, for every sufficiently large~$n$, to
  \begin{equation}
    \label{eq:47}
    \frac{1}{2}
    \le
    \frac{2}{(m + 1)^n} \sum_{t = \left\lfloor \frac{mn - \varepsilon n}{2} \right\rfloor}^{\left\lfloor \frac{mn}{2} \right\rfloor} \sum_{j = 0}^t a_{m, n}(j)
    \le
    \frac{\varepsilon n + 4}{(m + 1)^n} \sum_{j = 0}^{\left\lfloor \frac{mn}{2} \right\rfloor} a_{m, n}(j).
  \end{equation}
  Combined with~\eqref{eq:45}, this implies
  \begin{multline}
    \label{eq:48}
    \liminf_{n \to \infty} \frac{1}{n} \log \left( \sum_{j = \left\lfloor \frac{mn}{2} \right\rfloor - \left\lceil \frac{\varepsilon n}{2} \right\rceil}^{\left\lfloor \frac{mn}{2} \right\rfloor} a_{m, n}(j) \right)
    \\
    \begin{aligned}
      & =
        \liminf_{n \to \infty} \frac{1}{n} \log \left( \sum_{j = 0}^{\left\lfloor \frac{mn}{2} \right\rfloor} a_{m, n}(j) - \sum_{j = 0}^{\left\lfloor \frac{mn}{2} \right\rfloor - \left\lceil \frac{\varepsilon n}{2} \right\rceil - 1} a_{m, n}(j) \right)
      \\ & \ge
           \liminf_{n \to \infty} \frac{1}{n} \log \left( \frac{(m + 1)^n}{2(\varepsilon n + 4)} - (m + 1)^n \exp(-n\Lambda^*(\varepsilon)) \right)
      \\ & =
           \log (m + 1).
    \end{aligned}
  \end{multline}
  Thus,
  \begin{equation}
    \label{eq:49}
    \begin{split}
      \liminf_{n \to \infty} \frac{1}{n} \log \left( \sum_{j=0}^{\left\lfloor \frac{mn}{2} \right\rfloor}a_{m,n}(j) q^j \right)
      & \ge
        \liminf_{n \to \infty} \frac{1}{n} \log \left( \sum_{j = \left\lfloor \frac{mn}{2} \right\rfloor - \left\lceil \frac{\varepsilon n}{2} \right\rceil}^{\left\lfloor \frac{mn}{2} \right\rfloor} a_{m, n}(j) q^j \right)
      \\ & \ge
           \liminf_{n \to \infty} \frac{1}{n} \log \left( q^{\left\lfloor \frac{mn}{2} \right\rfloor - \left\lceil \frac{\varepsilon n}{2} \right\rceil} \sum_{j = \left\lfloor \frac{mn}{2} \right\rfloor - \left\lceil \frac{\varepsilon n}{2} \right\rceil}^{\left\lfloor \frac{mn}{2} \right\rfloor} a_{m, n}(j) \right)
      \\ & =
           \log (q^{\frac{m - \varepsilon}{2}}) + \liminf_{n \to \infty} \frac{1}{n} \log \left( \sum_{j = \left\lfloor \frac{mn}{2} \right\rfloor - \left\lceil \frac{\varepsilon n}{2} \right\rceil}^{\left\lfloor \frac{mn}{2} \right\rfloor} a_{m, n}(j) \right)
      \\ & \ge
           \log (q^{\frac{m - \varepsilon}{2}} (m + 1)).
    \end{split}
  \end{equation}
  Since this holds for every~$\varepsilon$ in~$]0, m[$, this completes the proof of the proposition.
\end{proof}

\begin{remark}
  Proposition~\ref{p:Hecke-rate} and the key estimate~\eqref{eq:48} in particular can be proved using the asymptotic behavior of the generalized binomial coefficients, see, \emph{e.g.}, \cite{Sac96}, thus avoiding Cram{\'e}r's Theorem.
\end{remark}

\section{The complex setting}
\label{s:complejo}

This section proves Theorem~\ref{t:equid_complex} in Section~\ref{ss:Equidistribucion} and derives Corollary~\ref{c:equid_complex_bis}.
As in the proof of \cite[\emph{Th{\'e}or{\`e}me}~2.1]{CloUll04}, the proof of Theorem~\ref{t:equid_complex} relies on the spectral decomposition of~$L^2(\Ell(\C),\mhyp)$ with respect to the hyperbolic Laplace operator~$\Delta$, see, \emph{e.g.}, \cite[(3.16), p.~61, and Theorems~4.7 and~7.3]{Iwa02}.
More precisely, consider the orthogonal decomposition
\begin{equation}
  \label{eq:50}
  L^2(\Ell(\C),\mhyp)
  =
  \sE_\infty \oplus \bigoplus_{k=0}^\infty\C\varphi_k,
\end{equation}
where~$\sE_\infty$ is the closed subspace of~$L^2_0(\Ell(\C),\mhyp)$ corresponding to the continuous part of the spectrum of~$\Delta$ and~$\left( \varphi_k\right)_{k=0}^\infty$ is a maximal orthogonal system of eigenfunctions of~$\Delta$.
To describe~$\sE_\infty$ more precisely, denote by~$\SL_2(\Z)_\infty$ the stabilizer subgroup of~$\infty$ in~$\SL_2(\Z)$ and, for each~$s$ in~$\C$ satisfying ${\Re(s) > 1}$, by~$E_\infty(\cdot, s)$ the \emph{non-holomorphic Eisenstein series} attached to the cusp~$\infty$ of~$\SL_2(\Z)$ defined, for every~$z$ in~$\H$, by
\begin{equation}
  \label{eq:51}
  E_\infty(z,s)
  \=
  \sum_{\gamma \in \SL_2(\Z)_\infty \backslash \SL_2(\Z)} \Im(\gamma z)^s.
\end{equation}
For each~$z$ in~$\H$, the function~$E_\infty(z, \cdot)$ extends to a meromorphic function defined on~$\C$ that is holomorphic on a neighborhood of the half-plane ${\{s \in \C \: \Re(s) \ge \frac{1}{2} \}}$, except for a simple pole at ${s = 1}$.
Denote by~$C_0^\infty(\Rp)$ the space of~$C^\infty$ complex valued functions defined on~$\Rp$ that are compactly supported, and by~$L^2(\Rp)$ the space of complex valued functions defined on~$\Rp$ that are square integrable with respect to the Lebesgue measure.
Note that~$C_0^\infty(\Rp)$ is a dense subspace of~$L^2(\Rp)$.
The \emph{Eisenstein transform} is the map ${C_0^\infty(\Rp) \to L^2(\Ell(\C),\mhyp)}$ given by
\begin{equation}
  \label{eq:52}
  h \mapsto \frac{1}{4\pi} \int_0^\infty h(t)E_\infty\left(\cdot, \tfrac{1}{2}+it\right) \dd t.
\end{equation}
It is an isometric embedding, see, \emph{e.g.}, \cite[Corollary to Proposition~7.1]{Iwa02}.
Denote by ${\cE_\infty \: L^2(\Rp) \to L^2(\Ell(\C),\mhyp)}$ the isometric embedding extending it.
Then, the space~$\sE_\infty$ appearing in~\eqref{eq:50} is given by ${\sE_\infty = \cE_\infty(L^2(\Rp))}$.

\begin{proof}[Proof of Theorem~\ref{t:equid_complex}]
  The proof begins computing the norm of~$\bT_\ell$ restricted to~$\sE_\infty$, denoted by~$\|\bT_\ell\|_{\sE_\infty}$.
  Put, for all~$s$ in~$\C$ and~$n$ in~$\N$,
  \begin{equation}
    \label{eq:53}
    \sigma_s(n)
    \=
    \sum_{d\in \N, d\mid n}d^s,
  \end{equation}
  so ${\sigma_0 = d}$ and ${\sigma_1 = \sigma}$.
  For every~$s$ in~$\C$,
  \begin{equation}
    \label{eq:54}
    T_\ell E_\infty(\cdot,s)
    =
    \ell^s\sigma_{1-2s}(\ell) E_\infty(\cdot,s),
  \end{equation}
  see, \emph{e.g.}, \cite[p.~117]{Iwa02}.
  Denoting by~${M_\ell \: L^2(\Rp) \to L^2(\Rp)}$ the operator defined by
  \begin{equation}
    \label{eq:55}
    M_\ell(h)(t)
    \=
    \frac{\ell^{\frac{1}{2}+it} \sigma_{-2it}(\ell)}{\sigma(\ell)} h(t),
  \end{equation}
  it follows that ${\bT_\ell \circ \cE_\infty = \cE_\infty \circ M_\ell}$ on~$L^2(\Rp)$ and, since~$\cE_\infty$ is an isometric embedding, ${\|\bT_\ell\|_{\sE_\infty} = \|M_\ell\|_{L^2(\Rp)}}$.
  Since for every~$t$ in~$\Rp$ the inequality ${|\sigma_{-2it}(\ell)| \le d(\ell)}$ holds, ${\|M_\ell\|_{L^2(\Rp)} \le \varrho(\ell)}$.
  To prove the reverse inequality, denote, for each~$\varepsilon$ in~$\Rp$, by~$\bfone_{]0, \varepsilon[}$ the indicator function of~$]0, \varepsilon[$.
  Then, ${\| \varepsilon^{-\frac{1}{2}} \bfone_{]0, \varepsilon[} \|_{L^2(\Rp)} = 1}$ and
  \begin{equation}
    \label{eq:56}
    \|M_\ell\|_{L^2(\Rp)}
    \ge
    \lim_{\varepsilon \to 0^+} \| M_\ell(\varepsilon^{-\frac{1}{2}} \bfone_{]0, \varepsilon[}) \|_{L^2(\Rp)}
    =
    \frac{\ell^{\frac{1}{2}} \sigma_0(\ell)}{\sigma(\ell)}
    =
    \varrho(\ell).
  \end{equation}
  This completes the proof of
  \begin{equation}
    \label{eq:op_norm_conti_spectrum}
    \|\bT_\ell\|_{\sE_\infty}
    =
    \|M_\ell\|_{\sE_\infty}
    =
    \varrho(\ell).
  \end{equation}

  Note that~\eqref{eq:op_norm_conti_spectrum} implies the first inequality in~\eqref{eq:7}.
  To complete the proof of~\eqref{eq:7}, choose the maximal orthogonal system $\left( \varphi_k \right)_{k=0}^\infty$ of eigenfunctions appearing in~\eqref{eq:50} of~$\Delta$ so that~$\varphi_0$ is the constant function equal to~$1$ and such that, for each~$k$ in~$\Nz$, the function~$\varphi_k$ is also an eigenfunction of~$\bT_\ell$, see, \emph{e.g.}, \cite[Section~8.5]{Iwa02}.
  Then, the eigenvalue~$\alpha_k(\ell)$ of~$\varphi_k$ for~$\bT_\ell$ satisfies ${|\alpha_k(\ell)| \le \varrho(\ell) \ell^{\frac{7}{64}}}$, see \cite[first part of Proposition~2 in Appendix~2]{Kim03}.
  Together with~\eqref{eq:op_norm_conti_spectrum}, this yields the second inequality in~\eqref{eq:7}.
  In particular, $\|\bT_\ell\|_0\to 0$ as ${\ell \to \infty}$.
  Combined with Proposition~\ref{p:criterion_convergence_iterates}$(ii)$, this implies the third inequality in~\eqref{eq:7} and completes the proof of~\eqref{eq:7}.

  To prove the last assertion of the theorem, let~$f$ be in~$D(\Ell(\C))$ and~$K$ a compact subset of~$\Ell(\C)$.
  Let~$(A_k)_{k = 0}^\infty$ be the sequence in~$\C$ and~$h$ in~$L^2(\Rp)$ such that
  \begin{equation}
    \label{eq:spectral_decom_f}
    f
    =
    \sum_{k= 0}^\infty A_k \varphi_k + \cE_\infty(h).
  \end{equation}
  The latter series converges absolutely on~$K$, see, \emph{e.g.}, \cite[Theorems~4.7 and~7.3]{Iwa02}.
  In particular,
  \begin{equation}
    \label{eq:57}
    \sup_{z \in K} \sum_{k = 1}^\infty |A_k \varphi_k(z)| + |\cE_\infty(h)(z)|
  \end{equation}
  is finite.
  Denote this constant by~$C_{f, K}$.
  Since~$\varphi_0$ is constant equal to~$1$, the equalities ${A_0 = \int f \dd \mhyp}$ and ${\alpha_0(\ell) = 1}$ hold.
  Using ${\varrho(\ell) \le \on{\bT_\ell}}$ from~\eqref{eq:7} and, for every~$k$ in~$\N$, the elementary inequality ${\alpha_k(\ell) \le \on{\bT_\ell}}$, yields, for all~$n$ in~$\N$ and~$z$ in~$K$,
  \begin{equation}
    \label{eq:58}
    \begin{split}
      \left|\bT_\ell^n f(z)-\int f \dd \mhyp\right|
      & =
        \left| \sum_{k = 1}^\infty A_k \alpha_k(\ell)^n\varphi_k(z) + \cE_\infty(M_\ell^n(h))(z) \right|
      \\ & \le
           \sum_{k = 1}^\infty |\alpha_k(\ell)|^n |A_k \varphi_k(z)| + \sup_{t \in \Rp} \left| \frac{\ell^{\frac{1}{2}+it} \sigma_{-2it}(\ell)}{\sigma(\ell)} \right|^n |\cE_\infty(h)(z)|.
      \\ & \le
           C_{f, K} \on{\bT_\ell}^n.
           \qedhere
    \end{split}
  \end{equation}
\end{proof}

\begin{proof}[Proof of Corollary~\ref{c:equid_complex_bis}]
  Since~$\bT_\ell$ fixes each constant function, the Cauchy--Schwarz inequality implies, for every~$n$ in~$\N$,
  \begin{multline}
    \label{eq:59}
    |\sC_n(f, g)|
    =
    \left| \int \left( \bT_\ell^n f - \int f \dd \mhyp \right) g \dd \mhyp \right|
    \le
    \left\| \bT_\ell^n \left( f - \int f \dd \mhyp \right) \right\|_2 \| g \|_2
    \\ \le
    \on{\bT_\ell^n} \left\| f- \int f \dd \mhyp \right\|_2 \| g \|_2
    \le
    \on{\bT_\ell}^n \norm{f}_2 \| g \|_2.
  \end{multline}
  Since ${\on{\bT_\ell} < 1}$ by~\eqref{eq:7} in Theorem~\ref{t:equid_complex}, it follows that~$(|\sC_n(f, g)|)_{n = 1}^\infty$ decays exponentially.
\end{proof}

\section{$p$-Adic convergence towards the Gauss point}
\label{s:Berkovich}

This section proves Theorem~\ref{t:Equidistribucion}$(i)$ in Section~\ref{ss:Equidistribucion} in its uniform form given by Theorem~\ref{t:canonical_rate} below and shows, in Proposition~\ref{p:canonical_sharpness}, that the exponential convergence rate is sharp.
The proof of Theorem~\ref{t:canonical_rate} relies on results of \cite{HerMenRivI}.
The proof of Proposition~\ref{p:canonical_sharpness} relies on Proposition~\ref{p:Hecke-rate}, which in turn relies on a large deviation estimate.

Throughout this section, fix a prime number~$p$.
Identify the residue field of~$\Cp$ with an algebraic closure~$\Fpalg$ of the field with~$p$ elements~$\Fp$.
Recall that the endomorphism ring of an elliptic curve over~$\Fpalg$ is isomorphic to an order in either a quadratic imaginary extension of~$\Q$ or a quaternion algebra over~$\Q$.
In the former case, the corresponding elliptic curve class is \emph{ordinary}, whereas in the latter it is \emph{supersingular}.

Denote by~$\cO_p$ the ring of integers of~$\Cp$ and ${\pi \: \cO_p \to \Fpalg}$ the reduction map.
An elliptic curve class~$E$ has \emph{good reduction} if there is a representative elliptic curve defined over~$\cO_p$ whose reduction is smooth.
In this case, the reduction is an elliptic curve defined over~$\Fpalg$ whose class~$\tE$ only depends on~$E$ and is the \emph{reduction of~$E$}.
Moreover, $E$ has \emph{ordinary} (resp. \emph{supersingular}) \emph{reduction} if~$\tE$ is ordinary (resp. supersingular).
An elliptic curve class~$E$ over~$\Cp$ has good reduction precisely when its $j$-invariant~$j(E)$ belongs to~$\cO_p$, and otherwise~$E$ has \emph{bad reduction}.
The moduli space~$\Ell(\Cp)$ is thus partitioned into three pairwise disjoint sets: The \emph{bad}, \emph{ordinary}, and \emph{supersingular reduction loci}, denoted by~$\Bad$, $\Ord$, and~$\Sups$, respectively, as in the introduction.
Using ${j \: \Ell(\Cp) \to \Cp}$ to identify~$\Ell(\Cp)$ and~$\Cp$, yields the partition
\begin{displaymath}
  \cO_p
  =
  \Ord \sqcup \Sups.
\end{displaymath}
Denoting by~$\tSups$ the finite subset of~$\Ell(\Fpalg)$ of supersingular classes, the equality ${\Sups = \pi^{-1}(\tSups)}$ holds.

Thus, ${\Sups}$ is a finite union of residue discs of~$\cO_p$ and~$\Ord$ a union of infinitely many residue discs of~$\cO_p$.

Denote by~$\AKber$ the Berkovich affine line over~$\Cp$.
As a set, $\AKber$ is the collection of multiplicative seminorms on the polynomial ring~$\Cp[X]$ that take values in~$\Rzp$ and extend the $p$\nobreakdash-adic norm~$| \cdot |_p$ on~$\Cp$.
The \emph{canonical} or \emph{Gauss point}~$\xcan$ of~$\AKber$, is the Gauss norm
\begin{equation}
  \label{eq:60}
  \sum_{n=0}^N a_nX^n
  \mapsto
  \max \{|a_n|_p \: n \in \{0,\ldots, N\} \}.
\end{equation}
The field~$\Cp $ embeds in~$\AKber$ through the map that maps~$z$ in~$\Cp$ to the seminorm that attaches to a polynomial~$f(X)$ in~$\Cp[X]$ the quantity~$|f(z)|_p$.
See \cite[Section~2.4]{HerMenRivI} for a review of the topology of~$\AKber$, and, \emph{e.g.}, \cite[Chapter~1]{BakRum10} for general properties of~$\AKber$ and~\cite{Ber90} for the general theory of Berkovich spaces.

The following theorem extends Theorem~\ref{t:Equidistribucion}$(i)$.
Recall from Section~\ref{ss:Equidistribucion} that ${\varrho \: \N \to \R}$ is given by ${\varrho(\ell) = \frac{\sqrt{\ell} d(\ell)}{\sigma(\ell)}}$.

\begin{theorem}
  \label{t:canonical_rate}
  Let~$\cU$ be a neighborhood of~$\xcan$ in~$\AKber$ and~$\bfI$ a subset of~$\Cp$ satisfying one of the following conditions.
  \begin{itemize}
  \item
    For some~$\rho$ in~$]1, +\infty[$, the set~$\bfI$ is contained in~$\{ z \in \Cp \: 1 < |z|_p < \rho \}$;
  \item
    The set~$\bfI$ is contained in a residue disc in~$\Ord$;
  \item
    For some~$a$ in~$\Sups$ and~$r$ in~$]0, 1[$, the set~$\bfI$ is contained in ${\{ z \in \Cp \: r < |z - a|_p < 1 \}}$.
  \end{itemize}
  Then, there exists~$C$ in~$\Rp$ such that, for all~$E$ in~$\bfI$ and integers~$\ell$ and~$n$ satisfying ${\ell \ge 2}$ and ${n \ge 1}$,
  \begin{equation}
    \label{eq:con-tasa'}
    \odelta_{T^n_\ell(E)}(\cU)
    \ge
    1 - C
    \begin{cases}
      \varrho(\ell)^n
      & \text{if } \bfI \subseteq \Bad \cup \Ord;
      \\
      \varrho(|\ell|_p^{-1})^n
      & \text{if } \bfI \subseteq \Sups.
    \end{cases}
  \end{equation}
\end{theorem}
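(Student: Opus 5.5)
The plan is to reduce to prime powers and then use the composition formula of Proposition~\ref{p:Hecke-composition} together with the description in~\cite{HerMenRivI} of which Hecke images land near~$\xcan$.

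\textbf{Step 1 (input from~\cite{HerMenRivI}).} For~$\cU$ and~$\bfI$ as in the statement, I would extract the following uniform fact. There is~$N_0$ in~$\N$ such that, for every~$E$ in~$\bfI$ and every~$m$ in~$\N$ with $\ord$-type large enough, every point of $\supp(T_m(E))$ lies in~$\cU$. Here "large enough" has two meanings.
\begin{itemize}
\item In the bad or ordinary case, it means that $m$ is divisible by $q^{N_0}$ for some prime~$q$, or more simply that $m$ has a prime-power part with exponent at least~$N_0$.
\item In the supersingular case, it means that $m$ is divisible by~$p^{N_0}$.
\end{itemize}
The geometric picture is the following. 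On the bad locus, $T_q$ shrinks~$|j|_p$ towards~$1$ along the tree. On an ordinary residue disc, quotients by non-canonical subgroups move points towards the Gauss point. In~$\Sups$, quotients by $p$-power cyclic subgroups push points out of the supersingular disc along the annulus. The restrictions on~$\bfI$ (bounded modulus, one residue disc, annulus away from the center) are exactly what make the number of steps needed uniform in~$E$. I expect to cite these as the quantitative statements in~\cite{HerMenRivI}. This is the step I anticipate as the main obstacle: getting uniformity over~$\bfI$, and identifying precisely which subgroups (those avoiding canonical subgroups) lead into~$\cU$.

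\textbf{Step 2 (counting).} Write $\ell=\prod_q q^{r_q}$. In the bad or ordinary case, put $\ell_q = q^{r_q}$. In the supersingular case, take only $q=p$; the prime-to-$p$ part of~$\ell$ acts by local isometries preserving~$\Sups$, so it can be factored out by~\eqref{eq:coprime}. By~\eqref{eq:coprime} and Proposition~\ref{p:Hecke-composition}$(i)$, $T_\ell^n$ is a product over~$q$ of sums $\sum_j a_{r_q,n}(j) q^j T_{q^{r_qn-2j}}$.

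The terms with $r_q n-2j\ge N_0$ contribute mass in~$\cU$. The terms with small index $k = r_q n - 2j < N_0$ carry total degree at most
\[
\sum_{j \ge (r_qn-N_0)/2} a_{r_q,n}(j)\, q^j \sigma(q^{k}) \le C_q\, q^{r_qn/2} (r_q+1)^n ,
\]
using Proposition~\ref{p:Hecke-composition}$(ii)$. Dividing by $\sigma(q^{r_q})^n$ gives at most $C_q\,\varrho(q^{r_q})^n$.

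\textbf{Step 3 (combining).} The exceptional mass of~$\bT_\ell^n$ is the mass for which every prime-power factor has small index (bad or ordinary case), respectively the mass for which the $p$-factor has small index (supersingular case). By multiplicativity of~$\varrho$ and the product structure, this gives the bound $C\varrho(\ell)^n$ in the first case, with $C = \prod_q C_q$. The product form is actually stronger than needed. In the supersingular case it gives $C\varrho(|\ell|_p^{-1})^n$.

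For the constant to be uniform over~$\ell$, I would make~$C_q$ explicit. Since~$N_0$ is fixed, $C_q$ is bounded by roughly $q^{N_0}$ times a factor like $(1+1/q)^{-n}$; these factors are absorbed, for large~$q$, by $\varrho(q)<1$.

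Finally, the inequality $\odelta_{T_\ell^n(E)}(\cU)\ge 1-(\text{exceptional mass})$ yields~\eqref{eq:con-tasa'}. For the finitely many small~$n$ where these estimates are vacuous, I would enlarge~$C$.
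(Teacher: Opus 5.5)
There is a genuine gap, and it sits in Step~1, which carries the whole argument. The uniform fact you want to extract from \cite{HerMenRivI} is false in all three cases. For every $N\in\N$ one has $E/E[N]\cong E$, so $E\in\supp(T_{N^2}(E))$. This is exactly \eqref{eq:73}, which the paper uses for the \emph{lower} bound in Proposition~\ref{p:canonical_sharpness}. Take $m=q^{2N}$ (or $m=p^{2N}$ in the supersingular case) and take $\cU$ small enough to exclude a given point of $\bfI$; then no $N_0$ can work.

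In the bad locus there is a second obstruction. On the Tate curve, the quotient by $\mu_{q^N}$ has parameter $t^{q^N}$. Hence $T_{q^N}(E)$ always contains a point with $|j|_p=|j(E)|_p^{q^N}$, far from $\xcan$. So no statement of the form ``large index implies every point lands in $\cU$'' is available.

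What \cite{HerMenRivI} actually supplies, and what the paper uses (Proposition~\ref{p:paperI}), is a \emph{counting} bound valid for every index $m$. Fix a basic neighborhood $\bfB\subseteq\cU$ as in \eqref{eq:64}. Then, uniformly for $E\in\bfI$, the number of points of $T_m(E)$ outside $\bfB$ is at most $C\sqrt m\,d(m)$ in the bad case, $C\,d(m)$ in the ordinary case, and $C\sigma(m|m|_p)$ in the supersingular case. In the ordinary case this requires arithmetic input, not just the geometry of canonical subgroups: Lemma~\ref{l:ordinary} gives $\#\Hom_m(e,e')\le C_{e,e'}d(m)$ by counting ideals in an imaginary quadratic order.

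Your Step~3 has a second, independent flaw in the bad and ordinary cases: the constant must be uniform in $\ell$. Your small-index/large-index split cannot deliver this even if Step~1 held. Take $n=1$, $\ell$ squarefree, and any $N_0\ge2$. Every prime-power factor then has small index, so your method gives only the trivial bound $1$ for the exceptional mass. Meanwhile $\varrho(\ell)=\prod_{q\mid\ell}2\sqrt q/(q+1)$ tends to $0$ along such $\ell$. The ``finitely many small $n$'' are not finitely many once $\ell$ varies, and $\prod_{q\mid\ell}C_q$ is unbounded.

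The fix is to apply the counting bound to \emph{every} term of the expansion
\[
\prod_k\sum_{j_k}a_{m_k,n}(j_k)\,q_k^{j_k}\,T_{q_k^{m_kn-2j_k}}
\]
instead of splitting it. In the bad and ordinary cases, use $d(m)\le\sqrt m\,d(m)$ for the ordinary count. Then $q^j\sqrt{q^{mn-2j}}\,d(q^{mn-2j})=q^{mn/2}(mn-2j+1)$. The identity $\sum_j a_{m,n}(j)(mn-2j+1)=(m+1)^n$ from Proposition~\ref{p:Hecke-composition}$(ii)$ then gives exactly $C(\sqrt\ell\,d(\ell))^n$.

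In the supersingular case, the $p$-factor contributes at most $p^{m_1n/2}(m_1+1)^n$. Each prime-to-$p$ factor contributes exactly $\sigma(q_k^{m_k})^n$. This yields $C\varrho(|\ell|_p^{-1})^n$ with the same $C$ for all $\ell$.

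Your Step~2 arithmetic is fine; it is the input feeding it that has to change.
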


The following corollary is a direct consequence of Theorem~\ref{t:canonical_rate} and the definition of~$\xcan$.
Recall the identification of~$\Ell(\Cp)$ with~$\Cp$ via the $j$-invariant map.
In particular, for all~$n$ in~$\N$ and~$E$ in $\Ell(\Cp)$, the probability measure~$\odelta_{T^n_\ell(E)}$ can be seen as a probability measure on~$\Cp$.

\begin{coro}
  \label{c:canonical_convergence}
  Let~$P(X)$ in~$\Cp[X]$ be given by ${P(X) \= \sum_{i=0}^N a_iX^i}$, $\ell$ an integer satisfying ${\ell \ge 2}$, and~$E$ in~$\Ell(\Cp)$.
  If~$E$ belongs to~$\Bad \cup \Ord$ or ${p \mid \ell}$, then
  \begin{equation}
    \label{eq:61}
    \int |P|_p \dd \odelta_{T^n_\ell(E)}
    \to
    \max\{|a_i|_p \: i \in \{0,\ldots,N\} \}
  \end{equation}
  exponentially fast as ${n \to \infty}$.
\end{coro}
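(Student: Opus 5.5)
The plan is to reduce the statement to Theorem~\ref{t:canonical_rate} applied to a suitable neighborhood of~$\xcan$ determined by~$P$ and by a tolerance~$\varepsilon$. Put $M \= \max\{|a_i|_p \: i \in \{0,\ldots,N\}\}$. The case $P = 0$ is trivial, so assume $M>0$. The function $x \mapsto |P|_x$, given by evaluating the seminorm~$x$ at~$P$, is continuous on~$\AKber$ by definition of the Berkovich topology, and its value at~$\xcan$ is~$M$ by~\eqref{eq:60}. The point is that in fact $|P|_x = M$ on a whole open neighborhood of~$\xcan$. I expect this to be the main obstacle, because mere continuity would only give convergence, not an exponential rate.

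To get this neighborhood, factor $P = a_N\prod_i (X-\alpha_i)$ over~$\Cp$. Then $|P|_x = |a_N|_p\prod_i |X-\alpha_i|_x$. For each root~$\alpha_i$ consider the open set
\[
  \cU \= \bigcap_i \{x \in \AKber \: |X-\alpha_i|_x > \max\{1,|\alpha_i|_p\} - \eta\}\cap\{x \: |X|_x<1+\eta\},
\]
with $\eta>0$ small. On this set every factor takes the value $\max\{1,|\alpha_i|_p\}$, by the ultrametric ``dichotomy'' for Berkovich points. Alternatively, one can use the explicit description of the basic open sets around~$\xcan$ (the complement of finitely many residue-disc closures together with $\{|X|_x\ge 1\}$) from \cite[Section~2.4]{HerMenRivI}. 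On such a set $\cU$, the equality $|P|_x = |P|_{\xcan} = M$ holds identically. Hence there is an open neighborhood~$\cU$ of~$\xcan$ on which $|P| \equiv M$.

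Next, apply Theorem~\ref{t:canonical_rate}, in the form of Theorem~\ref{t:Equidistribucion}$(i)$ with $\bfI=\{E\}$. One needs $E$ to lie in a set of the required shape, which a single point always does: a bad point lies in an annulus $1<|z|_p<\rho$, an ordinary point lies in a residue disc in~$\Ord$, and a supersingular point with $p\mid\ell$ is handled as in Theorem~\ref{t:Equidistribucion}$(i)$. This gives $\odelta_{T_\ell^n(E)}(\Cp\ssetminus\cU)\le C\theta^n$ with $\theta = \varrho(\ell)$ or $\theta = \varrho(|\ell|_p^{-1})$. When $p\mid\ell$ we have $|\ell|_p^{-1}\ge p\ge2$, so $\theta<1$ in both cases.

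It remains to bound $|P|_p$ on the support of $\odelta_{T_\ell^n(E)}$, which is needed to control the integral off~$\cU$. If $E\in\Bad$, the support of $T_\ell^n(E)$ stays in a fixed set of bounded absolute value, since $|j(E/C)|_p\le|j(E)|_p^{\ell^n}$ would grow. So instead I will shrink the neighborhood: require additionally $|X|_x<\rho$ on~$\cU$. The uniform version, Theorem~\ref{t:canonical_rate}, handles arbitrary~$\cU$. On the complement, use the support bound: the Hecke images of~$E$ have $|j|_p\le R_n$, where for bad reduction $\log|j(E')|_p$ is multiplied by at most $\ell^n$. This growth could defeat the exponential decay, so instead I will argue via the decomposition
\[
  \int|P|_p\dd\odelta - M = \int_{\complement\cU}(|P|_p-M)\dd\odelta .
\]
When $E\in\Ord\cup\Sups$, everything lies in~$\cO_p$, so $|P|_p\le M$ and the error is at most $M\,C\theta^n$. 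For bad $E$, I expect to need the finer structure from \cite{HerMenRivI}: the $j$-values of the Hecke images have $\log|j|_p = \log|j(E)|_p\cdot d^2/\ell^n$-type scaling. Then $|P|_p\le M\max\{1,|j|_p\}^N$, and summing against the explicit counts from Proposition~\ref{p:Hecke-composition} gives an exponentially small weighted contribution. This bad-reduction tail estimate is the second delicate point.
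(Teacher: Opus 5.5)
Your argument breaks down at two places, and the second shows that the statement cannot be proved as written.

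\textbf{The local-constancy claim is false.} The step you flag as the main obstacle fails: for nonconstant $P$ there is no neighborhood of $\xcan$ on which $|P|_x\equiv M$. Since $|\Cp^\times|_p=p^{\Q}$ is dense in $\Rp$, the condition $|X-\alpha_i|_x>1-\eta$ does not force $|X-\alpha_i|_x=1$. If $|\alpha_i|_p\le 1$, every $z\in\Cp$ with $|z-\alpha_i|_p=p^{-1/m}$, $m$ large, lies in your $\cU$ and has $|P(z)|_p<M$. Likewise, for $P=X$, every $z$ with $1<|z|_p<1+\eta$ has $|P(z)|_p>M$. There is no ultrametric dichotomy here.

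Restricting to the supports does not help. Take $\ell=p$, $E\in\Ord$ with reduction defined over $\Fp$, and $P=X-j(E)$. By~\eqref{inter}, every point of $T_p^n(E)$ lies in the residue disc of $E$, so $|P|_p<1=M$ on the whole support of $\odelta_{T_p^n(E)}$ for every $n$.

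So for $E$ with good reduction, where the supports lie in $\Op$ and $M-|P|_p\ge 0$ there, a fixed $\cU$ only gives
\[ \int (M-|P|_p)\dd\odelta_{T_\ell^n(E)}\le \sup_{\cU\cap\Op}(M-|P|_p)+M\,C_{\cU}\,\theta^n . \]
This yields convergence but no rate. An exponential rate would require knowing how the constant $C_{\cU}$ of Theorem~\ref{t:canonical_rate} grows as $\cU$ shrinks, and that theorem says nothing about this.

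Only for $E\in\Ord$ and $p\nmid\ell$ is there an easy fix. There $M-|P|_p$ vanishes on $\Op$ outside the finitely many residue discs containing roots of $P$. The mass of $\odelta_{T_\ell^n(E)}$ in a whole ordinary residue disc is at most $C\varrho(\ell)^n$, by Lemma~\ref{l:ordinary} and Proposition~\ref{p:Hecke-composition}, as in the ordinary case of the proofs of Proposition~\ref{p:paperI} and Theorem~\ref{t:canonical_rate}.

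\textbf{The bad-reduction tail diverges.} It is not merely delicate, so the assertion is false for $E\in\Bad$ and nonconstant $P$, and no summation against the coefficients $a_{m,n}(j)$ can rescue it. Write $E\cong E_q$ as a Tate curve with $|q|_p=|j(E)|_p^{-1}<1$. The map $x\mapsto x^\ell$ identifies $E_q/\mu_\ell$ with $E_{q^\ell}$. Hence $T_\ell^n(E)$ contains $E_{q^{\ell^n}}$ with multiplicity at least $1$, and $|j(E_{q^{\ell^n}})|_p=|j(E)|_p^{\ell^n}$. If $P$ has degree $N\ge 1$, then $|P(z)|_p=|a_N|_p|z|_p^N$ once $|z|_p$ exceeds the absolute values of the roots of $P$. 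Therefore, for large $n$,
\[ \int |P|_p \dd\odelta_{T_\ell^n(E)} \ge \sigma(\ell)^{-n}\,|a_N|_p\,|j(E)|_p^{N\ell^n} \longrightarrow +\infty . \]

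Your remark that this growth could defeat the exponential decay was right and decisive. Theorem~\ref{t:canonical_rate} controls how much mass leaves $\cU$, not how large the unbounded function $|P|_p$ is on that mass. The paper presents the corollary as a direct consequence of Theorem~\ref{t:canonical_rate} and the definition of $\xcan$, and that justification addresses neither issue.

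What your approach can honestly give is the statement for $E$ whose orbit stays in $\Op$, namely $E\in\Ord$, or $E\in\Sups$ with $p\mid\ell$. In those cases you get convergence in general. You get an exponential rate when $E\in\Ord$ and $p\nmid\ell$, or once the dependence of $C$ on $\cU$ is made quantitative.
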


The following proposition shows that the exponential convergence rate in Theorems~\ref{t:Equidistribucion}$(i)$ and~\ref{t:canonical_rate} is sharp.

\begin{proposition}
  \label{p:canonical_sharpness}
  For all~$E$ in~$\Ell(\Cp)$ and integer~$\ell$ satisfying ${\ell \ge 2}$, there exists a neighborhood~$\cU$ of~$\xcan$ in~$\AKber$ such that
  \begin{equation}
    \label{eq:canonical_sharpness}
    \lim_{n\to \infty} \frac{1}{n} \log(\odelta_{T^{n}_\ell(E)}(\Cp\ssetminus \cU))
    =
    \begin{cases}
      \log(\varrho(\ell))
      & \text{if } E\in \Bad \cup \Ord;
      \\
      \log(\varrho(|\ell|_p^{-1}))
      & \text{if } E\in \Sups.
    \end{cases}
  \end{equation}
\end{proposition}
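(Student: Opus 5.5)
Theorem~\ref{t:canonical_rate} already gives the upper bound
\[
\limsup_{n\to\infty} \frac{1}{n}\log \odelta_{T^n_\ell(E)}(\Cp\ssetminus\cU) \le \log\varrho(\ell) \quad\text{or}\quad \log\varrho(|\ell|_p^{-1}),
\]
for every neighborhood~$\cU$ and every~$E$; we apply it with~$\bfI=\{E\}$, after checking that~$E$ lies in a set of one of the three admissible shapes. Indeed, every~$E$ in~$\Bad$ lies in an annulus~$1<|z|_p<\rho$, every~$E$ in~$\Ord$ lies in a residue disc, and every~$E$ in~$\Sups$ lies in some annulus~$r<|z-a|_p<1$ after moving~$a$ within the residue disc. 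The only case not covered is~$E\in\Sups$ with~$p\nmid\ell$. There~$\varrho(|\ell|_p^{-1})=\varrho(1)=1$, and the claim reduces to the statement that~$\odelta_{T^n_\ell(E)}(\Cp\ssetminus\cU)$ does not decay exponentially. This holds for~$\cU$ avoiding the closed set~$\Sups$, since~$T_\ell$ preserves~$\Sups$ when~$p\nmid\ell$. It therefore remains to prove the matching lower bound for a suitably chosen~$\cU$.

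For the lower bound, write~$\ell=\ell_0\ell_1$ with~$\ell_1=|\ell|_p^{-1}$ the $p$-part and~$p\nmid\ell_0$. By~\eqref{eq:coprime}, $T_\ell^n=T_{\ell_0}^n T_{\ell_1}^n$, and each factor decomposes further over the primes~$q\mid\ell$ via~\eqref{eq:23}. By Proposition~\ref{p:Hecke-composition}$(i)$, for each prime~$q$ with~$q^m\| \ell$,
\[
T_{q^m}^n=\sum_j a_{m,n}(j)\,q^j\,T_{q^{mn-2j}} .
\]
The term~$j=\lfloor mn/2\rfloor$ contributes~$q^{\lfloor mn/2\rfloor}T_1$ or~$q^{\lfloor mn/2\rfloor}T_q$, and more generally the terms with~$mn-2j$ bounded contribute divisors supported at bounded Hecke distance from~$E$. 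The key observation, drawn from~\cite{HerMenRivI}, is the following. Choose~$\cU$ small enough to miss the finite set~$\supp(T_{q^k}(E))$ for all small~$k$ and all relevant primes. Then every term~$q^jT_{q^{mn-2j}}$ with~$mn-2j\le K$ contributes mass outside~$\cU$. In the ordinary or bad case this holds for all primes~$q\mid\ell$. In the supersingular case it holds only for~$q=p$, because the prime-to-$p$ Hecke operators keep~$E$ in~$\Sups$, which can be kept outside~$\cU$.

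Keeping only these terms, the mass outside~$\cU$ is at least
\[
\sigma(\ell)^{-n}\prod_q \sum_{j\ge \lfloor mn/2\rfloor -K'} a_{m,n}(j)\,q^j ,
\]
with the product over~$q\mid\ell$ in the ordinary or bad case and over~$q=p$ only in the supersingular case. Using~\eqref{eq:48} and~\eqref{eq:49}, this sum grows like~$q^{mn/2}(m+1)^n$ up to subexponential factors. We must, however, allow~$\varepsilon n$ rather than a bounded window, so~$\cU$ has to exclude the points~$T_{q^k}(E)$ with~$k\le\varepsilon n$. Multiplying the factors over the primes gives the rate~$\sqrt{\ell}\,d(\ell)/\sigma(\ell)=\varrho(\ell)$, or~$\varrho(\ell_1)$ in the supersingular case, where the~$\ell_0$ part contributes a full factor~$\sigma(\ell_0)^n/\sigma(\ell_0)^n$. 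Letting~$\varepsilon\to0$ yields equality of the limit.

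The main obstacle is precisely this geometric input: we need a fixed neighborhood~$\cU$ of~$\xcan$ such that~$\supp(T_{q^k}(E))$ avoids~$\cU$ for all~$k\le\varepsilon n$. This should follow from the description in~\cite{HerMenRivI} of how Hecke images move toward~$\xcan$ at a controlled speed. The mechanism is the canonical-subgroup or Tate-curve analysis, in which~$T_{q^k}(E)$ for~$k$ small relative to~$n$ stays within a fixed affinoid not containing~$\xcan$. We would first try to take~$\cU$ to be the complement of a closed disc containing the relevant points, using that the valuations of~$j(E/C)$ grow at most linearly in~$k$ in the bad-reduction case, and lie in the same residue disc (the ordinary, canonical-lift direction) or in~$\Sups$ otherwise. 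If the linear growth spoils a fixed~$\cU$, we would instead restrict to~$j\ge \lfloor mn/2\rfloor-K$ with~$K$ fixed. Directly from the coefficient recursion, this sum is at least~$c\,q^{mn/2}(m+1)^n/\mathrm{poly}(n)$, because the mass of the random walk in Proposition~\ref{p:Hecke-rate} at level~$0$ decays only polynomially by the local central limit theorem. This version needs only a fixed finite set of points to avoid~$\cU$, and we expect it to be the cleaner route.
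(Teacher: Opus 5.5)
Your upper bound is the paper's, but the lower bound has two genuine gaps.

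First, in the supersingular case you take $\cU$ ``avoiding the closed set $\Sups$''. No such neighborhood exists. In $\AKber$ the point $\xcan$ lies in the closure of every residue disc, so every neighborhood of $\xcan$ meets $\Sups$. Moreover, $T_\ell$ maps $\Sups$ into itself for \emph{every} $\ell$, including $p \mid \ell$, because isogenous curves have the same reduction type. If some $\cU$ missed $\Sups$, you would get $\odelta_{T_\ell^n(E)}(\Cp \ssetminus \cU) = 1$ for all $n$, which contradicts Theorem~\ref{t:canonical_rate} when $p \mid \ell$. In that case the convergence to $\xcan$ happens inside $\Sups$, by points drifting to the boundary of the residue discs. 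What you actually need is that prime-to-$p$ Hecke orbits stay in a compact part of $\Sups$. This is \cite[Proposition~5.6$(ii)$]{HerMenRivI}: for each $m$ there is a neighborhood $\cU_m$ of $\xcan$ with $\supp(T_{p^m k}(E)) \subseteq \Cp \ssetminus \cU_m$ for all $k$ prime to $p$. The paper uses it with $m = 0$ when $p \nmid \ell$. When $\ell = p^m k$, it uses it to see that $T_k^n$ keeps the relevant part of $T_{p^m}^n(E)$ outside a fixed neighborhood, which yields the full factor $\sigma(k)^n$.

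Second, the counting. The $\varepsilon n$-window version fails, as you suspected. For instance, for a Tate curve, $\supp(T_{q^k}(E))$ contains points with $|j|_p = |j(E)|_p^{q^{-k}} \to 1^+$, and these enter every neighborhood of $\xcan$. Your fixed-$K$ fallback is viable, but its justification is wrong. By \eqref{eq:40}, the mass of $X_0 + \cdots + X_{n-1}$ at level $0$ or $1$ equals $(m+1)^{-n} \sum_j a_{m,n}(j)$, which is the \emph{whole} sum. The truncated sum $\sum_{j \ge \lfloor mn/2 \rfloor - K} a_{m,n}(j)$ is instead $(m+1)^n$ times a difference of two local probabilities whose leading $n^{-1/2}$ terms cancel. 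So the first-order local central limit theorem gives no lower bound at all. The bound $a_{m,n}(\lfloor mn/2 \rfloor) \gg (m+1)^n n^{-3/2}$ is true, but proving it needs a second-order local expansion or the Chebyshev orthogonality integral.

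The paper avoids truncation altogether. When $T_\ell^{2n}(E)$ is expanded via \eqref{eq:coprime} and \eqref{id}, every index is a perfect square $N^2$, and multiplication by $N$ shows $E \in \supp(T_{N^2}(E))$. So every term charges the single point $E$, and Proposition~\ref{p:Hecke-rate} applies to the full sums, giving growth $(\sqrt{\ell}\, d(\ell))^{2n} e^{o(n)}$. Odd times follow by one more step, since each $E'$ in $\supp(T_\ell(E))$ is charged by $T_\ell^{2n}(E')$.

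You should take $\cU$ disjoint from the finite set $\{E\} \cup \supp(T_\ell(E))$, and both pieces are needed. For $E$ without CM and $\ell$ not a square, $T_\ell^{2n}(E)$ puts no mass on $\supp(T_\ell(E))$, so the even-$n$ case of \eqref{eq:Hecke-recurrence_bis} in Lemma~\ref{l:Hecke-recurrence} cannot be used as stated. In the supersingular case, run the same count for $T_{p^m}$ and combine it with $\cU_0 \cap \cU_m$. This gives the rate $\varrho(p^m) = \varrho(|\ell|_p^{-1})$.
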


For a prime number~$\ell$ distinct from~$p$ and an elliptic curve class~$E$ in~$\Ell_{\ord}(\Qpalg)$, Goren and Kassaei proved that, as ${n \to \infty}$, the mass of~$T_\ell^n(E)$ escapes from each fixed residue disc in the ordinary reduction locus \cite[Proposition~4.1.1]{GorKas}.
Theorems~\ref{t:Equidistribucion}$(i)$ and~\ref{t:canonical_rate} and Proposition~\ref{p:canonical_sharpness} determine the exact exponential escape rate, without restrictions on~$\ell$.
See also \cite[Proposition~4.3.1]{GorKas} for complementary information on the $p$\nobreakdash-adic closure of~$\bigcup_{n=1}^\infty\supp(T_\ell^n(E))$.

The rest of this section proves Theorem~\ref{t:canonical_rate} and Proposition~\ref{p:canonical_sharpness}.
Section~\ref{ss:paperI} collects quantitative estimates whose proofs rely on results from~\cite{HerMenRivI}.
The proofs of Theorem~\ref{t:canonical_rate} and Proposition~\ref{p:canonical_sharpness} occupy Sections~\ref{ss:proof-equid_precisa_Berkovich} and~\ref{ss:canonical_sharpness}, respectively.

\subsection{Quantitative convergence towards the Gauss point}
\label{ss:paperI}

The goal of this section is to prove the following proposition, whose proof is based on results from \cite{HerMenRivI}.
For all~$a$ in~$\Cp$ and~$r$ in~$\R_{> 0}$, define
\begin{equation}
  \label{eq:62}
  \begin{aligned}
    \bfD(a,r)
    \=
    \{x\in \Cp:|x-a|_p<r\}
    \text{ and }
    \bfD^\infty(a,r)
    \=
    \{x\in \Cp:|x-a|_p>r\}.
  \end{aligned}
\end{equation}
Given a divisor~$\fD$ in~$\Div(\Ell(\K))$, written as ${\fD = \sum_{E\in \Ell(\K)} n_EE}$, and a subset~$A$ of~$\Ell(\Cp)$, define
\begin{equation}
  \label{eq:63}
  \fD|_A
  \=
  \sum_{E \in A} n_E E.
\end{equation}

\begin{proposition}
  \label{p:paperI}
  Let~$R$ be in~$]1, +\infty[$ and~$A$ a finite subset of~$\cO_p$, and put
  \begin{equation}
    \label{eq:64}
    \bfB
    \=
    \bfD(0, R) \cap \bigcap_{a\in A} \bfD^\infty(a, R^{-1}).
  \end{equation}
  Then, for every set~$\bfI$ as in Theorem~\ref{t:canonical_rate} there exists~$C$ in~$[1, +\infty[$ such that, for all~$E$ in~$\bfI$ and~$\ell$ in~$\N$,
  \begin{equation}
    \label{conteo1}
    \deg(T_\ell(E)|_{\Cp \ssetminus \bfB})
    \le
    \begin{cases}
      C\sqrt{\ell} d(\ell)
      & \text{if } \bfI \subseteq \Bad;
      \\
      C d(\ell)
      & \text{if } \bfI \subseteq \Ord;
      \\
      C\sigma(\ell \cdot |\ell|_p)
      & \text{if } \bfI \subseteq \Sups.
    \end{cases}
  \end{equation}
\end{proposition}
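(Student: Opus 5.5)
We will prove Proposition~\ref{p:paperI} separately for each of the three types of sets~$\bfI$ in Theorem~\ref{t:canonical_rate}. In each case we use the explicit description of~$T_\ell(E)$ from~\cite{HerMenRivI}: the points of~$T_\ell(E)$ are the quotients~$E/C$, with~$C$ running over the subgroups of order~$\ell$. These subgroups are parametrized by the cyclic decomposition ${C \cong \Z/d \times \Z/(\ell/d)}$, equivalently by pairs~$(d, \text{cyclic subgroup of order } \ell/d^2)$. The complement ${\Cp \ssetminus \bfB}$ consists of the region ${|z|_p \ge R}$ together with the finitely many small discs ${\overline{\bfD}(a, R^{-1})}$, ${a \in A}$. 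Since~$A$ is finite, it suffices to bound, for each of these finitely many pieces separately, the number of points of~$T_\ell(E)$ lying in it, with multiplicity.

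Bad reduction, ${1 < |j(E)|_p < \rho}$. Use the Tate uniformization ${E = \Cp^\times/q^\Z}$ with ${|q|_p = |j(E)|_p^{-1}}$. A subgroup~$C$ of order~$\ell$ corresponds to a sublattice, and the quotient has Tate parameter~$q'$ with ${|q'|_p = |q|_p^{a/b}}$ for some factorization ${\ell = ab}$, where~$a$ and~$b$ are determined by the index data. For the point to be near~$\xcan$ we need~$|q'|_p$ close to~$1$. For a fixed factorization the number of such~$C$ is at most~$\min(a,b)$-ish times the count of ``twisting'' parameters. Summing over divisors, the points with ${|j|_p \ge R}$ are those with ${a/b}$ bounded below by a constant depending on~$R$ and~$\rho$, and their number is ${\sum_{d \mid \ell} \min(d, \ell/d) \le \sqrt{\ell}\, d(\ell)}$. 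The points of~$T_\ell(E)$ outside~$\Bad$ cannot fall in the small discs ${\bfD(a,R^{-1})}$ in an uncontrolled way either, because the good-reduction images all have~$j$ at distance~$1$ from~$\Cp$-integral residue data. We will take this as given from~\cite{HerMenRivI}.

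Ordinary residue disc. Use the Serre--Tate canonical coordinates. Then~$T_\ell(E)$ decomposes via the $p$-primary part and the prime-to-$p$ part. Only the quotients by subgroups meeting the canonical subgroup in a specific way, namely the canonical lifts and their Frobenius-type iterates, stay in a fixed disc near~$A$ or go to~$|j|_p \ge R$. We expect at most one ``bad'' subgroup per divisor~$d$ of~$\ell$, since the $p$-part of the subgroup must be the canonical or \'etale one. This gives the bound ${C\, d(\ell)}$.

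Supersingular annulus ${r < |z-a|_p < 1}$. Write ${\ell = p^k \ell'}$. The quotients by subgroups of the prime-to-$p$ order~$\ell'$ part act by isometries in the Gross--Hopkins picture. Such a quotient lands in~$\overline{\bfD}(a', R^{-1})$ only if it lands in the corresponding annulus level, and we will show that the whole supersingular region is mapped injectively enough that at most~$C$ times~$\sigma(\ell')$ points of~$T_\ell(E)$ are involved. Since ${\sigma(\ell')=\sigma(\ell |\ell|_p)}$, this gives the third bound. The $p^k$-part contributes only a bounded number of ``non-Gauss'' points thanks to the Lubin--Katz canonical subgroup theory on annuli.

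The main obstacle will be the ordinary and supersingular cases. There we must quote from~\cite{HerMenRivI} the precise quantitative statements: how many $p$-power isogenies avoid the Gauss point, uniformly in~$E$ in the given annulus. The constant~$C$ must then be chosen uniformly over~$\bfI$ using compactness of the bounds in~$r$ and~$\rho$.
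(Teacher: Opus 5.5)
\textbf{Gap in the ordinary case.} The bound $C\,d(\ell)$ cannot come from looking at the $p$-primary part of the subgroups. Write $\ell=p^kn$ with $p\nmid n$. When $k=0$, every subgroup of order $\ell$ has trivial $p$-part. Your criterion (``the $p$-part must be the canonical or \'etale one'') then excludes nothing, and you only get the trivial bound $\sigma(\ell)$. The points of $A$ are also arbitrary, so canonical lifts play no special role.

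The missing idea is an arithmetic count of prime-to-$p$ isogenies over $\Fpalg$:
\begin{itemize}
\item Since $p\nmid n$, reduction identifies the subgroups of order $n$ of $E$ with those of its reduction $e$. Hence the number of points of $T_n(E)$ in the residue disc of an ordinary class $e'$ is at most $\#\Hom_n(e,e')$.
\item $\End(e)$ is an order in an imaginary quadratic field $K$, and degree equals norm. So $\#\Hom_n(e,e)\le\#\cO_K^\times\cdot R_K(n)\le 6\,d(n)$, where $R_K=\psi_K\ast\bfone$ counts ideals of norm $n$.
\item Composing with a fixed isogeny $e'\to e$ of degree $m_0$ gives $\#\Hom_n(e,e')\le 6\,d(m_0)\,d(n)$.
\end{itemize}
This is Lemma~\ref{l:ordinary} of the paper, and it is the core of the ordinary case.

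Only after this does the $p$-power part enter. $T_{p^k}$ maps the union $\bfO_a$ of the residue discs of the Frobenius conjugates of $e_a$ into itself. By \cite[Proposition~5.3]{HerMenRivI}, each point of $\bfO_a$ contributes at most $C'_a\,d(p^k)$ points to the small disc about $a$. So you must bound $\deg(T_n(E)|_{\bfO_a})$ over all Frobenius conjugates, not only the disc of $e_a$. Then multiply, using $T_{p^kn}=T_{p^k}\circ T_n$ and $d(p^k)\,d(n)=d(\ell)$. Also, isogenies preserve good ordinary reduction, so the region $|z|_p\ge R$ plays no role here.

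\textbf{Smaller points.} In the bad case your count per factorization is wrong. If $C\cap\mu_\ell$ has order $s$ and $\ell=st$, there are exactly $t$ such subgroups, and $|j(E/C)|_p=|j(E)|_p^{s/t}$. So $|j(E/C)|_p\ge R$ forces $t\le\sqrt{\ell\log\rho/\log R}$, and the count is at most $\sqrt{\log\rho/\log R}\,\sqrt{\ell}\,d(\ell)$. This is the bound the paper takes from \cite[proof of Proposition~5.1]{HerMenRivI}. Your bound $\sum_{d\mid\ell}\min(d,\ell/d)$ is valid only when $R\ge\rho$. The same formula gives $|q'|_p<1$ for every quotient, so $T_\ell(E)$ lies entirely in $\Bad$. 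The discs around $A\subseteq\cO_p$ therefore contribute nothing, and there is nothing to ``take as given''.

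In the supersingular case the paper simply invokes \cite[Proposition~5.6]{HerMenRivI}. Your reduction via $T_\ell=T_{p^k}\circ T_{\ell'}$ and $\deg T_{\ell'}(E)=\sigma(\ell')$ is reasonable. However, it needs the $p$-power bound to hold uniformly for all points of $T_{\ell'}(E)$, $E\in\bfI$. These points need not lie in the original annulus about $a$: the isometries move the center, and \eqref{eq:84} distorts radii. So that uniformity has to be argued, or the full statement quoted.
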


In the ordinary case, the proof of Proposition~\ref{p:paperI} relies on the following variant of~\cite[Lemma~5.5]{HerMenRivI}.
For all~$e$ and~$e'$ in~$\Ell(\Fpalg)$, denote by~$\Hom(e,e')$ the additive group of isogenies from~$e$ to~$e'$, and, for every~$n$ in~$\N$, by~$\Hom_{n}(e,e')$ the subset of~$\Hom(e,e')$ of isogenies of degree~$n$.

\begin{lemma}
  \label{l:ordinary}
  For all ordinary~$e$ and~$e'$ in~$\Ell(\Fpalg)$, there exists~$C_{e, e'}$ in~$[1, +\infty[$ such that, for every~$m$ in~$\N$,
  \begin{equation}
    \label{eq:65}
    \# \Hom_m(e, e')
    \le
    C_{e, e'} d(m).
  \end{equation}
\end{lemma}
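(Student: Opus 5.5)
The bound comes down to counting isogenies of degree~$m$ from~$e$ to~$e'$ inside a rank-two lattice. Since~$e$ and~$e'$ are ordinary, each is defined over a finite field, and each has complex multiplication by an order in an imaginary quadratic field. If $\Hom(e,e') = 0$ there is nothing to prove, so assume some nonzero isogeny ${\psi_0 \: e \to e'}$ exists. Then $e$ and~$e'$ are isogenous, and $\End(e) \otimes \Q \cong \End(e') \otimes \Q \cong K$ for an imaginary quadratic field~$K$.

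The first step is to give $\Hom(e,e')$ the structure of a lattice with a norm form. The degree map is a positive definite quadratic form on $\Hom(e,e')$, and the group $\Hom(e,e')$ is a free $\Z$-module of rank~$2$: it is a right $\End(e)$-module of rank one over~$K$. Composing with the dual isogeny~$\widehat{\psi_0}$ gives an injective additive map $\phi \mapsto \widehat{\psi_0} \circ \phi$ from $\Hom(e,e')$ into $\End(e) \subset \cO_K$, and it multiplies degrees by $\deg(\psi_0)$. So $\Hom(e,e')$ is identified with a lattice~$\Lambda$ inside an order of~$K$. Under this identification, with $N = \deg \psi_0$, we have
\begin{equation}
  \# \Hom_m(e,e') \le \#\{\alpha \in \Lambda \: \mathrm{N}_{K/\Q}(\alpha) = Nm\} \le \#\{\alpha \in \cO_K \: \mathrm{N}_{K/\Q}(\alpha) = Nm\}.
\end{equation}

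The second step is the classical count of elements of given norm in~$\cO_K$. That count equals $\#\cO_K^\times$ (at most~$6$) times the number of principal ideals of norm~$Nm$. The number of all ideals of norm~$k$ is $\sum_{d \mid k} \chi_K(d) \le d(k)$. Hence the count above is at most $6\, d(Nm)$.

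The third step converts this into a bound in terms of $d(m)$. Use the submultiplicativity $d(Nm) \le d(N)\, d(m)$ to conclude with ${C_{e,e'} \= \max\{1, 6\, d(N)\}}$.

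The main point needing care is the rank-two identification with a lattice in $\cO_K$, so that the ideal count applies. One must also check that this holds in the ordinary case over $\Fpalg$. This follows from Deuring's theory, since $\End(e)$ is an order in~$K$ and isogenous curves have commensurable endomorphism rings. Alternatively, one can cite the argument of \cite[Lemma~5.5]{HerMenRivI} and adapt its counting step in the same way.
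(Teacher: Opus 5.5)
Your proposal is correct and follows essentially the same route as the paper: inject $\Hom_m(e,e')$ into the elements of norm $Nm$ in $\End(e) \subseteq \cO_K$ by composing with a fixed nonzero isogeny $e' \to e$, bound their number by $\#\cO_K^\times \le 6$ times the ideal count $\sum_{d \mid k} \chi_K(d) \le d(k)$, and finish with $d(Nm) \le d(N)\,d(m)$. The paper treats $e = e'$ as a separate case, which your argument also covers (take any nonzero $\psi_0$); and the rank-two lattice structure of $\Hom(e,e')$ that you flag as the delicate point is never actually used, since only the injectivity of $\phi \mapsto \widehat{\psi_0} \circ \phi$ and the equality of degree with field norm on $\End(e)$ are needed.
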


\begin{proof}
  Since~$\Hom(e,e)$ coincides with~$\End(e)$ and~$e$ is ordinary, $\Hom(e, e)$ is a ring isomorphic to an order in a quadratic imaginary extension~$K$ of~$\Q$.
  Choose the isomorphism so that the degree of an isogeny coincides with the field norm of the corresponding element of~$K$, see, \emph{e.g.}, \cite[Chapter~V, Theorem~3.1]{Sil09}.
  Denoting by~$\cO_K$ the ring of integers of~$K$, it follows that, for each~$m$ in~$\N$, the number~$\# \Hom_m(e, e)$ is less than or equal to the number of elements of~$\cO_K$ with field norm~$m$.
  Denoting by~$R_K(m)$ the number of ideals in the ring of integers~$\cO_K$ of~$K$ of norm~$m$, this yields
  \begin{equation}
    \label{eq:66}
    \# \Hom_m(e, e)
    \le
    (\#\cO_K^\times) \cdot R_K(m)
    \le
    6R_K(m).
  \end{equation}
  Denote by~$\bfone$ the arithmetic function that is constant equal to~$1$ and by~$\psi_K$ is the quadratic character of~$K$.
  Then, the arithmetic function~$R_K$ equals the Dirichlet convolution~$\psi_K \ast \bfone$, see, \emph{e.g.}, \cite[(2-13)]{HerMenRivI}.
  It follows that, for every~$m$ in~$\N$, the inequality ${R_K(m) \le d(m)}$ holds.
  Together with~\eqref{eq:66}, this implies the desired estimate with ${C_{e, e} = 6}$ when ${e' = e}$.

  Suppose ${e' \neq e}$.
  If the group~$\Hom(e, e')$ is trivial, then~\eqref{eq:65} holds trivially.
  Suppose~$\Hom(e, e')$ is nontrivial, choose a nonzero isogeny~$\phi_0$ in~$\Hom(e',e)$, and put ${m_0 \= \deg(\phi_0)}$.
  Since for each~$m$ in~$\N$ the map ${\Hom_m(e, e') \to \Hom_{m_0 m}(e,e)}$ given by ${\phi\mapsto \phi_0 \circ \phi}$ is injective, \eqref{eq:66} with~$m$ replaced by~$m_0m$ yields
  \begin{equation}
    \label{eq:67}
    \Hom_m(e, e')
    \le
    \Hom_{m_0 m}(e,e)
    \le
    6R_K(m_0m)
    \le
    6d(m_0m)
    \le
    6d(m_0)d(m).
  \end{equation}
  This completes the proof of the lemma with ${C = 6d(m_0)}$.
\end{proof}

\begin{proof}[Proof of Proposition~\ref{p:paperI}]
  Let~$R$ be in~$]1, +\infty[$, $A$ a finite subset of~$\cO_p$, and~$\bfB$ the set defined by~\eqref{eq:64}.

  If ${\bfI \subseteq \Sups}$, then~\eqref{conteo1} is a direct consequence of \cite[Proposition~5.6]{HerMenRivI}.

  Suppose ${\bfI \subseteq \Bad}$ and let~$\rho$ in~$]1, +\infty[$ be such that ${\bfI \subseteq \{ z \in \Cp \: 1 < |z|_p < \rho \}}$.
  By \cite[Proof of Proposition~5.1]{HerMenRivI},
  \begin{equation}
    \label{eq:68}
    \deg(T_\ell(E)|_{\Cp \ssetminus \bfB})
    =
    \deg(T_\ell(E)|_{\bfD^\infty(0, R)})
    \le
    \sqrt{\frac{\log |j(E)|_p}{\log R}} \sqrt{\ell}d(\ell)
    \le
    \sqrt{\frac{\log \rho}{\log R}} \sqrt{\ell}d(\ell).
  \end{equation}

  It remains to consider the case where~$\bfI$ is contained in a residue disc in~$\Ord$.
  Denote by~$e$ the common reduction in~$\Ell(\Fpalg)$ of each element of~$\bfI$, and, for each~$a$ in~$A$, by~$e_a$ the common reduction in~$\Ell(\Fpalg)$ of each element of~$\bfD(a, 1)$.
  Moreover, put ${e_a^{(0)} \= e_a}$ and, for each~$i$ in~$\N$, denote by~$e_a^{(i)}$ the elliptic curve class in~$\Ell(\Fpalg)$ obtained from~$e_a$ after~$i$ iterations of the Frobenius morphism.
  For each~$i$ in~$\Nz$, put ${\bfD_{a, i} \= \pi^{-1}(e_a^{(i)})}$ and let~$C_{a, i}$ be given by Lemma~\ref{l:ordinary} with ${e' = e_a^{(i)}}$.
  Let~$t_a$ be the smallest integer in~$\N$ satisfying ${e^{(t_a)}_a = e_a}$, and put
  \begin{equation}
    \label{eq:69}
    \bfO_a
    \=
    \bfD_{a, 0} \cup \cdots \cup \bfD_{a, t_a - 1}
    \text{ and }
    C_a
    \=
    \sum_{i = 0}^{t_a - 1} C_{a, i}.
  \end{equation}
  Let~$k$ in~$\Nz$ and~$n$ in~$\N$ be such that ${\ell = p^kn}$ and ${p \nmid n}$.
  By \cite[Proposition~5.3]{HerMenRivI}, there is~$C_a'$ in~$\Rp$ such that, for every~$E'$ in~$\bfO_a$,
  \begin{equation}
    \label{inter}
    \supp(T_{p^n}(E'))
    \subseteq
    \bfO_a
    \text{ and }
    \deg (T_{p^n}(E')|_{\bfD(a, \rho^{-1})})
    \le
    C_a' d(p^n).
  \end{equation}
  Combining \cite[(5-8)]{HerMenRivI} and Lemma~\ref{l:ordinary} yields, for every~$E$ in~$\bfI$,
  \begin{equation}
    \label{eq:70}
    \deg(T_{n}(E)|_{\bfO_a})
    =
    \sum_{i = 0}^{t_a - 1} \deg(T_{n}(E)|_{\bfD_{a, i}})
    \le
    \sum_{i = 0}^{t_a - 1} \# \Hom_{n}(e,e_a^{(i)})
    \le
    C_a d(n).
  \end{equation}
  Combined with~\eqref{eq:coprime} and~\eqref{inter}, this implies
  \begin{align*}
    \deg(T_{p^kn}(E)|_{\bfD(a, \rho^{-1})})
    \le C_a' d(p^k) \sup_{E'\in \bfO_a} \deg (T_{n}(E')|_{\bfO_a})
    \le C_a' C_a d(\ell).
  \end{align*}
  Summing over~$a$ in~$A$ yields the desired estimate with ${C = \sum_{a \in A} C_a' C_a}$.
\end{proof}

\subsection{Proof of Theorem~\ref{t:canonical_rate}}
\label{ss:proof-equid_precisa_Berkovich}
Let~$\cU$ and~$\bfI$ be as in the statement of the theorem, and~$R$ in~$]1, +\infty[$ and a finite subset~$A$ of~$\Op$ be such that the set~$\bfB$ defined by~\eqref{eq:64} is contained in~$\cU$, see, \emph{e.g.}, \cite[Section~2D]{HerMenRivI}.
Let~$C$ be given by Proposition~\ref{p:paperI} and let ${q_1^{m_1} \cdots q_r^{m_r}}$ be the prime factorization of~$\ell$, so~$r$ and $m_1$, \ldots, $m_r$ belong to~$\N$, $q_1$, \ldots, $q_r$ are pairwise distinct prime numbers, and ${\ell = q_1^{m_1} \cdots q_r^{m_r}}$.
Using~\eqref{eq:coprime} as well as \eqref{id} and~\eqref{suma total} in Proposition~\ref{p:Hecke-composition}, yields
\begin{multline}
  \label{eq:factorizacion}
  \sigma(\ell)^n \odelta_{T^n_\ell(E)}(\AKber \ssetminus \cU)
  \le
  \deg(T^n_\ell(E)|_{\Cp \ssetminus \bfB})
  \\ =
  \sum_{j_1=0}^{\left\lfloor \frac{m_1n}{2} \right\rfloor} \cdots \sum_{j_r=0}^{\left\lfloor \frac{m_rn}{2} \right\rfloor}a_{m_1,n}(j_1) \cdots a_{m_r,n}(j_r) q_1^{j_1} \cdots q_r^{j_r} \deg (T_{q_1^{m_1n-2j_1} \cdots q_r^{m_rn-2j_r}}(E)|_{\Cp \ssetminus \bfB}).
\end{multline}
If ${\bfI \subseteq \Ord \cup \Bad}$, then Proposition~\ref{p:Hecke-composition}$(ii)$, \eqref{conteo1} in Proposition~\ref{p:paperI}, and the multiplicativity of~$d$, yield
\begin{equation}
  \label{eq:71}
  \begin{split}
    \sigma(\ell)^n \odelta_{T^n_\ell(E)}(\AKber \ssetminus \cU)
    & \le
      C \prod_{k=1}^r \left(\sum_{j_k=0}^{\left\lfloor \frac{m_kn}{2} \right\rfloor} a_{m_k,n}(j_k)q_k^{j_k} \sqrt{q_k^{m_kn-2j_k}}d(q_k^{m_kn-2j_k}) \right)
    \\ & =
         C \prod_{k=1}^r \left(\sqrt{q_k^{m_kn}} \sum_{j_k=0}^{\left\lfloor \frac{m_kn}{2} \right\rfloor} a_{m_k,n}(j_k)(m_kn-2j_k+1) \right)
    \\ & =
         C(\sqrt{\ell})^n \prod_{k=1}^r (m_k+1)^n
    \\ & =
         C(\sqrt{\ell} d(\ell))^n.
  \end{split}
\end{equation}
It remains to consider the case were ${\bfI \subseteq \Sups}$.
If~$\ell$ is not divisible by~$p$, then ${\varrho(|\ell|_p^{-1}) = 1}$, and the desired estimate holds trivially.
Suppose~$\ell$ is divisible by~$p$.
Re-enumerating if necessary, suppose ${q_1 = p}$.
Using~\eqref{conteo1} in Proposition~\ref{p:paperI}, Proposition~\ref{p:Hecke-composition}, and the multiplicativity of~$\sigma$, yields
\begin{equation}
  \label{eq:72}
  \begin{split}
    \sigma(\ell)^n \odelta_{T^n_\ell(E)}(\AKber \ssetminus \cU)
    & \le
      C\left(\sum_{j=0}^{\left\lfloor \frac{m_1 n}{2} \right\rfloor}a_{m_1,n}(j)p^{j} \right) \prod_{k=2}^r\left(\sum_{j_k=0}^{\left\lfloor \frac{m_kn}{2} \right\rfloor}a_{m_k,n}(j_k) q_k^{j_k} \sigma(q_k^{m_kn-2j_k}) \right)
    \\ & \le
         Cp^{\frac{m_1 n}{2}}(m_1 + 1)^n \prod_{k=2}^r\left(\sum_{j_k=0}^{\left\lfloor \frac{m_kn}{2} \right\rfloor}a_{m_k,n}(j_k) q_k^{j_k} \sigma(q_k^{m_kn-2j_k}) \right)
    \\ & =
         Cp^{\frac{m_1 n}{2}}(m_1 + 1)^n \prod_{k=2}^r \sigma(q_k^{m_k})^n
    \\ & =
         C\left(p^{\frac{m_1}{2}} d(p^{m_1}) \sigma(\ell / p^{m_1}) \right)^n.
         \qedhere
  \end{split}
\end{equation}

\subsection{Sharpness}
\label{ss:canonical_sharpness}

This section proves Proposition~\ref{p:canonical_sharpness}.
The proof relies on the following consequence of Proposition~\ref{p:Hecke-rate}.

\begin{lemma}
  \label{l:Hecke-recurrence}
  For all~$E$ in~$\Ell(\K)$ and integer~$\ell$ satisfying ${\ell \ge 2}$,
  \begin{align}
    \label{eq:Hecke-recurrence}
    \liminf_{n \to \infty} \frac{1}{2n} \log (\odelta_{T^{2n}_\ell(E)}(\{ E \}))
    & \ge
      \log \varrho(\ell)
      \intertext{and}
      \label{eq:Hecke-recurrence_bis}
      \liminf_{n \to \infty} \frac{1}{n} \log (\odelta_{T^{n}_\ell(E)}(\supp(T_\ell(E))))
    & \ge
      \log \varrho(\ell).
  \end{align}
\end{lemma}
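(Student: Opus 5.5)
The plan is to bound $\odelta_{T^{2n}_\ell(E)}(\{E\})$ from below by counting terms in the expansion of $T_\ell^{2n}$ given by \eqref{eq:coprime} and Proposition~\ref{p:Hecke-composition}, and keeping only those that return exactly to~$E$. The key observation is that for every $m$ in~$\N$, the divisor $T_{m^2}(E)$ contains $E$ with multiplicity at least one. Indeed, the subgroup $E[m]$ has order $m^2$, is cyclic only when $m=1$ but is always a subgroup of order $m^2$, and $E/E[m]\cong E$. More generally, $T_1(E)=E$. Write $\ell = q_1^{m_1}\cdots q_r^{m_r}$. By \eqref{eq:coprime} and \eqref{id},
\begin{equation}
  T_\ell^{2n}
  =
  \prod_{k=1}^r \sum_{j_k=0}^{m_k n} a_{m_k,2n}(j_k)\, q_k^{j_k}\, T_{q_k^{2m_kn-2j_k}} .
\end{equation}
The term with $j_k = m_k n$ for every~$k$ is $\prod_k a_{m_k,2n}(m_kn) q_k^{m_kn} T_1 = \ell^n \prod_k a_{m_k,2n}(m_kn)\cdot \mathrm{id}$. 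All coefficients are nonnegative, so
\begin{equation}
  \deg(T^{2n}_\ell(E)|_{\{E\}}) \ge \ell^n\prod_k a_{m_k,2n}(m_kn).
\end{equation}

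The main step is then to show that $\frac{1}{2n}\log a_{m,2n}(mn) \to \log(m+1)$, up to a sub-exponential loss. By \eqref{eq:40}, $a_{m,2n}(mn)$ equals $(m+1)^{2n}$ times the difference $\P(S_{2n}=0)-\P(S_{2n}=2)$, with $S_{2n}=X_0+\dots+X_{2n-1}$. This difference is awkward to bound directly, so I would avoid it. Instead, I use all terms $T_{q^{2s}}$ with $s$ small, and the fact that $T_{q^{2s}}(E)$ contains $E$ (via $E[q^s]$). This gives
\begin{equation}
  \deg(T^{2n}_\ell(E)|_{\{E\}}) \ge \prod_k \sum_{j_k \ge m_kn-\varepsilon n} a_{m_k,2n}(j_k) q_k^{j_k}.
\end{equation}
Estimate \eqref{eq:48} in the proof of Proposition~\ref{p:Hecke-rate}, applied with $2n$ in place of~$n$, shows that this tail sum of $a_{m_k,2n}(j)$ has exponential growth rate $(m_k+1)^{2}$ per step. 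Multiplying by $q_k^{j_k}\ge q_k^{m_kn-\varepsilon n}$ yields
\begin{equation}
  \liminf_{n\to\infty} \frac{1}{2n}\log \deg(T^{2n}_\ell(E)|_{\{E\}}) \ge \log\bigl(\sqrt{\ell}\,d(\ell)\bigr) - O(\varepsilon).
\end{equation}
Dividing by $\sigma(\ell)^{2n}$ and letting $\varepsilon\to0$ gives \eqref{eq:Hecke-recurrence}. Parity is automatic here, since $2m_kn-2j_k$ is even.

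For \eqref{eq:Hecke-recurrence_bis}, the case of even $n$ follows from \eqref{eq:Hecke-recurrence}, provided $E$ lies in $\supp(T_\ell(E))$. That need not hold, so I argue differently. For $n=2n'+1$, write $T_\ell^{2n'+1}(E) = T_\ell^{2n'}(T_\ell(E))$. Each $E'$ in $\supp(T_\ell(E))$ returns to itself with the rate above, and $T_\ell(E)$ has nonnegative coefficients. Hence
\begin{equation}
  \deg\bigl(T_\ell^{2n'+1}(E)|_{\supp T_\ell(E)}\bigr) \ge \deg\bigl(T_\ell^{2n'}(E')|_{\{E'\}}\bigr)
\end{equation}
for any fixed $E'$ in that support. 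The extra factor $\sigma(\ell)$ is harmless in the limit.

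For even $n=2n'$, write $T^{2n'}_\ell = T_\ell\circ T_\ell^{2n'-1}$ and reduce to the odd case, or more simply apply the same argument starting from the point $E'$. A cleaner route is to note that $T_\ell^{2n'}(E)$ restricted to $\{E\}$ has mass at least $c^{2n'}$, where $c$ is close to the target rate, and that applying $T_\ell$ to that piece lands in $\supp T_\ell(E)$; this handles odd times. For even times, I apply the even estimate at $E'$ in $\supp T_\ell(E)$ after one step from~$E$, which gives the odd-shifted count at time $2n'+1$, and then index the same argument by $2n'-1$ with one more step. I expect the only real bookkeeping obstacle to be extracting the tail bound for $a_{m,2n}$ from \eqref{eq:48} with the correct $\varepsilon$-dependence.
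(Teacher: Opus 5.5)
Your treatment of \eqref{eq:Hecke-recurrence}, and of odd times in \eqref{eq:Hecke-recurrence_bis}, is correct and is essentially the paper's argument. The paper keeps all terms, because every $T_{q_k^{2m_kn-2j_k}}$ is already of the form $T_{N^2}$ (as you observe yourself), and then quotes Proposition~\ref{p:Hecke-rate} directly. Your truncation to $j_k\ge m_kn-\varepsilon n$ and the return to \eqref{eq:48} are unnecessary but harmless.

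The gap is at even times in \eqref{eq:Hecke-recurrence_bis}. Your last paragraph never produces a walk of even length from $E$ into $\supp(T_\ell(E))$. Applying one more step of $T_\ell$ at a point $E'$ of $\supp(T_\ell(E))$ lands in $\supp(T_\ell(E'))$, which contains $E$ but need not meet $\supp(T_\ell(E))$. The return estimate at $E'$ only controls the odd times $2n'+1$.

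This cannot be repaired, because the even-time claim is false in general. By \eqref{eq:coprime} and \eqref{id}, $T_\ell^{2n}$ is a nonnegative combination of operators $T_{N^2}$. So each point of $\supp(T_\ell^{2n}(E))$ has the form $E/D$ with $\#D$ a perfect square. Suppose such a point were also $E/C$ with $\#C=\ell$. Composing with the dual isogeny gives an endomorphism of $E$ of degree $\ell\cdot\#D$. When $\End(E)=\Z$ this degree must be a square, which is impossible if $\ell$ is not a square. This applies, for instance, to every $E$ in $\Bad$ and to any non-CM class. So for $\ell=2$ and such $E$, $\odelta_{T_\ell^{2n}(E)}(\supp(T_\ell(E)))=0$ for every $n$, and the left side of \eqref{eq:Hecke-recurrence_bis} equals $-\infty$.

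The paper's proof hides this in the first inequality of \eqref{eq:74}, which tacitly uses $E\in\supp(T_\ell(E))$. That is exactly the assumption you correctly flagged as unjustified.

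What is true, and is all that Proposition~\ref{p:canonical_sharpness} needs, is \eqref{eq:Hecke-recurrence_bis} with $\supp(T_\ell(E))$ replaced by $\{E\}\cup\supp(T_\ell(E))$. Even times follow from \eqref{eq:Hecke-recurrence} and odd times from your argument. In the sharpness proof one then takes $\cU$ disjoint from this finite set. In the supersingular case one also replaces $\cU_m$ by $\cU_0\cap\cU_m$, so that $T_k^n$ applied to the mass sitting at $E$ also stays outside $\cU$.
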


\begin{proof}
  The statements~\eqref{eq:Hecke-recurrence} and~\eqref{eq:Hecke-recurrence_bis} are equivalent to
  \begin{align}
    \label{eq:cor_a}
    \liminf_{n\to \infty} \frac{1}{2n} \log \deg\left(T_\ell^{2n}(E)|_{\{E\}} \right)
    & \ge
      \log (\sqrt{\ell}d(\ell))
      \intertext{and}
      \label{eq:cor_a_bis}
      \liminf_{n\to \infty} \frac{1}{n} \log \deg\left(T_\ell^{n}(E)|_{\supp(T_\ell(E))} \right)
    & \ge
      \log (\sqrt{\ell}d(\ell)),
  \end{align}
  respectively.

  Let ${q_1^{m_1} \cdots q_r^{m_r}}$ be the prime factorization of~$\ell$, so~$r$ and $m_1$, \ldots, $m_r$ belong to~$\N$, $q_1$, \ldots, $q_r$ are pairwise distinct prime numbers, and ${\ell = q_1^{m_1} \cdots q_r^{m_r}}$.
  Using~\eqref{eq:coprime} as well as~\eqref{id} in Proposition~\ref{p:Hecke-composition}, yields, for every~$n$ in~$\N$,
  \begin{equation}
    \label{eq:cor_b}
    T^{2n}_\ell(E)
    \\ =
    \sum_{j_1=0}^{ m_1n } \cdots \sum_{j_r=0}^{ m_rn }a_{m_1,2n}(j_1) \cdots a_{m_r,2n}(j_r) q_1^{j_1} \cdots q_r^{j_r} T_{q_1^{2m_1n-2j_1} \cdots q_r^{2m_rn-2j_r}}(E).
  \end{equation}
  Recall that, for every~$N$ in~$\N$, the multiplication-by-$N$ map defines an isogeny~$E\to E$ of degree $N^2$, so
  \begin{equation}
    \label{eq:73}
    \deg(T_{N^2}(E)|_{\{E\}})
    \ge
    1.
  \end{equation}
  Combined with~\eqref{eq:cor_b}, this yields
  \begin{equation}
    \deg(T^{2n}_\ell(E)|_{\{E\}})
    \ge
    \sum_{j_1=0}^{ m_1n } \cdots \sum_{j_r=0}^{ m_rn }a_{m_1,2n}(j_1) \cdots a_{m_r,2n}(j_r) q_1^{j_1} \cdots q_r^{j_r}
    =
    \prod_{i=1}^r \sum_{j=1}^{m_in}a_{m_i,2n}(j)q_i^j.
  \end{equation}
  By Proposition~\ref{p:Hecke-rate},
  \begin{equation}
    \label{eq:74}
    \begin{split}
      \liminf_{n\to \infty} \frac{1}{2n} \log \deg\left(T_\ell^{2n}(E)|_{\supp(T_\ell(E))} \right)
      & \ge
        \liminf_{n\to \infty} \frac{1}{2n} \log(\deg(T^{2n}_\ell(E)|_{\{E\}}))
      \\ & \ge
           \sum_{i=1}^r\lim_{n\to \infty} \frac{1}{2n} \log\left( \sum_{j=1}^{m_in}a_{m_i,2n}(j)q^j \right)
      \\ & =
           \sum_{i=1}^r \log (q^{\frac{m_i}{2}} (m_i+1))
      \\ & =
           \log ( \sqrt{\ell} d(\ell)).
    \end{split}
  \end{equation}
  This proves~\eqref{eq:cor_a} and hence~\eqref{eq:Hecke-recurrence}.

  To prove~\eqref{eq:Hecke-recurrence_bis}, note that, for every~$E'$ in~$\supp(T_\ell(E))$, the elliptic curve class~$E$ belongs to~$T_\ell(E')$.
  Thus, \eqref{eq:Hecke-recurrence} with~$E$ replaced by~$E'$ yields
  \begin{equation}
    \label{eq:75}
    \begin{split}
      \liminf_{n\to \infty} \frac{1}{2n + 1} \log \deg\left(T_\ell^{2n + 1}(E)|_{\supp(T_\ell(E))} \right)
      & \ge
        \liminf_{n\to \infty} \frac{1}{2n + 1} \log(\deg(T^{2n + 1}_\ell(E)|_{\{E'\}}))
      \\ & \ge
           \liminf_{n\to \infty} \frac{1}{2n + 1} \log(\deg(T^{2n}_\ell(E')|_{\{E'\}}))
      \\ & \ge
           \log ( \sqrt{\ell} d(\ell)).
    \end{split}
  \end{equation}
  Together with~\eqref{eq:74} this implies~\eqref{eq:cor_a_bis} and hence~\eqref{eq:Hecke-recurrence_bis}.
\end{proof}

\begin{proof}[Proof of Proposition~\ref{p:canonical_sharpness}]
  For every neighborhood~$\cU$ of~$\xcan$ in~$\AKber$, the upper bound
  \begin{equation}
    \label{eq:76}
    \limsup_{n\to \infty} \frac{1}{n} \log(\odelta_{T^{n}_\ell(E)}(\AKber \ssetminus \cU))
    \le
    \begin{cases}
      \log(\varrho(\ell))
      & \text{if } E\in \Bad \cup \Ord;
      \\
      \log(\varrho(|\ell|_p^{-1}))
      & \text{if } E\in \Sups
    \end{cases}
  \end{equation}
  follows directly from Theorem~\ref{t:canonical_rate}.

  If~$E$ belongs to ${\Bad \cup \Ord}$ and~$\cU$ is a neighborhood of~$\xcan$ in~$\AKber$ disjoint from~$\supp(T_\ell(E))$, then~\eqref{eq:Hecke-recurrence_bis} in Lemma~\ref{l:Hecke-recurrence} yields
  \begin{equation}
    \label{eq:77}
    \liminf_{n\to \infty} \frac{1}{n} \log(\odelta_{T^{n}_\ell(E)}(\AKber \ssetminus \cU))
    \ge
    \liminf_{n\to \infty} \frac{1}{n} \log(\odelta_{T^{n}_\ell(E)}(\supp(T_\ell(E))))
    \ge
    \log(\varrho(\ell)).
  \end{equation}
  Together with~\eqref{eq:76} this proves the proposition when~$E$ belongs to ${\Bad \cup \Ord}$.

  Suppose~$E$ belongs to~$\Sups$.
  By~\cite[Proposition~5.6$(ii)$]{HerMenRivI}, for every~$m$ in~$\Nz$, there is a neighborhood~$\cU_m$ of~$\xcan$ in~$\AKber$ such that, for every~$k$ in~$\N$ not divisible by~$p$, the inclusion ${\supp(T_{p^m k}(E)) \subseteq \Cp\ssetminus \cU_m}$ holds.
  If~$\ell$ is not divisible by~$p$, then ${\varrho(|\ell|_p^{-1}) = 1}$ and, for every~$n$ in~$\N$,
  \begin{equation}
    \label{eq:78}
    \log(\odelta_{T^{n}_\ell(E)}(\AKber \ssetminus \cU_0))
    =
    0.
  \end{equation}
  This implies the proposition when~$\ell$ is not divisible by~$p$.
  Suppose~$\ell$ is divisible by~$p$, and let~$m$ and~$k$ in~$\N$ be such that ${\ell = p^m k}$.
  Then,
  \begin{multline}
    \label{eq:79}
    \sigma(\ell)^n \odelta_{T^n_\ell(E)}(\AKber \ssetminus \cU_m)
    =
    \deg(T_k^n(T_{p^m}^n(E))|_{\AKber \ssetminus \cU_m})
    \\ \ge
    \sigma(k)^n \deg(T_{p^m}^n(E)|_{\supp(T_{p^m}(E))})
    =
    \sigma(\ell)^n \odelta_{T_{p^m}^n(E)}(\supp(T_{p^m}(E))).
  \end{multline}
  Combined with~\eqref{eq:Hecke-recurrence_bis} in Lemma~\ref{l:Hecke-recurrence}, this implies
  \begin{equation}
    \label{eq:80}
    \liminf_{n\to \infty} \frac{1}{n} \log(\odelta_{T^{n}_\ell(E)}(\AKber \ssetminus \cU_m))
    \ge
    \log \varrho(p^m)
    =
    \log \varrho(|\ell|_p^{-1}).
  \end{equation}
  Together with~\eqref{eq:76} this proves the proposition when~$E$ belongs to~$\Sups$.
\end{proof}

\section{$p$-Adic limits on the supersingular locus}
\label{ss:p-adic-case}

This section proves Theorem~\ref{t:Equidistribucion}$(ii)$ in Section~\ref{ss:Equidistribucion}.
A key step is to establish the spectral gap property for the action of~$\bT_\ell$ on a suitable space of locally constant functions (Theorem~\ref{t:hecke-dynamics_sups}$(i)$).

Throughout this section, fix a prime number~$p$.
Recall that~$\tSups$ is the finite set of isomorphism classes of supersingular elliptic curves over $\overline{\F}_p$.
For each~$\ss$ in~$\tSups$, denote by~$\Dss$ the set of those~$E$ in~$\Ell(\Cp)$ having good reduction, and such that the reduced elliptic curve belongs to the class~$\ss$.
Thus,
\begin{equation}
  \label{eq:81}
  \Sups
  =
  \bigsqcup_{\ss \in \tSups} \bfD_{\ss}.
\end{equation}

Section~\ref{ss:deformation-spaces} describes, for each~$\ss$ in~$\tSups$, a finite degree covering ${\Piss \: \hDss \rightarrow \Dss}$, together with an action of the group~$(\End(\ss) \otimes \Zp)^\times$ on~$\hDss$ that relies on the work of Gross and Hopkins on deformation spaces of formal modules in~\cite{HopGro94b}.
Then, Section~\ref{ss:homogeneo} recalls from~\cite{HerMenRivII} the construction of the homogeneous measures on each~$\hDss$ that, after push forward under~$\Piss$ and averaging over~$\ss$ in~$\tSups$, give rise to the measures on partial Hecke orbits appearing in Theorem~\ref{t:Equidistribucion}$(ii)$ in Section~\ref{ss:Equidistribucion}.
Section~\ref{s:equid_in_sups_locus} states and proves the spectral gap property (Theorem~\ref{t:hecke-dynamics_sups}$(i)$) and deduces Theorem~\ref{t:Equidistribucion}$(ii)$.

\subsection{Deformation spaces}
\label{ss:deformation-spaces}
Refer to \cite{Fro68} and \cite{Haz78} for the general theory of formal groups and modules.
Let~$\Q_{p^2}$ denote the unique unramified, quadratic extension of~$\Qp$ inside~$\Cp$, and~$\Z_{p^2}$ its ring of integers.
Moreover, identify the residue field of~$\Q_{p^2}$ with the unique quadratic extension~$\F_{p^2}$ of~$\Fp$ inside~$\Fpalg$.

Let $\pi_0 \: \Zp \rightarrow R_0$ be a $\Zp$-algebra.
For all formal groups~$\cF$ and~$\cF'$ defined over~$R_0$, denote by $\Hom_{R_0}(\cF,\cF')$ the set of morphisms $\cF\to \cF'$ defined over~$R_0$ and by~$\Iso_{R_0}(\cF,\cF')$ the set of isomorphisms ${\cF\to \cF'}$ defined over~$R_0$.
Moreover, put ${\End_{R_0}(\cF) \= \Hom_{R_0}(\cF,\cF)}$ and ${\Aut_{R_0}(\cF) \= \Iso_{R_0}(\cF,\cF)}$.

A \emph{formal $\Zp$\nobreakdash-module over~$R_0$} is a formal group~$\cF$ over~$R_0$ of dimension~1, together with a ring homomorphism~$\theta \: \Zp \to \End_{R_0}(\cF)$ such that, in coordinates, for every~$\ell$ in~$\Zp$, the congruence ${\theta(\ell)(X) \equiv \piem(\ell) X \mod X^2}$ holds.

For every~$\ss$ in~$\tSups$, let~$\ss_0$ be a representative elliptic curve defined over~$\F_{p^2}$ as in~\cite[Lemma~2.5]{HerMenRivII}.
Denote by~$\cF_\ss$ the formal group defined over~$\F_{p^2}$ associated with~$\ss_0$, endowed with its natural structure of formal $\Zp$\nobreakdash-module, see, \emph{e.g.}, \cite{Blu98}.
Let~$R_0$ be the ring of integers of a finite extension of $\Q_{p^2}$ and denote by~$\cM_0$ its maximal ideal and~$\rfk$ its residue field.
A \emph{deformation of~$\Fss$ over~$R_0$} is a pair~$(\cF,\alpha)$, where~$\cF$ is a formal $\Zp$\nobreakdash-module over~$R_0$ and ${\alpha \: \tcF \to \Fss}$ is an isomorphism of formal~$\Zp$\nobreakdash-modules defined over~$\rfk$.
Here, $\tcF$ is the formal group over~$\rfk$ obtained as the base change of~$\cF$ under the reduction map ${R_0 \to \rfk}$.
Two such deformations~$(\cF,\alpha)$ and~$(\cF',\alpha')$ are \emph{isomorphic}, if there exists an isomorphism~$\varphi$ in~${\Iso_{R_0} ( \cF , \cF')}$ with reduction~$\tvarphi$ such that ${\alpha'\circ \tvarphi = \alpha}$.

Denote by~$\Xss(R_0)$ the set of isomorphism classes of deformations of~$\Fss$ over~$R_0$.
Then, a consequence of the work of Gross and Hopkins is that there exists a bijection
\begin{equation}
  \label{eq:parametrization-X-e}
  \cM_{0} \rightarrow \Xss(R_0)
\end{equation}
that is functorial in~$R_0$, see \cite[Section~12]{HopGro94b} and \cite[Section~2.5]{HerMenRivII} for details.
Use this bijection to endow~$\Xss(R_0)$ with the distance coming from the distance on~$\cM_{0}$ inherited from~$R_0$.

Given~$\ss$ and~$\sspr$ in~$\tSups$, put
\begin{equation}
  \label{eq:82}
  \bfG_{\ss, \ss'}
  \=
  \Iso_{\F_{p^2}}(\Fss,\Fsspr).
\end{equation}
The group~$\Gss$, defined in Section~\ref{ss:random_walks}, equals~$\bfG_{\ss, \ss}$.
Given~$R_0$ as above, consider the map
\begin{equation}
  \label{eq:action_Iso}
  \begin{array}{rcl}
    \Iso_{\rfk}(\Fss,\Fsspr) \times \Xss(R_0)& \to & \Xsspr(R_0)
    \\
    (\beta, (\cF,\alpha)) & \mapsto & \beta\cdot(\cF,\alpha) \= (\cF,\beta \circ \alpha).
  \end{array}
\end{equation}
It induces an action of~$\Aut_{\rfk}(\Fss)$ on~$\Xss(R_0)$.

Consider
\begin{displaymath}
  \sK
  \=
  \{ \text{finite extensions of~$\Q_{p^2}$ inside~$\Cp$} \}
\end{displaymath}
as a directed set with respect to inclusion, and, for each~$\cK$ in~$\sK$, the parametrization~\eqref{eq:parametrization-X-e} with ${R_0 = \OK}$.
Taking a direct limit over~$\sK$ and then a completion yields a set~$\hDss$ parametrized by the maximal ideal~$\cM_p$ of the ring of integers~$\Op$ of $\Cp$, a map ${\bfG_{\ss, \ss'} \times \hDss \to \hDsspr}$ extending~\eqref{eq:action_Iso}, and an action of~$\Gss$ on~$\hDss$.
By~\cite[Proposition~14.13]{HopGro94b}, the action of the subgroup~$\Zp^{\times}$ of~$\Gss$ on~$\hDss$ is trivial.
Additionally, the following result follows from~\cite[Section~14]{HopGro94b}.

\begin{lemma}
  \label{l:action_isometries}
  For all~$\ss$ and~$\sspr$ in~$\tSups$ and~$\beta$ in~$\bfG_{\ss, \ss'}$, the map $\hDss \to \hDsspr$ given by~$x\mapsto \beta\cdot x$ as in~\eqref{eq:action_Iso}, is given by a rigid analytic isomorphism defined over~$\Z_{p^2}$ and, in particular, it is an isometry.
\end{lemma}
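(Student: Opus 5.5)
The plan is to deduce this from the Gross--Hopkins description of the deformation space, recalled above via the bijection~\eqref{eq:parametrization-X-e}. Functoriality of that bijection in~$R_0$ makes $\Xss$ representable by the formal scheme $\operatorname{Spf} \Z_{p^2}[[u]]$, the universal deformation space of the height-two formal $\Zp$-module~$\Fss$. Under this identification, $\hDss$ is the $\Cp$-points of the generic fibre, namely the open unit disc $\cM_p$. The same holds for~$\sspr$.

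First fix $\beta$ in $\bfG_{\ss,\sspr}$. Since $\beta$ is defined over~$\F_{p^2}$, the rule $(\cF,\alpha) \mapsto (\cF, \beta\circ\alpha)$ is a natural transformation of deformation functors on complete local Noetherian $\Z_{p^2}$-algebras with residue field containing~$\F_{p^2}$. By representability (Lubin--Tate; \cite[Section~14]{HopGro94b}), it is induced by a continuous $\Z_{p^2}$-algebra homomorphism $\Z_{p^2}[[u']] \to \Z_{p^2}[[u]]$. This homomorphism sends $u'$ to a power series $\phi_\beta(u)$ in $\Z_{p^2}[[u]]$ with $\phi_\beta(0)$ in the maximal ideal. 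On the $R_0$-points, for $R_0 = \OK$ with $\cK$ in~$\sK$, the map is $x \mapsto \phi_\beta(x)$. This is compatible with the direct limit and extends by continuity to the completion $\hDss$.

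Next, $\beta^{-1}$ lies in $\bfG_{\sspr,\ss}$ and gives a power series $\phi_{\beta^{-1}}$. Functoriality gives $\phi_{\beta^{-1}}\circ\phi_\beta = \mathrm{id}$ and $\phi_\beta\circ\phi_{\beta^{-1}} = \mathrm{id}$ as formal power series. So $x\mapsto\beta\cdot x$ is a rigid analytic isomorphism of open unit discs defined over~$\Z_{p^2}$.

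For the isometry claim, the key step is the elementary fact that a power series $\phi$ with coefficients in $\Z_{p^2}$ satisfies $|\phi(x)-\phi(y)|_p \le |x-y|_p$ for $x,y$ in $\cM_p$. This holds because $\phi(x)-\phi(y) = (x-y)\,\psi(x,y)$ with $\psi$ having integral coefficients, so $|\psi(x,y)|_p\le 1$. Applying the same bound to the inverse $\phi_{\beta^{-1}}$ gives the reverse inequality, hence equality.

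The main obstacle is the identification step: checking that the bijection~\eqref{eq:parametrization-X-e} is precisely the coordinate in which the action is given by integral power series, and that passing to $\hDss$ (direct limit then completion) preserves this. I would cite \cite[Section~12 and Section~14]{HopGro94b} for representability over $\Z_{p^2}[[u]]$ and for the equivariance of the group action with this model. The extension to the completion follows by uniform continuity of the power series on $\cM_p$.
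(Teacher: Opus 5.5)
Your proposal is correct and takes essentially the paper's route: the paper gives no argument beyond saying the lemma follows from \cite[Section~14]{HopGro94b}, and the content of that citation is what you derive, namely that the Yoneda/universal-property argument for the Lubin--Tate space $\operatorname{Spf}\Z_{p^2}[[u]]$ makes $x \mapsto \beta\cdot x$ a power series with coefficients in $\Z_{p^2}$, with inverse of the same type coming from $\beta^{-1}$. You additionally make explicit the elementary $1$-Lipschitz estimate for integral power series on $\cM_p$, applied to both the map and its inverse, which gives the isometry claim the paper leaves implicit.
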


Let~$\cK$ be in~$\sK$ and let~$x$ in~$\Xss(\OK)$ represent a deformation $(\cF,\alpha)$.
By the so-called Woods-Hole Theory, see \cite[Section~6]{LubSerTat64} or \cite[Theorem~4.1]{ColMcM10}, there is an elliptic curve~$E$ defined over~$\OK$ and an isomorphism ${\alpha \: \tE \rightarrow \ss}$ defined over the residue field of~$\cK$, inducing an isomorphism ${\halpha \: \widetilde{\cF}_E \rightarrow \Fss}$ such that $(\cF,\alpha)$ and $(\FE, \halpha)$ are isomorphic deformations.
The assignment $\Pi_{e,\cK}(x) \= E$ determines a map
\begin{equation}
  \label{eq:83}
  \Pi_{e,\cK} \: \Xss(\OK) \rightarrow \Dss
\end{equation}
that, after taking direct limits and completions and letting ${\delta_\ss \= \# \Aut(\ss)/2}$, induces a rigid analytic ramified covering ${\Piss \: \hDss\rightarrow \Dss}$ of degree~$\delta_\ss$.
Moreover, for all~$x$ in~$\hDss$ and~$E$ in~$\Dss$,
\begin{equation}
  \label{eq:84}
  \min \{ |x - x'|_p \: x' \in \Piss^{-1}(E) \}^{\delta_{\ss}}
  \le
  |j(\Piss(x)) - j(E)|_p
  \le
  \min \{ |x - x'|_p \: x' \in \Piss^{-1}(E) \},
\end{equation}
see \cite[Theorem~2.7]{HerMenRivII}.

\subsection{Homogeneous measures and partial Hecke orbits}
\label{ss:homogeneo}
Let~$\ss$ and~$\ss'$ be in~$\tSups$.
For each isogeny~$\phi$ in~$\Hom(\ss, \ss')$, denote by~$\overline{\phi}$ its dual isogeny.
Consider the $\Z$-bilinear map
\begin{displaymath}
  \begin{array}{rcl}
    \langle \ , \ \rangle \: \Hom(\ss, \ss') \times \Hom(\ss, \ss') & \to & \End(\ss)
    \\
    (\phi,\phi') & \mapsto & \langle \phi,\phi' \rangle \= \overline{\phi} \phi' + \overline{\phi'}{\phi}.
  \end{array}
\end{displaymath}
It takes values in~$\Z$ and induces the positive definite quadratic form
\begin{displaymath}
  \begin{array}{rcl}
    Q_{\ss, \ss'} \: \Hom(\ss, \ss') & \to & \Z
    \\
    \phi & \mapsto & Q_{\ss, \ss'}(\phi) \= \frac{1}{2} \langle \phi,\phi\rangle.
  \end{array}
\end{displaymath}

Put ${\bfR_{\ss, \ss'} \= \Hom_{\Fpalg}(\Fss,\Fsspr)}$.
The natural map ${\Hom(\ss, \ss') \to \bfR_{\ss, \ss'}}$, denoted by ${\phi \mapsto \hphi}$, extends to an isomorphism of $\Zp$\nobreakdash-modules
\begin{equation}
  \label{eq:85}
  \Hom(\ss, \ss') \otimes \Zp \xrightarrow{\sim} \bfR_{\ss, \ss'}.
\end{equation}
Extend accordingly~$Q_{\ss, \ss'}$ to a quadratic form on~$\bfR_{\ss, \ss'}$ taking values in~$\Zp$.
Then, for all~$\ss''$ in~$\Sups$, $\phi$ in~$\bfR_{\ss,\ss'}$, and~$\psi$ in~$\bfR_{\ss', \ss''}$,
\begin{equation}
  \label{eq:86}
  Q_{\ss, \ss''}(\psi \phi)
  =
  Q_{\ss',\ss''}(\psi)Q_{\ss, \ss'}(\phi).
\end{equation}

Put ${\Rss \= \bfR_{\ss,\ss}}$ and ${\Bss \= \Rss\otimes \Qp}$.
Then, ${\Bss}$ is a division quaternion algebra over~$\Qp$ and~$\Rss$ is naturally identified with its maximal $\Zp$-order.
Denote by ${g \mapsto \overline{g}}$ the involution of~$\Bss$ and by~$\nr$ its reduced norm, defined by ${\nr(g) \= g\overline{g}}$.
The function ${d_{\Bss} \: \Bss \times \Bss \to \Rzp}$ defined by ${d_{\Bss}(g,g') \= |\nr(g - g')|_p^{\frac{1}{2}}}$ is an ultrametric distance on~$\Bss$.
It makes~$\Bss$ into a topological algebra over~$\Qp$.
Endow~$\Rss$ with the induced distance and~$\bfR_{\ss, \ss'}$ with the unique distance such that, for every~$\varphi_0$ in~$\bfG_{\ss, \ss'}$, the map ${\Rss \to \bfR_{\ss, \ss'}}$ defined by ${\psi \mapsto \varphi_0\circ \psi}$ is an isometry.

For each nonzero~$\ell$ in~$\Zp$, the set~$S_\ell(\ss, \ss')$ defined by
\begin{displaymath}
  S_\ell(\ss, \ss')
  \=
  \{\varphi \in \bfR_{\ss, \ss'} \: Q_{\ss, \ss'}(\varphi)=\ell\},
\end{displaymath}
is called a \emph{supersingular sphere}.
It is nonempty and compact, and it is contained in~$\bfG_{\ss, \ss'}$ if and only if~$\ell$ belongs to~$\Zp^{\times}$ \cite[Proposition~6.2$(i)$]{HerMenRivII}.

The following formula describes the action on~$\Sups$ of the Hecke correspondences of index not divisible by~$p$.

\begin{lemma}[\textcolor{black}{\cite[(2.14)]{HerMenRivII}}]
  \label{l:formal_Hecke_formula}
  For all~$n$ in ${\N \ssetminus p \N}$, $e$ and~$e'$ in~$\tSups$, and~$x$ in~$\hDss$,
  \begin{equation}
    \label{eq:87}
    T_n(\Piss(x))|_{\Dsspr}
    =
    \frac{1}{\# \Aut(\sspr)} \sum_{\phi \in \Hom_n(\ss,\sspr)} \Pisspr(\hphi\cdot x).
  \end{equation}
\end{lemma}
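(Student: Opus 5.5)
The plan is to reduce \eqref{eq:87} to the moduli description of $T_n$ on the supersingular locus via Serre--Tate and Lubin--Tate deformation theory. Since both sides are continuous in $x$ and the construction of $\hDss$ is a completed direct limit over $\sK$, it suffices to prove the formula for $x$ in $\Xss(\OK)$ for every $\cK$ in $\sK$ large enough that all subgroups of order $n$ of the relevant curves are defined over $\cK$. Then I fix a representative: by the Woods--Hole theory recalled above, $x$ corresponds to an elliptic curve $E$ over $\OK$ with an isomorphism $\alpha \colon \tE \to \ss$, and $\Piss(x) = E$.

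The first step is to rewrite the left-hand side. Because $p \nmid n$, every subgroup $C$ of $E$ of order $n$ is étale. It therefore reduces isomorphically to a subgroup $\widetilde{C}$ of $\tE$ of order $n$, and the quotient $E/C$ has good reduction $\tE/\widetilde{C}$. The map $C \mapsto \widetilde{C}$ is a bijection between subgroups of order $n$ of $E$ and of $\tE$. Consequently $T_n(E)|_{\Dsspr}$ is the sum of $E/C$ over those $C$ for which $\tE/\widetilde{C} \cong \sspr$.

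The second step is to pass from subgroups to isogenies. Transporting through $\alpha$, a subgroup of order $n$ of $\ss$ with quotient isomorphic to $\sspr$ corresponds exactly to an isogeny $\phi$ in $\Hom_n(\ss,\sspr)$ with kernel that subgroup. Two such isogenies have the same kernel precisely when they differ by post-composition with an element of $\Aut(\sspr)$, so each subgroup is counted exactly $\#\Aut(\sspr)$ times. This accounts for the normalizing factor in \eqref{eq:87}.

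The third step, which I expect to be the main obstacle, is to identify $E/C$ with $\Pisspr(\hphi \cdot x)$. On the formal groups, the quotient map $E \to E/C$ induces an isomorphism $\FE \to \cF_{E/C}$, because $C$ is étale and hence meets the formal group trivially. Its reduction, composed with the isomorphism $\widetilde{E/C} \cong \sspr$ determined by $\phi$, is $\hphi \circ \halpha$. Since $\deg \phi$ is a $p$-adic unit, $\hphi$ lies in $\bfG_{\ss,\sspr}$. Thus the deformation attached to $E/C$ is isomorphic to $(\FE, \hphi \circ \halpha) = \hphi \cdot x$, as defined in \eqref{eq:action_Iso}. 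Applying Serre--Tate, which says deformations of $\tE$ are determined by deformations of its $p$-divisible group (here the formal group), gives $\Pisspr(\hphi \cdot x) = E/C$.

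The care needed in this step lies in checking that the identification is independent of the chosen representative and compatible with the $\Aut$-ambiguity of $\alpha$. Replacing $\alpha$ by $u\alpha$ with $u \in \Aut(\ss)$ permutes the set $\Hom_n(\ss,\sspr)$ via $\phi \mapsto \phi u^{-1}$. The sum is therefore unchanged, which gives \eqref{eq:87} for every $x$.
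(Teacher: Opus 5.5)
Your argument is correct. It is essentially the standard Serre--Tate/Woods--Hole argument behind \cite[(2.14)]{HerMenRivII}, which the paper imports without reproof: étale reduction of the order-$n$ subgroups when $p \nmid n$, the $\#\Aut(\sspr)$-to-one correspondence between $\Hom_n(\ss,\sspr)$ and kernels, and the formal-group isomorphism induced by $E \to E/C$, which identifies $E/C$ with $\Pisspr(\hphi\cdot x)$. Two small remarks: your final paragraph is only a consistency check, since $x$ already determines $(E,\alpha)$ up to isomorphism; and the fact that $\hphi$ lies in $\bfG_{\ss,\sspr}$ (that is, is defined over $\F_{p^2}$) comes from the choice of models in \cite[Lemma~2.5]{HerMenRivII}, not from $\deg \phi$ being a $p$-adic unit alone.
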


The natural action of $\Gss$ on $\bfR_{\ss', \ss}$, given by ${(g, \varphi) \mapsto g\circ\varphi}$, induces an action of the subgroup~$S_1(\ss, \ss)$ of~$\Gss$ on~$S_\ell(\ss', \ss)$.
This action is transitive and by isometries, so it possesses a unique invariant Borel probability measure~$\mu_\ell^{\ss, \ss'}$ and the support of this measure equals~$S_\ell(\ss', \ss)$ \cite[Lemma~5.3 and Proposition~6.2$(ii)$]{HerMenRivII}.

Let~$E$ be in~$\Sups$.
Then, $E$ is a \emph{formal \CM{} point} if $\End(\mathcal{F}_E)$ is isomorphic to an order in a quadratic extension of~$\Qp$.
Define the subgroup~$\NE$ of~$\Zp^{\times}$ as follows.
If~$E$ is not a formal \CM{} point, then ${\NE \= (\Zp^{\times})^2}$.
In the case where~$E$ is a formal \CM{} point, denote by~$\Aut(\FE)$ the group of automorphisms of~$\FE$ defined over~$\OQpalg$, and by~$\nr_E$ the norm map of the field of fractions of~$\End(\FE)$ to~$\Qp$.
Then,
\begin{displaymath}
  \NE
  \=
  \left\{ \nr_E \left( \varphi \right) \: \varphi \in \Aut(\FE) \right\}.
\end{displaymath}
In all cases, $\NE$ is a multiplicative subgroup of~$\Zp^{\times}$ containing~$(\Zp^{\times})^2$.
In particular, the index of~$\NE$ in~$\Zp^{\times}$ is at most two if~$p$ is odd, and at most four if ${p = 2}$.

As in Section~\ref{ss:Equidistribucion}, define, for each coset~$\coset$ in~$\Zp^{\times}/ \NE$, the partial Hecke orbit
\begin{equation}
  \label{eq:88}
  \corbit
  \=
  \bigcup_{n \in \coset \cap \N} \supp(T_n(E)).
\end{equation}
The closure~$\corbitc$ in~$\Sups$ of this set is compact \cite[Theorem~C]{HerMenRivII}.
Let~$\ss_0$ in~$\tSups$ be the image of~$E$ under the reduction map, and let~$\ss$ be in~$\tSups$.
For each~$x$ in~$\Pi_{\ss_0}^{-1}(E)$, evaluation at~$x$ defines a continuous map ${\Ev^{x, \ss} \: \bfG_{\ss_0, \ss} \rightarrow \hDss}$.
For each~$\ell$ in~$\coset$, the measure
\begin{equation}
  \label{eq:def_mu_E,e,coset}
  (\Pi_{\ss} \circ\Ev^{x, \ss})_*(\mu_\ell^{\ss_0, \ss})
\end{equation}
is independent of~$\ell$ and~$x$.
Denote it by~$\mu_{\coset}^{E, \ss}$ and put
\begin{equation}
  \label{eq:def_mu_coset}
  \mu_{\coset}^E
  \=
  \frac{24}{p-1} \sum_{\ss \in \tSups} \frac{1}{\# \Aut(\ss)} \mu_{\coset}^{E, \ss}.
\end{equation}
The support of~$\mu_{\coset}^E$ equals~$\corbitc$ \cite[Proposition~6.4]{HerMenRivII}, and the Eichler mass formula ensures that~$\mu_{\coset}^E$ is a Borel probability measure on~$\Ell(\Cp)$.
For all~$\ell$ in ${\N \ssetminus p\N}$, $E'$ in~$\OrbEc$, and~$E''$ in~$\overline{\Orb_{\ell\NE}(E)}$,
\begin{equation}
  \label{eq:89}
  \supp(T_\ell(E'))
  \subseteq
  \overline{\Orb_{\ell\NE}(E)},
  \supp(T_\ell(E''))
  \subseteq
  \OrbEc,
\end{equation}
\begin{equation}
  \label{eq:90}
  (\bT_\ell)_* \muE
  =
  \mu_{\ell\NE}^E,
  \text{ and }
  (\bT_\ell)_* \mu_{\ell\NE}^E
  =
  \muE,
\end{equation}
see \cite[Corollary~6.1]{HerMenRivII}.

The following equidistribution result is a special case of \cite[Theorem~C]{HerMenRivII}.

\begin{theorem}
  \label{t:equid_partial_hecke}
  Let~$E$ be in~$\Sups$ and~$\coset$ in~$\Zp^{\times}/ \NE$.
  Then, for every sequence $(n_j)_{j=1}^\infty$ in ${\coset \cap \N}$ tending to~$\infty$, the following weak convergence of measures holds,
  \begin{equation}
    \label{eq:91}
    \odelta_{T_{n_j}(E)} \to \mu_{\coset}^E \text{ as } j \to \infty.
  \end{equation}
\end{theorem}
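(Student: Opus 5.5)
The plan is to deduce the statement directly from \cite[Theorem~C]{HerMenRivII}. The only work is to check that the objects here match those in that theorem, and in particular that the limit measure there coincides with~$\mu_{\coset}^E$ as defined in~\eqref{eq:def_mu_coset}. First note that every~$n$ in~$\coset \cap \N$ is a $p$\nobreakdash-adic unit, so ${p \nmid n_j}$ for every~$j$, and Lemma~\ref{l:formal_Hecke_formula} applies to each~$T_{n_j}$.

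Fix~$x$ in~$\Pi_{\ss_0}^{-1}(E)$. For each~$\ss$ in~$\tSups$, Lemma~\ref{l:formal_Hecke_formula} rewrites the restriction of~$T_{n_j}(E)$ to~$\Dss$ as
\[
\frac{1}{\#\Aut(\ss)} (\Piss \circ \Ev^{x,\ss})_* \Bigl( \sum_{\phi \in \Hom_{n_j}(\ss_0,\ss)} \delta_{\hphi} \Bigr).
\]
Hence it suffices to prove that, after normalizing by~$\sigma(n_j)$, the counting measures of lattice points~$\hphi$ of degree~$n_j$ in~$\bfR_{\ss_0,\ss}$ equidistribute. The target is~$\frac{24}{p-1}$ times the homogeneous measure~$\mu_{n_j}^{\ss_0,\ss}$ on the sphere~$S_{n_j}(\ss_0,\ss)$, transported along~$\Ev^{x,\ss}$.

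To make sense of this although the spheres move with~$n_j$, one would test against locally constant functions on~$\bfR_{\ss_0,\ss}$. By compactness, pass to a subsequence with~$n_j$ converging $p$\nobreakdash-adically to some~$\ell$ in~$\coset$. The counting then becomes counting vectors of the quaternionic lattice~$\Hom(\ss_0,\ss)$ with prescribed norm~$n_j$ lying in a fixed congruence class modulo~$p^k$. These counts are Fourier coefficients of theta series of weight~$2$ attached to cosets of the lattice. The main term comes from the Eisenstein part and has size~$\asymp \sigma(n_j)$ times the local density, which reproduces the Haar measure~$\mu_\ell^{\ss_0,\ss}$ on the sphere. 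The cusp part is~$O(n_j^{1/2+\varepsilon})$ by Deligne's bound, so it is negligible.

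Summing over~$\ss$ with weights~$1/\#\Aut(\ss)$ and using the Eichler mass formula, the total normalized mass is~$1$, and the limit is exactly~$\mu_{\coset}^E$. The independence of~$\ell$ and~$x$ noted before~\eqref{eq:def_mu_coset} removes the dependence on the subsequence, which yields convergence of the whole sequence. The main obstacle is the local density computation. One must show that the Eisenstein main term, restricted to a $p$\nobreakdash-adic congruence class, is proportional to the $S_1(\ss,\ss)$\nobreakdash-invariant measure on the sphere. In particular, when~$E$ is a formal \CM{} point, the relevant fibres of~$\Ev^{x,\ss}$ give precisely the coset~$\coset$ of~$\NE$ rather than all of~$\Zp^\times$. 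This is the content of \cite[Theorem~C]{HerMenRivII}, which we invoke rather than reprove.
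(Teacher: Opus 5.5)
Your proposal is correct and matches the paper. The paper gives no separate argument: it records this statement as a special case of \cite[Theorem~C]{HerMenRivII}, and that citation is also the logical core of your proof. Your extra sketch is a consistent heuristic for what the cited theorem contains; it uses Lemma~\ref{l:formal_Hecke_formula}, an Eisenstein main term of size $\sigma(n_j)$ that reproduces the sphere measure with the Eichler mass normalization $\frac{24}{p-1}$, a cusp part made negligible by Deligne's bound, and a subsequence argument based on the independence of the limit from $\ell$ in~$\coset$.
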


Section~\ref{s:random_walks} relies on the following result.

\begin{theorem}[\textcolor{black}{\cite[Theorem~3.2]{HerMenRiv24}}]
  \label{t:no_atoms_hecke}
  For all~$E$ in~$\Sups$ and~$\coset$ in~$\Zp^{\times}/ \NE$, the measure~$\mu_{\coset}^E$ is nonatomic.
  That is, for every~$x$ in~$\Ell(\Cp)$, the equality ${\mu_{\coset}^E(\{x\}) = 0}$ holds.
\end{theorem}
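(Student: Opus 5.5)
The plan is to reduce the absence of atoms to a statement about Haar measure on the compact group $S_1(\ss_0,\ss_0)$, which we denote $\Gss^1 \= S_1(\ss_0,\ss_0)$ throughout. By~\eqref{eq:def_mu_coset}, $\mu_{\coset}^E$ is a finite convex combination of the measures $\mu_{\coset}^{E,\ss}$, so it suffices to fix~$\ss$ in~$\tSups$ and to show that $\mu_{\coset}^{E,\ss}$ has no atoms. Fix~$\ell$ in~$\coset$ and~$x$ in~$\Pi_{\ss_0}^{-1}(E)$, so that $\mu_{\coset}^{E,\ss} = (\Piss\circ\Ev^{x,\ss})_*\mu_\ell^{\ss_0,\ss}$. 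Suppose some~$y$ in~$\Dss$ is an atom. Since $\Piss$ has degree~$\delta_\ss$, the fiber $\Piss^{-1}(y)$ is finite. Hence there is~$x'$ in~$\hDss$ such that
\begin{equation}
  A \= \{\varphi \in S_\ell(\ss_0,\ss) \: \varphi\cdot x = x'\}
\end{equation}
has positive $\mu_\ell^{\ss_0,\ss}$-measure. In particular $A$ is nonempty, so we may fix~$\varphi_0$ in~$A$.

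Next we identify~$A$ group-theoretically. For~$\varphi$ in~$S_\ell(\ss_0,\ss)$, the condition $\varphi\cdot x = \varphi_0\cdot x$ holds if and only if $\varphi_0^{-1}\varphi$ lies in the stabilizer $\mathrm{St}(x)$ of~$x$ in~$\bfG_{\ss_0}$. By the Gross--Hopkins description (Section~\ref{ss:deformation-spaces}), an element~$g$ of~$\bfG_{\ss_0}$ fixes the deformation class~$x$ precisely when~$g$ lifts to an automorphism of the corresponding deformation $(\cF_E,\halpha)$. Thus $\mathrm{St}(x)$ is identified with $\Aut(\cF_E)$ inside $\bfG_{\ss_0}$. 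There are two cases:
\begin{itemize}
\item if~$E$ is not a formal \CM{} point, then $\mathrm{St}(x) = \Zp^\times$;
\item if~$E$ is a formal \CM{} point, then $\mathrm{St}(x)$ is the unit group of an order in a quadratic subfield of~$\Bss$.
\end{itemize}
In either case $\mathrm{St}(x)$ lies in a commutative subalgebra of~$\Bss$ of $\Qp$-dimension at most~$2$. Consequently $H \= \mathrm{St}(x)\cap \Gss^1$ is a closed subgroup of dimension at most~$1$ as a $p$-adic Lie group; it is even finite in the non-\CM{} case.

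Now transport the measure to~$\Gss^1$. By definition, $\mu_\ell^{\ss_0,\ss}$ is the image of normalized Haar measure on the conjugate group $\varphi_0\Gss^1\varphi_0^{-1}$ (the norm-one group acting on~$S_\ell(\ss_0,\ss)$) under the orbit map $h \mapsto h\varphi_0$. One checks that $h\varphi_0 \in \varphi_0\,\mathrm{St}(x)$ exactly when $\varphi_0^{-1}h\varphi_0 \in H$. Hence $\mu_\ell^{\ss_0,\ss}(A)$ equals the Haar measure of~$H$ inside~$\Gss^1$. Since $\Gss^1$ is a compact $p$-adic Lie group of dimension~$3$ and~$H$ has dimension at most~$1$, the subgroup~$H$ has infinite index. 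It therefore has infinitely many pairwise disjoint translates, all of equal Haar measure, so its Haar measure is~$0$. This contradicts $\mu_\ell^{\ss_0,\ss}(A)>0$, and the theorem follows.

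The main obstacle is the identification of $\mathrm{St}(x)$ with $\Aut(\cF_E)$ in the second paragraph. This requires the Gross--Hopkins fact that the stabilizer of a deformation is exactly the set of automorphisms that lift, together with the classification of endomorphism rings of formal modules of height~$2$: they are either~$\Zp$ or an order in a quadratic extension of~$\Qp$. I would cite~\cite{HopGro94b} directly for this step. The remaining steps are routine; the one point to keep track of is that $\Zp^\times$ acts trivially on~$\hDss$, which is harmless because $\Zp^\times\cap\Gss^1 = \{\pm1\}$.
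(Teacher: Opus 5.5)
The paper gives no proof of this statement. It is quoted from \cite[Theorem~3.2]{HerMenRiv24} and used as a black box, namely in Lemma~\ref{l:strongly_irred_action}. Your argument is a correct, self-contained proof, so it necessarily takes a different route from anything in this paper. Each of its steps checks out:
\begin{itemize}
\item the fibres of $\Piss$ are finite;
\item each fibre of $\Ev^{x,\ss}$ on $S_\ell(\ss_0,\ss)$ is a translate of $H=\mathrm{St}(x)\cap S_1(\ss_0,\ss_0)$;
\item a closed subgroup of positive Haar measure has finite index, hence is open.
\end{itemize}
Your reading of $\mu_\ell^{\ss_0,\ss}$ as a measure on $S_\ell(\ss_0,\ss)\subseteq\bfG_{\ss_0,\ss}$ is the one that makes \eqref{eq:def_mu_E,e,coset} meaningful. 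Since $S_\ell(\ss_0,\ss)=S_1(\ss,\ss)\varphi_0=\varphi_0 S_1(\ss_0,\ss_0)$ and Haar measure is conjugation invariant, left and right invariance conventions give the same measure.

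The step you flag as the main obstacle is elementary and needs less than you use. By \eqref{eq:action_Iso} and the definition of isomorphic deformations, $g$ fixes the class of $(\cF,\alpha)$ if and only if $g=\alpha\circ\widetilde{\varphi}\circ\alpha^{-1}$ for some $\varphi$ in $\Aut(\cF)$. So $\mathrm{St}(x)$ is a homomorphic image of $\Aut(\cF)$. That group is commutative, because $\varphi\mapsto\varphi'(0)$ embeds $\End(\cF)$ into $\Op$. Commutativity alone suffices, since every open subgroup of $S_1(\ss_0,\ss_0)$ is non-abelian. You therefore need neither the \CM{}/non-\CM{} classification nor a dimension count. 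This also covers points of the completion $\hDss$, which correspond to deformations over $\Op$.

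Beyond removing the dependence on \cite{HerMenRiv24}, your route gives more. The same stabilizer argument works on $\P^1(\Cp)$. A finite $\Hss(\ell)$-orbit would force an open stabilizer in $\Hss(\ell)$. Since $S_1(\ss,\ss)\subseteq\Hss(\ell)$, that stabilizer would meet $S_1(\ss,\ss)$ in an open, hence non-abelian, subgroup. But the stabilizer of a line in $\Cp^2$ consists of units of a proper $\Qp$-subalgebra of the division algebra $\Bss$, which is a field. This yields Lemma~\ref{l:strongly_irred_action} directly, whereas the paper deduces that lemma from this theorem together with the equidistribution in Corollary~\ref{c:Equidistribucion}.
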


\subsection{Spectral gap property on the supersingular locus}
\label{s:equid_in_sups_locus}

Let~$\delta$ be in~$\Rp$ and~$E$ in~$\Sups$.
Denote by~$\dconst$ be the real vector space of functions ${G \: \Ell(\Cp) \to \R}$ supported on~$\OrbEc$ such that, for every~$\ss$ in~$\tSups$, the function ${G \circ \Piss}$ is constant on the intersection of every ball of radius~$\delta$ of~$\hDss$ with~$\Piss^{-1}(\OrbEc)$.
Since~$\OrbEc$ is compact and ${\Piss \: \hDss \rightarrow \Dss}$ is a ramified covering of finite degree, the set~$\Piss^{-1}(\OrbEc)$ is compact and thus~$\dconst$ is finite dimensional.

\begin{lemma}
  \label{l:Tell_on_dconst}
  Let~$\delta$ be in~$\Rp$ and~$E$ in~$\Sups$.
  Then, for all~$\ell$ in ${\N \ssetminus p\N}$ and~$E'$ in~$\overline{\Orb_{\ell \NE}(E)}$, the inclusions ${\bT_\ell(\dconst) \subseteq W_{\delta,E'}}$ and ${\bT_\ell(W_{\delta,E'}) \subseteq \dconst}$ hold.
  Moreover, for all~$f$ in~$\dconst$ and~$\whf$ in~$W_{\delta,E'}$,
  \begin{equation}
    \label{eq:inv_measure_Tell}
    \int \bT_\ell f \dd \mu_{\ell\NE}^E
    =
    \int f \dd \muE
    \text{ and }
    \int \bT_\ell \whf \dd \muE
    =
    \int \whf \dd \mu_{\ell\NE}^E.
  \end{equation}
\end{lemma}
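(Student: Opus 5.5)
The plan is to prove the two inclusions and the two integral identities separately, using the formal Hecke formula (Lemma~\ref{l:formal_Hecke_formula}), the isometry property (Lemma~\ref{l:action_isometries}), the support inclusions~\eqref{eq:89}, and the pushforward identities~\eqref{eq:90}. A preliminary remark: for $E'$ in~$\overline{\Orb_{\ell\NE}(E)}$ one has $\overline{\Orb_{\ell\NE}(E)} = \overline{\Orb_{\NE}(E')}$ and $\OrbEc = \overline{\Orb_{\ell\NE}(E')}$. This follows from the fact that partial orbit closures are determined by the homogeneous measures and their supports, using~\cite[Theorem~C, Proposition~6.4]{HerMenRivII} together with $\mathbf{N}_{E'} = \NE$. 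The latter holds because $E'$ is a limit of Hecke images of~$E$ by isogenies of degree prime to~$p$, which identify the formal groups' endomorphism algebras. With this remark, the roles of $E$ and $E'$ are symmetric, so it suffices to prove $\bT_\ell(\dconst) \subseteq W_{\delta,E'}$ and the first identity in~\eqref{eq:inv_measure_Tell}.

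\textbf{Support.} Let $f$ be in~$\dconst$. If $E''$ lies outside $\overline{\Orb_{\ell\NE}(E)}$, I want $\bT_\ell f(E'') = 0$. It suffices that no point of $\supp(T_\ell(E''))$ lies in~$\OrbEc$. If $E''' \in \OrbEc$ belonged to $\supp(T_\ell(E''))$, then by duality $E''$ would belong to $\supp(T_\ell(E'''))$, which by~\eqref{eq:89} is contained in $\overline{\Orb_{\ell\NE}(E)}$. This is a contradiction, so $\bT_\ell f$ is supported on $\overline{\Orb_{\ell\NE}(E)}$.

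\textbf{Local constancy.} Fix $\ss'$ in~$\tSups$ and points $y, y'$ in~$\hDsspr$ with $|y - y'|_p < \delta$, both in $\Pi_{\ss'}^{-1}(\overline{\Orb_{\ell\NE}(E)})$. Lemma~\ref{l:formal_Hecke_formula} expresses $\bT_\ell f(\Pi_{\ss'}(y))$ as
\begin{equation*}
  \frac{1}{\sigma(\ell)} \sum_{\ss} \frac{1}{\#\Aut(\ss)} \sum_{\phi \in \Hom_\ell(\ss',\ss)} f(\Pi_\ss(\hphi \cdot y)),
\end{equation*}
and similarly for~$y'$. Since $p \nmid \ell$, each $\hphi$ lies in $\bfG_{\ss',\ss}$. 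By Lemma~\ref{l:action_isometries}, this gives $|\hphi\cdot y - \hphi \cdot y'|_p < \delta$, so the two points lie in a common ball of radius~$\delta$. By~\eqref{eq:89}, both $\Pi_\ss(\hphi\cdot y)$ and $\Pi_\ss(\hphi\cdot y')$ lie in~$\OrbEc$. Hence $f\circ\Pi_\ss$ takes equal values at them, and the two sums agree term by term.

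\textbf{Integral identity.} This is just duality of~\eqref{eq:90}: $\int \bT_\ell f \dd\mu_{\ell\NE}^E = \int f \dd (\bT_\ell)_*\mu_{\ell\NE}^E = \int f \dd \muE$. Here $(\bT_\ell)_*$ is the adjoint action on measures given by $\int \bT_\ell g\,\dd\nu$, which is the meaning of~\eqref{eq:90}. The second identity is obtained the same way from the other half of~\eqref{eq:90}, using that $\mu_{\ell\NE}^E = \mu^{E'}_{\ell\mathbf{N}_{E'}}$ by the preliminary remark.

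The main obstacle I expect is the preliminary identification of the orbit closures and measures attached to~$E'$ with those attached to~$E$. If this proves delicate, I would instead state and prove the second inclusion directly, running the same argument with $E$ and $E'$ swapped and using the second inclusion of~\eqref{eq:89} with $\Orb_{\ell\NE}$ in place of $\Orb_{\NE}$. That argument needs only $\ell^2 \in \NE$.
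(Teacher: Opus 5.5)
Your proposal is correct and follows the paper's proof, written out in more detail: the inclusions come from Lemma~\ref{l:formal_Hecke_formula}, Lemma~\ref{l:action_isometries} and~\eqref{eq:89} (with your duality argument for the supports), and the identities come from~\eqref{eq:90}. Your preliminary remark makes explicit the identification $\overline{\Orb_{\Nr_{E'}}(E')}=\overline{\Orb_{\ell\NE}(E)}$, which the paper uses tacitly; it follows directly from describing each orbit closure as the support of the homogeneous measure in~\eqref{eq:def_mu_coset} together with~\eqref{eq:86}, so you need neither $\Nr_{E'}=\NE$ nor the limit argument you sketch for it. One slip: the second identity in~\eqref{eq:inv_measure_Tell} follows directly from $(\bT_\ell)_*\muE=\mu^E_{\ell\NE}$, and the identification you invoke there should read $\muE=\mu^{E'}_{\ell\Nr_{E'}}$, not $\mu^E_{\ell\NE}=\mu^{E'}_{\ell\Nr_{E'}}$ (these two measures have disjoint supports when $\ell\notin\NE$).
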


\begin{proof}
  The inclusions ${\bT_\ell(\dconst) \subseteq W_{\delta,E'}}$ and ${\bT_\ell(W_{\delta,E'}) \subseteq \dconst}$ are a direct consequence of Lemmas~\ref{l:action_isometries} and~\ref{l:formal_Hecke_formula} and~\eqref{eq:89}.
  Additionally, for all~$f$ in~$\dconst$ and~$\whf$ in~$W_{\delta, E'}$, \eqref{eq:90} implies
  \begin{equation}
    \label{eq:92}
    \int \bT_\ell f \dd \mu_{\ell \NE}^E
    =
    \int f \dd (\bT_\ell)_* \mu_{\ell\NE}^E
    =
    \int f \dd \muE
  \end{equation}
  and
  \begin{equation}
    \label{eq:93}
    \int \bT_\ell \whf \dd \muE
    =
    \int \whf \dd (\bT_\ell)_* \muE
    =
    \int \whf \dd \mu_{\ell \NE}^E.
    \qedhere
  \end{equation}
\end{proof}

Endow~$\dconst$ with the inner product~$\langle \cdot, \cdot \rangle_{\delta, E}$ defined by ${\langle G, \hG \rangle_{\delta, E} \= \int G \hG \dd \muE}$, and denote by~$\| \cdot \|_{\delta, E}$ the corresponding norm.
Let~$\bfone_{\delta, E}$ be the element of~$\dconst$ that is constant equal to~$1$ on~$\OrbEc$, $\zeromean$ the subspace of~$\dconst$ defined by
\begin{equation}
  \label{eq:94}
  \zeromean
  \=
  \left\{ G \in \dconst \: \int G \dd \muE = 0 \right\},
\end{equation}
${P_0 \: \dconst \to \zeromean}$ the orthogonal projection defined by
\begin{equation}
  \label{eq:95}
  P_0(G)
  \=
  G - \left(\int G \dd \muE\right) \bfone_{\delta, E},
\end{equation}
and~${\| \cdot \|_{\delta,E,0}}$ the seminorm defined by ${\| G \|_{\delta,E,0} \= \| P_0 (G) \|_{\delta,E}}$.

For each~$\ell$ in ${\NE \cap \N}$, Lemma~\ref{l:Tell_on_dconst} implies that~$\bT_\ell$ acts as a self-adjoint linear map~$U_{E, \ell}$ from~$\dconst$ into itself, satisfying~$U_{E, \ell}(\bfone_{\delta, E}) = \bfone_{\delta, E}$ and~$\| U_{E, \ell} \|_{\delta,E} \le 1$.
In particular, $U_{E, \ell}$ leaves~$\zeromean$ invariant and ${\| U_{E, \ell} \|_{\delta,E,0} \le 1}$.
Similarly, for every~$\ell$ in ${\N \ssetminus p \N}$ the map $\bT_\ell^2$ acts on~$\dconst$ as a self-adjoint linear map~$V_{E, \ell}$, satisfying ${V_{E, \ell}(\bfone_{\delta, E}) = \bfone_{\delta, E}}$ and ${\| V_{E, \ell} \|_{\delta,E} \le 1}$.
In particular, $V_{E, \ell}$ leaves~$\zeromean$ invariant and ${\| V_{E, \ell} \|_{\delta,E,0} \le 1}$.
If~$\ell$ belongs to~$\NE$, then ${V_{E, \ell} = U_{E, \ell}^2}$.

The proof of Theorem~\ref{t:Equidistribucion}$(ii)$ in Section~\ref{ss:Equidistribucion} relies on the following theorem.

\begin{theorem}
  \label{t:hecke-dynamics_sups}
  For all~$\delta$ in~$\Rp$, $E$ in~$\Sups$, and~$\ell$ in ${\N \ssetminus p \N}$, the following properties hold.
  \begin{enumerate}
  \item[$(i)$]
    \label{p:check-contraction}
    The inequality ${\| V_{E, \ell} \|_{\delta,E,0} < 1}$ holds.
    If in addition~$\ell$ belongs to~$\NE$, then ${\| U_{E, \ell} \|_{\delta,E,0} < 1}$.
  \item[$(ii)$]
    There exists a constant~$C_1$ in~$\Rp$, depending only on~$\delta$ and~$\OrbEc$, such that, for all~$E_0$ in~$\OrbEc$, $F$ in~$\dconst$, and~$n$ in~$\N$,
    \begin{equation*}
      \left|\int F \dd \odelta_{T_\ell^{2n}(E_0)} - \int F \dd \muE\right|
      \le
      C_1 \| V_{E, \ell} \|_{\delta,E,0}^n\| F \|_{\delta,E}.
    \end{equation*}
  \item[$(iii)$]
    There exists a constant~$C_1'$ in~$\Rp$, depending only on~$\delta$ and~$\overline{\Orb_{\ell\NE}(E)}$, such that, for all~$E_0$ in~$\OrbEc$, $E'$ in~$\overline{\Orb_{\ell \NE}(E)}$, ${F}$ in~$W_{\delta,E'}$, and~$n$ in~$\N$,
    \begin{equation*}
      \left|\int F \dd \odelta_{T_\ell^{2n + 1}(E_0)} - \int F \dd \mu_{\ell \NE}^E \right|
      \le
      C_1' \| V_{E, \ell} \|_{\delta,E,0}^n \| F \|_{\delta,E'}.
    \end{equation*}

  \end{enumerate}
\end{theorem}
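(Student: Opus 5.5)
The plan is to deduce $(i)$ from Proposition~\ref{p:criterion_convergence_iterates} and to derive $(ii)$ and $(iii)$ from $(i)$ by finite-dimensional linear algebra.

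\textbf{Step 1: the operators $V_{E,m}$ form a commuting self-adjoint family with norms tending to zero.} For $(i)$, take $H = \zeromean$ with the inner product $\langle \cdot,\cdot\rangle_{\delta,E}$. This space is finite dimensional, hence a Hilbert space. Apply Proposition~\ref{p:criterion_convergence_iterates} to the commutative algebra generated by the operators $V_{E,m}=\bT_m^2$ for $m \in \N\ssetminus p\N$, and by $U_{E,m}$ for $m \in \NE \cap \N$. These operators commute by~\eqref{eq:general}. The proof of item~$(ii)$ of that proposition, applied to $\bT_\ell^2$, uses only three facts: self-adjointness, the relation $\|A^n\|=\|A\|^n$, and the decomposition of Lemma~\ref{l:combinatoria}. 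By Lemma~\ref{l:combinatoria}, $\bT_\ell^{2n}$ is a convex combination of operators $\bT_{m_i}$ whose indices are prime to $p$. We cannot use the $\bT_{m_i}$ individually on $\dconst$, since they need not preserve it. Instead we rewrite the combination as a convex combination of squared operators $\bT_{m}^2$, or of operators $\bT_m$ with $m \in \NE$, all of which act on $\zeromean$. The indices have the same square class as $\ell^{2n}$, namely $1$, so each such $\bT_{m_i}$ maps $\dconst$ into itself by~\eqref{eq:89}. Moreover, the operators $\bT_m$ with $m\in\NE\cap\N$ and $p\nmid m$ act self-adjointly on $\dconst$. So the argument reduces to showing
\[
\|\bT_m\|_{\delta,E,0}\to 0 \quad\text{as } m\to\infty,\ m\in\NE\cap\N .
\]

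\textbf{Step 2: proving the norm decay (the main obstacle).} Since $\zeromean$ is finite dimensional, strong convergence to zero implies operator-norm convergence. Strong convergence means that for each $G\in\zeromean$ and each $E_0\in\OrbEc$ we need $\bT_m G(E_0)=\int G \dd\odelta_{T_m(E_0)}\to 0$, uniformly in $E_0$. For a fixed $E_0$ this follows from Theorem~\ref{t:equid_partial_hecke} applied at $E_0$: the relevant coset for $E_0$ is identified with $\NE$, and $\mu^{E_0}_{\mathfrak N}$ coincides with $\muE$ by the independence statements. The function $G$ is locally constant, so its set of discontinuities is $\muE$-null and weak convergence applies. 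Uniformity in $E_0$ needs a separate argument, because weak convergence alone is pointwise. Here local constancy helps. By Lemmas~\ref{l:action_isometries} and~\ref{l:formal_Hecke_formula}, $T_m(E_0)$ depends only on the $\delta$-ball of a lift of $E_0$. By compactness, $\OrbEc$ is covered by finitely many such balls, which gives uniformity. Given the decay, item~$(ii)$ of Proposition~\ref{p:criterion_convergence_iterates} yields $\|U_{E,\ell}\|_{\delta,E,0}<1$ for $\ell\in\NE$. For general $\ell$, we run the same convex-combination argument with $\bT_\ell^{2N}$ and a large $N$, which yields $\|V_{E,\ell}\|_{\delta,E,0}<1$.

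\textbf{Step 3: items $(ii)$ and $(iii)$.} For $F\in\dconst$, write $F = (\int F \dd\muE)\bfone_{\delta,E}+P_0F$. Then
\[
\int F\dd\odelta_{T_\ell^{2n}(E_0)}-\int F\dd\muE = (V_{E,\ell}^nP_0F)(E_0).
\]
On the finite-dimensional space $\dconst$, the sup norm over $\OrbEc$ is bounded by $C_1\|\cdot\|_{\delta,E}$, with $C_1$ depending only on $\delta$ and $\OrbEc$. Hence the left side is at most $C_1\|V_{E,\ell}\|^n_{\delta,E,0}\|F\|_{\delta,E}$. For $(iii)$, write $\int F\dd\odelta_{T_\ell^{2n+1}(E_0)}=(V_{E,\ell}^n\bT_\ell F)(E_0)$. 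By Lemma~\ref{l:Tell_on_dconst}, $\bT_\ell F\in\dconst$ and $\int\bT_\ell F\dd\muE=\int F\dd\mu^E_{\ell\NE}$. It remains to bound $\|\bT_\ell F\|_{\delta,E}$ by a multiple of $\|F\|_{\delta,E'}$. This follows from the Cauchy--Schwarz inequality together with~\eqref{eq:90}: $\int(\bT_\ell F)^2\dd\muE\le\int\bT_\ell(F^2)\dd\muE=\int F^2\dd\mu^E_{\ell\NE}$. We then take $\|\cdot\|_{\delta,E'}$ to be computed against $\mu^E_{\ell\NE}=\mu^{E'}_{\mathcal N_{E'}}$. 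Step 3 then gives $(iii)$ with $C_1'$ the analogous sup-norm constant for $\overline{\Orb_{\ell\NE}(E)}$.
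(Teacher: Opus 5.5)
Your proof is correct, but you organize item $(i)$ differently from the paper.

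\textbf{How the paper does $(i)$.} It does not work on $\zeromean$ alone. It forms the orthogonal sum $W_0=\bigoplus_{\coset\in\Zp^\times/\NE}W_{\delta,E_\coset,0}$ over all cosets. Every $\bT_m$ with $p\nmid m$ acts on $W_0$ by permuting the summands, so Proposition~\ref{p:criterion_convergence_iterates} applies verbatim once $\|\bT_m\|_{W_0}\to0$ is extracted from Theorem~\ref{t:equid_partial_hecke}. The paper then reads off $\|V_{E,\ell}\|_{\delta,E,0}\le\|\bT_\ell\|_{W_0}^2<1$ and, for $\ell\in\NE$, $\|U_{E,\ell}\|_{\delta,E,0}\le\|\bT_\ell\|_{W_0}<1$.

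\textbf{How you do $(i)$.} You stay on $\zeromean$ and re-run the proof of Proposition~\ref{p:criterion_convergence_iterates}$(ii)$ for the subfamily $\{U_{E,m}\: m\in\NE\cap\N\}$. This needs every index in the expansion of $\bT_\ell^{2N}$ (or of $\bT_\ell^{N}$ when $\ell\in\NE$) to lie in $\NE$. That fact is not in the statement of Lemma~\ref{l:combinatoria}, which only controls prime divisors. Justify it from~\eqref{eq:general}, which shows each index has the form $\ell^{2N}/d^2$ (resp.\ $\ell^N/d^2$).

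\textbf{What each route buys.} Yours needs decay only along the single coset $\NE$, and self-adjointness only inside $\dconst$. Indeed, by~\eqref{eq:general}, $V_{E,\ell}=\bT_\ell^2$ is a convex combination of the $U_{E,(\ell/d)^2}$ with $d\mid\ell$. So the self-adjointness behind $\|V_{E,\ell}^N\|=\|V_{E,\ell}\|^N$ requires no adjointness between different cosets. The paper's route is shorter and yields the stronger bound $\|\bT_\ell\|_{W_0}<1$.

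\textbf{Item $(iii)$.} Your pointwise inequality $(\bT_\ell F)^2\le\bT_\ell(F^2)$, combined with~\eqref{eq:90}, gives $\|\bT_\ell F\|_{\delta,E}\le\|F\|_{\delta,E'}$ for every $F$ in $W_{\delta,E'}$, including $F$ with nonzero mean. This is cleaner than the paper's appeal to $\|\bT_\ell\|_{W_0}$, which is an operator norm on zero-mean functions only. Note that your argument gives $C_1'=C_1$, not a sup-norm constant for $\overline{\Orb_{\ell\NE}(E)}$ as your last sentence says.

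\textbf{One wording fix in Step~2.} The phrase ``$T_m(E_0)$ depends only on the $\delta$-ball of a lift'' should read ``$\bT_m G$ is constant on $\delta$-balls'', which is Lemma~\ref{l:Tell_on_dconst}. That is what reduces uniformity in $E_0$ to finitely many representative points.
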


\begin{proof}
  Fix~$\delta$ in~$\Rp$ and~$E$ in~$\Sups$.

  To prove item~$(i)$, choose, for each~$\coset$ in~$\Zp^{\times}/\NE$, a point~$E_\coset$ in~$\overline{\Orb_{\coset}(E)}$ and define~$W_0$ as the orthogonal sum $W_0 \= \bigoplus_{\coset \in \Zp^{\times}/\NE} W_{\delta,E_\coset, 0}$.
  Lemma~\ref{l:Tell_on_dconst} implies that~$(\bT_\ell)_{\ell \in \N \ssetminus p\N}$ is a commutative family of self-adjoint operators acting on the finite dimensional vector space~$W_0$.
  It follows that there exists an orthonormal basis~$\cB$ of~$W_0$ consisting of simultaneous eigenfunctions of each element of~$(\bT_\ell)_{\ell \in \N \ssetminus p\N}$.
  For every~$G$ in~$\cB$, Theorem~\ref{t:equid_partial_hecke} implies the convergence ${\bT_\ell G \to 0}$ as ${\ell \to \infty}$ with~$\ell$ in ${\N \ssetminus p\N}$.
  It follows that~$\|\bT_\ell\|_{W_0} \to 0$ as ${\ell \to \infty}$ with~$\ell$ in ${\N \ssetminus p\N}$.
  Combined with Proposition~\ref{p:criterion_convergence_iterates}$(ii)$, this yields, for every~$\ell$ in ${\N \ssetminus p\N}$, the inequality ${\|\bT_\ell\|_{W_0}<1}$.
  Since~$V_{E, \ell}$ is the restriction of~$\bT_\ell^2$ to $\zeromean$, it follows that $\|V_{E, \ell} \|_{\delta,E,0}<1$.
  If~$\ell$ belongs to~$\NE$, then~${U_{E, \ell}}$ is the restriction of~$\bT_\ell$ to $\zeromean$ and thus ${\| U_{E, \ell} \|_{\delta,E,0} < 1}$, as desired.

  To prove item~$(ii)$, note that the finite dimensionality of~$\dconst$ implies that there is~$C_1$ in~$\Rp$ such that, for every~$f$ in~$\dconst$,
  \begin{equation}
    \label{eq:96}
    \sup_{x \in \Ell(\Cp)} |f(x)|
    \le
    C_1 \|f\|_{\delta,E}.
  \end{equation}
  Then, for all~$E_0$ in~$\OrbEc$ and~$F$ in~$\dconst$,
  \begin{multline}
    \label{eq:97}
    \left| \int F \dd \odelta_{T_\ell^{2n}(E_0)} - \int F \dd \muE \right|
    =
    \left| (V_{E, \ell}^n F) (E_0) - \langle F, \bfone_{\delta, E} \rangle_{\delta, E} \right|
    =
    |V_{E, \ell}^n (P_0(F))(E_0)|
    \\ \le
    C_1 \|V_{E, \ell}^n (P_0(F)) \|_{\delta, E}
    \le
    C_1 \|V_{E, \ell} \|_{\delta, E, 0}^n \|P_0(F) \|_{\delta, E, 0}
    \le
    C_1 \|V_{E, \ell} \|_{\delta, E, 0}^n \|F\|_{\delta, E}.
  \end{multline}

  To prove item~$(iii)$, let~$E_0$ be in~$\OrbEc$, $E'$ in~$\overline{\Orb_{\ell \NE}(E)}$, $F$ in~$W_{\delta,E'}$, and~$C_1$ as above.
  Item~$(ii)$ combined with Lemma~\ref{l:Tell_on_dconst}, yields, for every~$n$ in~$\N$,
  \begin{multline}
    \label{eq:98}
    \left|\int F \dd \odelta_{T_\ell^{2n + 1}(E_0)} - \int F \dd \mu_{\ell \NE}^E\right|
    =
    \left|\int \bT_\ell F \dd \odelta_{T_\ell^{2n}(E_0)} - \int \bT_\ell F \dd \muE\right|
    \\ \le
    C_1 \| V_{E, \ell} \|_{\delta, E, 0}^n \|\bT_\ell F\|_{\delta,E}
    \le
    (C_1 \|\bT_\ell\|_{W_0}) \| V_{E, \ell} \|_{\delta, E, 0}^n \|F\|_{\delta,E'}.
  \end{multline}
  This proves item~$(iii)$ with ${C_2 = C_1 \|\bT_\ell\|_{W_0}}$ and completes the proof of the theorem.
\end{proof}

\begin{proof}[Proof of Theorem~\ref{t:Equidistribucion}$(ii)$]
  Let~$\varepsilon$ be in~$\Rp$ and choose~$E'$ in~$\overline{\Orb_{\ell \NE}(E)}$.
  Let~$\delta$ in~$\Rp$ be sufficiently small, so there are~$F$ in~$\dconst$ and~$\hF$ in~$W_{\delta,E'}$ satisfying
  \begin{equation}
    \label{eq:99}
    \sup_{\OrbEc} |f - F|
    \le
    \frac{\varepsilon}{3}
    \text{ and }
    \sup_{\overline{\Orb_{\ell \NE}(E)}} |f - \hF|
    \le
    \frac{\varepsilon}{3}.
  \end{equation}
  Let~$C_1$ and~$C_1'$ be given by items~$(ii)$ and~$(iii)$ of Theorem~\ref{t:hecke-dynamics_sups}, respectively, and let~$N$ in~$\N$ be sufficiently large so that ${C_1 \| V_{E, \ell} \|_{\delta,E,0}^N\| F \|_{\delta,E} \le \frac{\varepsilon}{3}}$ and ${C_1' \| V_{E, \ell} \|_{\delta,E,0}^N\| \hF \|_{\delta,E'} \le \frac{\varepsilon}{3}}$.
  By~\eqref{eq:89}, for all~$E_0$ in~$\OrbEc$ and~$n$ in~$\N$ satisfying ${n \ge N}$,
  \begin{multline}
    \label{eq:100}
    \left| \bT_\ell^{2n}(f)(E_0) - \int f \dd \muE \right|
    =
    \left| \int f \dd \odelta_{T_\ell^{2n}(E_0)} - \int f \dd \muE \right|
    \\ \le
    \left| \int F \dd \odelta_{T_\ell^{2n}(E_0)} - \int F \dd \muE \right| + \frac{2\varepsilon}{3}
    \le
    C_1 \| V_{E, \ell} \|_{\delta,E,0}^n\| F \|_{\delta,E} + \frac{2\varepsilon}{3}
    \le
    \varepsilon
  \end{multline}
  and
  \begin{multline}
    \label{eq:101}
    \left| \bT_\ell^{2n + 1}(f)(E_0) - \int f \dd \mu_{\ell \NE}^E \right|
    =
    \left| \int f \dd \odelta_{T_\ell^{2n + 1}(E_0)} - \int f \dd \mu_{\ell \NE}^E \right|
    \\ \le
    \left| \int \hF \dd \odelta_{T_\ell^{2n + 1}(E_0)} - \int \hF \dd \mu_{\ell \NE}^E \right| + \frac{2\varepsilon}{3}
    \le
    C_1' \| V_{E, \ell} \|_{\delta,E,0}^n\| \hF \|_{\delta,E'} + \frac{2\varepsilon}{3}
    \le
    \varepsilon.
  \end{multline}
  Since~$\varepsilon$ in~$\Rp$ is arbitrary, this completes the proof of the theorem.
\end{proof}

\section{Central limit theorems}
\label{s:TLC}

This section proves Theorem~\ref{t:TLC} in Section~\ref{ss:TLC} and its variants (Remark~\ref{rmk:case_ell_not_in_NE}).
After preliminaries on Markov chains, Section~\ref{t:CLT_Markov} recalls the (quenched) central limit theorem of Derriennic and Lin in~\cite{DL03}.
Section~\ref{Markov} introduces a Markov chain associated with~$T_\ell$ and establishes some lemmas.
The proof of Theorem~\ref{t:TLC} and its variants (Remark~\ref{rmk:case_ell_not_in_NE}) occupy Section~\ref{sec:proof_TLC}.
Besides~\cite{DL03}, it relies on the spectral gap property given by Theorem~\ref{t:hecke-dynamics_sups}$(i)$.

\subsection{Markov chains}
See, \emph{e.g.}, \cite[\emph{Chapitre}~V]{Nev64} for background on Markov chains.

Let $(X,\cB)$ be a measurable space.
A \emph{transition probability on~$X$} is a function ${P \: X \times \cB \to [0,1]}$ such that, for every~$A$ in~$\cB$, the map ${x \mapsto P(x,A)}$ is measurable, and, for every~$x$ in~$X$, the map $A\mapsto P(x,A)$ is a probability measure denoted by~$P_x$.
The \emph{Markov operator~$P$} acting on the space of measurable and bounded functions ${f \: X \to \R}$, is defined by
\begin{equation}
  \label{eq:102}
  Pf(x)
  \=
  \int f(y) \dd P_x(y).
\end{equation}
The same formula defines~$Pf$ for each measurable function~$f$ defined on~$X$ that takes nonnegative values and is not necessarily bounded.

Denote by~$\cB^{\otimes \Nz}$ the product $\sigma$-algebra on~$X^{\Nz}$ and, for every~$n$ in~$\Nz$, denote by ${X_n \: X^{\Nz} \to X}$ the $n$-th projection map.
Given a probability measure~$\nu$ on $(X,\cB)$, denote by~$\P_\nu$ the probability measure on $(X^{\Nz}, \cB^{\otimes \Nz})$ uniquely determined by the following property: For every~$A$ in~$\cB$,
\begin{equation}
  \label{eq:103}
  \P_\nu\left(A \times \prod_{i=1}^\infty X \right)
  =
  \nu(A)
\end{equation}
and, for every~$T$ in~$\N$ and every sequence~$(A_i)_{i=0}^\infty$ of measurable sets with ${A_i= X}$ whenever $i>T$,
$$
\P_\nu\left(\prod_{i=0}^\infty A_i\right)
=
\int_{A_0} \int_{A_1} \cdots \int_{A_T} \dd P_{x_{T-1}}(x_T) \cdots \dd P_{x_0}(x_1) \textbf{} \dd \nu(x_0),
$$
see, \emph{e.g.}, \cite[\emph{Chapitre}~V, \emph{Corollaire}~II]{Nev64}.
Thus, the distribution of~$X_0$ for~$\P_\nu$ is~$\nu$ and, for every~$n$ in~$\Nz$ and~$A$ in~$\cB$,
\begin{equation}
  \label{eq:104}
  \P_\nu[X_{n + 1} \in A \mid X_n = x]
  =
  P(x, A).
\end{equation}
That is, ${(X_n)_{n = 0}^\infty}$ is a Markov chain with initial distribution~$\nu$ and transition probability~$P$.
Denoting by~$\E_\nu$ the expectation with respect to~$\P_\nu$, the Markov property reads, for every~$n$ in~$\Nz$ and every bounded and measurable ${f \: X \to \R}$,
\begin{equation}
  \label{eq:105}
  \E_\nu [f(X_{n + 1})|X_n, \ldots, X_0]
  =
  Pf(X_n)
  \text{ almost everywhere for~$\P_\nu$.}
\end{equation}
The map $f\mapsto f(X_0)$ defines an isometric embedding $L^2(X,\nu) \to L^2(X^{\Nz},\P_\nu)$.
By the definition of~$\P_\nu$, for every~$T$ in~$\Nz$ and every nonnegative measurable function ${F \: X^{\Nz} \to \R}$ that depends only on the first $T+1$ coordinates~$(x_0,x_1,\ldots,x_T)$ of~$(x_0,x_1,\ldots)$ in~$X^{\Nz}$,
\begin{equation}
  \label{eq:int_F_dP_nu}
  \int F \dd \P_\nu
  \\ =
  \int \cdots \iint F(x_0, x_1, \ldots) \dd P_{x_{T-1}}(x_T) \cdots \dd P_{x_0}(x_1) \dd \nu(x_0).
\end{equation}

A measure~$\mu$ on $(X,\cB)$ is \emph{$P$-invariant} if, for every bounded and measurable function ${f \: X \to \R}$, the equality ${\int Pf \dd \mu = \int f \dd \mu}$ holds.
In this case, ${P}$ induces a contraction ${P \: L^2(X,\mu) \to L^2(X,\mu)}$.
The measure~$\mu$ is $P$-\emph{ergodic} if every bounded and measurable function ${f \: X \to \R}$ satisfying ${Pf=f}$ almost everywhere is constant almost everywhere.
In terms of the \emph{shift map ${\sigma \: X^{\Nz} \to X^{\Nz}}$}, defined by
\begin{equation}
  \label{eq:106}
  \sigma(x_0, x_1, \ldots)
  \=
  (x_1, \ldots),
\end{equation}
a measure~$\mu$ on $(X,\cB)$ is $P$-invariant if and only if~$\P_\mu$ is invariant for~$\sigma$, and~$\mu$ is $P$-ergodic if and only if~$\P_\mu$ is ergodic for~$\sigma$.

Let~$\mu$ be a probability measure on $(X,\cB)$ that is $P$-invariant and $P$-ergodic, let~$f$ be in~$L^2(X,\mu)$, and, for every~$n$ in~$\N$, define ${\hS_n(f) \: X^{\Nz} \to \R}$ as
\begin{equation}
  \label{eq:hatS_n(f)}
  \hS_n(f)
  \=
  \sum_{k=0}^{n - 1}f(X_k).
\end{equation}
The sequence of random variables $(f(X_n))_{n=0}^\infty$ satisfies the strong law of large numbers by the pointwise ergodic theorem of Birkhoff.
Namely, for $\P_\mu$-almost every~$\ux$ in~$X^{\Nz}$,
\begin{equation}
  \label{eq:107}
  \frac{1}{n} \hS_n(f)(\ux) \to \int f \dd \mu \text{ as } n \to \infty.
\end{equation}
When ${\int f \dd \mu = 0}$ and for a given probability measure~$\nu$ on~$(X, \cB)$, one can ask about the convergence of $\frac{1}{\sqrt{n}} \hS_n(f)$ in law under~$\P_\nu$.
That is, one can ask about the weak convergence of the sequence $\left( \left(\frac{1}{\sqrt{n}} \hS_n(f) \right)_*(\P_\nu) \right)_{n = 0}^\infty$ of Borel probability measures on $\R$.
Given~$\sigma$ in~$\R$ and~$f$ in~$L^2(X,\mu)$ with ${\sigma > 0}$ and ${\int f \dd \mu = 0}$, the \emph{central limit theorem under $\P_\nu$ with variance $\sigma^2$ holds for~$f$}, if
\begin{equation}
  \label{eq:108}
  \left(\frac{1}{\sqrt{n}} \hS_n(f) \right)_*(\P_\nu) \to \cN_{0,\sigma^2} \text{ weakly as } n \to \infty,
\end{equation}
where $\cN_{0,\sigma^2}$ is the centered Gauss normal distribution with variance~$\sigma^2$ defined by~\eqref{eq:Gauss_dist}.
Such a central limit theorem is known to hold for several types of functions~$f$.
See, \emph{e.g.}, \cite{DL01} and the references therein.

The following theorem is a special case of a result of Derriennic and Lin in~\cite{DL03}.
For each~$x$ in~$X$, denote by $\P_{x}$ the associated probability measure~$\P_\nu$ on $X^{\Nz}$ with ${\nu=\delta_{x}}$.

\begin{theorem}[\textcolor{black}{\cite[Theorem]{DL03}}]
  \label{t:CLT_Markov}
  Let $(X,\cB,\mu)$ be a probability space and~$P$ a transition probability on~$X$ such that~$\mu$ is $P$-invariant and $P$-ergodic.
  Let~$f$ in~$L^2(X,\mu)$ be such that ${\int f \dd \mu = 0}$ and $\sum_{k=0}^\infty P^kf$ converges in~$L^2(X,\mu)$.
  Then,
  \begin{equation}
    \label{eq:sigma_f_Markov}
    \int f^2\dd\mu +2\sum_{k=1}^\infty \int f \cdot P^kf \dd\mu
  \end{equation}
  converges to a nonnegative real number.
  If this number is strictly positive, then, for $\mu$-almost every~$x$ in~$X$, the central limit theorem under~$\P_x$ with variance~\eqref{eq:sigma_f_Markov} holds for $f$.
  Moreover, if~\eqref{eq:sigma_f_Markov} is zero, then, for $\mu$-almost every~$x$ in~$X$, the following weak convergence holds,
  \begin{equation}
    \label{eq:109}
    \left(\frac{1}{\sqrt{n}} \hS_n(f) \right)_*(\P_x) \to \delta_0 \text{ as } n \to \infty.
  \end{equation}
\end{theorem}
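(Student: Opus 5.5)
The plan is a Gordin-type martingale approximation, carried out pathwise so that the conclusion holds under $\P_x$ for $\mu$-almost every~$x$, and not only under $\P_\mu$.

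\emph{Construction.} Put $g \= \sum_{k=0}^\infty P^k f$, which lies in $L^2(X,\mu)$ by hypothesis. Since $P$ is a contraction on $L^2(X,\mu)$, we have $g - Pg = f$.

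\emph{Variance.} Using $g = f + Pg$ and the self-contained identity $\langle f, g\rangle = \|f\|^2 + \sum_{k\ge1}\langle f, P^k f\rangle$, the series~\eqref{eq:sigma_f_Markov} converges. Its value is
\begin{equation*}
  \langle f, g + Pg \rangle = \langle g - Pg, g + Pg\rangle = \|g\|_2^2 - \|Pg\|_2^2 \ge 0 .
\end{equation*}

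\emph{Martingale decomposition.} Define $D_k \= g(X_k) - Pg(X_{k-1})$ for $k \ge 1$. By the Markov property~\eqref{eq:105}, $(D_k)$ is a martingale difference sequence for the natural filtration $\cF_k = \sigma(X_0, \ldots, X_k)$. This holds under $\P_\mu$ and under every $\P_x$. Telescoping gives
\begin{equation*}
  \hS_n(f) = \sum_{k=1}^{n} D_k + Pg(X_0) - Pg(X_n) + \bigl(f(X_0) - f(X_n)\bigr) + (\text{bounded index shift}).
\end{equation*}
More precisely, $\hS_n(f) = \sum_{k=0}^{n-1}(g - Pg)(X_k) = \sum_{k=1}^{n-1} D_k + g(X_0) - Pg(X_{n-1})$.

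\emph{Boundary terms.} These are negligible pathwise. Since $Pg \in L^2(X,\mu)$ and $\mu$ is invariant, we have $\sum_n \P_\mu\bigl(|Pg(X_n)| > \varepsilon\sqrt{n}\bigr) = \sum_n \mu\bigl(|Pg|^2 > \varepsilon^2 n\bigr) < \infty$. Borel--Cantelli then gives $Pg(X_n)/\sqrt{n} \to 0$ $\P_\mu$-almost surely. Because $\P_\mu = \int \P_x \dd\mu(x)$ by~\eqref{eq:int_F_dP_nu}, the same convergence holds $\P_x$-almost surely for $\mu$-almost every~$x$. The term $g(X_0)/\sqrt{n}$ is trivially negligible under each $\P_x$.

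\emph{Quenched martingale CLT.} Apply the martingale central limit theorem (Brown / McLeish form) under $\P_x$. Two conditions must be checked.
\begin{enumerate}
\item Conditional variances: $\frac1n\sum_{k=1}^n \E[D_k^2 \mid \cF_{k-1}] \to \sigma^2$ in $\P_x$-probability. Here $\E[D_k^2 \mid \cF_{k-1}] = h(X_{k-1})$ with $h \= P(g^2) - (Pg)^2 \in L^1(X,\mu)$ and $\int h \dd\mu = \|g\|^2 - \|Pg\|^2$. Birkhoff's theorem for the shift~$\sigma$, which is ergodic for $\P_\mu$, gives the almost sure convergence under $\P_\mu$, hence under $\P_x$ for $\mu$-almost every~$x$.
\item Conditional Lindeberg condition: $\frac1n\sum_k \E\bigl[D_k^2\bfone_{\{|D_k| > \varepsilon\sqrt n\}} \mid \cF_{k-1}\bigr] \to 0$.
\end{enumerate}

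The Lindeberg condition is the main obstacle, since the truncation level depends on~$n$. I would handle it by monotonicity. For fixed $M$ and $n \ge M^2/\varepsilon^2$, the summand is at most $h_M(X_{k-1})$, where
\begin{equation*}
  h_M(y) \= \E_y\bigl[(g(X_1) - Pg(y))^2\,\bfone_{\{|g(X_1) - Pg(y)| > M\}}\bigr] \in L^1(X,\mu).
\end{equation*}
Birkhoff's theorem shows the Ces\`aro averages converge to $\int h_M \dd\mu$ for $\mu$-almost every starting point, simultaneously for all $M \in \N$ (a countable family). Since $\int h_M \dd\mu \to 0$ as $M \to \infty$ by dominated convergence, the limsup is~$0$.

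\emph{Conclusion.} With both conditions in hand, $\frac{1}{\sqrt n}\sum_{k\le n} D_k \to \cN_{0,\sigma^2}$ in law under $\P_x$. Combined with the negligible boundary terms and Slutsky's lemma, this gives the statement. When $\sigma^2 = 0$, the same estimates give $\E_x\bigl[(\tfrac1{\sqrt n}\sum D_k)^2\bigr] \to 0$ along the conditional-variance convergence, which yields the convergence to $\delta_0$ in~\eqref{eq:109}.
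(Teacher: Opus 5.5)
Your proof is correct. It differs from the paper only in that the paper gives no argument for this statement: it quotes it as a special case of the theorem of Derriennic and Lin~\cite{DL03}, which covers a wider class of functions~$f$.

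Under the hypothesis actually stated here, $g = \sum_{k \ge 0} P^k f$ is an exact $L^2$ solution of $g - Pg = f$. Your argument then gives a complete, self-contained proof of exactly this special case (Gordin's martingale approximation, run pathwise). Its ingredients are:
\begin{itemize}
\item the martingale differences $D_k = g(X_k) - Pg(X_{k-1})$;
\item Birkhoff's theorem for the ergodic shift under~$\P_\mu$, applied to $h(X_{k-1})$ and, simultaneously for countably many~$M$, to the truncated conditional variances $h_M(X_{k-1})$;
\item Borel--Cantelli for the boundary term $Pg(X_{n-1})/\sqrt{n}$;
\item the disintegration $\P_\mu = \int \P_x \dd\mu(x)$, which transfers $\P_\mu$-almost sure statements to $\P_x$ for $\mu$-almost every~$x$.
\end{itemize}

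The citation buys generality the paper never uses: in Section~\ref{sec:proof_TLC} the series $\sum_k P^k f$ converges geometrically thanks to the spectral gap of Theorem~\ref{t:hecke-dynamics_sups}$(i)$. Your route buys transparency. In particular, it shows that the restriction to $\mu$-almost every starting point comes only from the disintegration step. That is precisely the restriction Lemma~\ref{l:G_n_in_dconst} later removes on~$\OrbEc$.

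A few points in your write-up should be tightened.
\begin{itemize}
\item The martingale property and the square integrability of~$D_k$ hold under~$\P_x$ for $\mu$-almost every~$x$, not for every~$x$. The function $g$ is only defined $\mu$-almost everywhere, and you need $P^k(g^2)(x) < \infty$ for all~$k$. This holds off a $\mu$-null set because $\int P^k(g^2) \dd\mu = \|g\|_2^2$.
\item Your treatment of the case $\sigma^2 = 0$ (``along the conditional-variance convergence'') does not justify convergence of expectations. The clean argument is that $h \ge 0$ and $\int h \dd\mu = 0$ force $h = 0$ $\mu$-almost everywhere. Hence $\E_x[(\sum_{k=1}^n D_k)^2] = \sum_{k=0}^{n-1} P^k h(x) = 0$ for $\mu$-almost every~$x$, and only the negligible boundary terms remain.
\item Invoke the martingale central limit theorem in the form with deterministic normalization $\sqrt{n}$ and both conditions in probability (McLeish, or Hall--Heyde, Corollary~3.1). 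That is exactly what your two verifications supply, whereas Brown's version normalizes by $\E_x[\sum_{k \le n} D_k^2]$.
\end{itemize}
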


\begin{remark}
  \label{rmk:CLT_atomic_is_trivial}
  Suppose~$\mu$ is a Dirac measure on~$X$ and let~$f$ be in~$L^2(X,\mu)$ with ${\int f \dd \mu=0}$.
  Then, ${L^2(X,\mu) = \R}$, ${f = 0}$, and hence, for every measure~$\nu$ on $(X,\cB)$, the sequence $\left( \frac{1}{\sqrt{n}} \hS_n(f) \right)_{n = 1}^\infty$ converges in law under $\P_\nu$ to $\delta_0$.
\end{remark}

\subsection{A Markov chain associated with a Hecke correspondence}
\label{Markov}
Throughout the rest of this section, fix a prime number~$p$.
For each~$\ell$ in~$\N$, let~$P_\ell$ be transition probability on~$\Ell(\Cp)$ defined, for all~$E$ in~$\Ell(\Cp)$ and Borel subset~$A$ of~$\Ell(\Cp)$, by
\begin{equation}
  \label{eq:P_ell_def}
  P_\ell(E,A)
  \=
  \odelta_{T_\ell(E)}(A).
\end{equation}
For each~$E$ in~$\Ell(\Cp)$, denote by~$P_{\ell, E}$ the Borel probability measure on~$\Ell(\Cp)$ defined by ${P_{\ell, E}(A) \= P_{\ell}(E, A)}$.
For every ${f \: \Ell(\Cp) \to \R}$,
\begin{equation}
  \label{eq:P_ellf}
  P_\ell f (E)
  =
  \int_{\Ell(\Cp)} f \dd \odelta_{T_\ell(E)}
  =
  \bT_\ell f(E).
\end{equation}

The following result is a direct consequence of~\eqref{eq:inv_measure_Tell} in Lemma~\ref{l:Tell_on_dconst} and Theorem~\ref{t:hecke-dynamics_sups}.

\begin{lemma}
  \label{l:invariance_and_ergodicity}
  Let~$E$ be in~$\Sups$ and~$\ell$ an integer in~$\NE$ satisfying ${\ell \ge 2}$.
  Then, $P_\ell$ is a transition probability on~$\Ell(\Cp)$ and~$\muE$ is $P_\ell$\nobreakdash-invariant and $P_\ell$\nobreakdash-ergodic.
\end{lemma}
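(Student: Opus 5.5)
That $P_\ell$ is a transition probability is immediate from the definitions. For every $E'$ in $\Ell(\Cp)$ the divisor $T_\ell(E')$ is effective of degree $\sigma(\ell)\ge 1$ by~\eqref{eq:deg_of_T_ell}, so $P_{\ell,E'}=\odelta_{T_\ell(E')}$ is a Borel probability measure. For measurability in $E'$, take a Borel set $A$ and write
\begin{displaymath}
  E'\mapsto P_\ell(E',A)=\frac{1}{\sigma(\ell)}\sum_{C}\bfone_A(E'/C).
\end{displaymath}
Locally on the $j$-line, the points of $T_\ell(E')$, counted with multiplicity, are the roots of the modular polynomial $\Phi_\ell(j(E'),Y)$. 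The roots depend continuously on $j(E')$ as an unordered tuple, so for $f$ continuous and bounded the map $E'\mapsto \bT_\ell f(E')$ is continuous. A monotone class argument then passes from continuous $f$ to indicator functions of Borel sets.

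For invariance, I first note that $\muE$ is supported on the compact set $\OrbEc$, and by~\eqref{eq:89} with $\ell\in\NE$ (so $\ell\NE=\NE$) the correspondence $T_\ell$ maps $\OrbEc$ into itself. The identity $\int \bT_\ell f\dd\muE=\int f\dd\muE$ for bounded Borel $f$ is exactly~\eqref{eq:90} with $\ell\NE=\NE$, namely $(\bT_\ell)_*\muE=\muE$, read as $\int \bT_\ell f\dd\muE=\int f\dd(\bT_\ell)_*\muE$. If one prefers to deduce it only from~\eqref{eq:inv_measure_Tell}, it holds on $\bigcup_\delta\dconst$. This union is uniformly dense in the continuous functions on $\OrbEc$: by Lemma~\ref{l:action_isometries} and the continuity of $\Piss$ (and~\eqref{eq:84}), every continuous function is uniformly approximated by functions locally constant at some scale $\delta$. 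Both sides are continuous in $f$ for uniform convergence, so the identity extends to continuous functions, and then to bounded Borel functions by the uniqueness of measures.

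Ergodicity is the main step, and I would derive it from the spectral gap. Let $f$ be bounded Borel with $P_\ell f=f$ $\muE$-almost everywhere. Then $f\in L^2(\muE)$, and $\bT_\ell$ acts on $L^2(\muE)$ as a contraction. I would show that for $g\in L^2(\muE)$ with $\int g\dd\muE=0$ one has $\bT_\ell^n g\to 0$ in $L^2$. For $g\in \zeromean$ this follows from Theorem~\ref{t:hecke-dynamics_sups}$(i)$, since $\|U_{E,\ell}^n g\|\le \|U_{E,\ell}\|_{\delta,E,0}^n\|g\|\to0$. The spaces $\zeromean$ are dense in the mean-zero part of $L^2(\muE)$, because locally constant functions are dense in $L^2$ of a compact metric space, and projecting off constants preserves this density. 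Since $\bT_\ell$ is a contraction, an $\varepsilon/2$ argument gives the convergence on the whole mean-zero part. Applying this to $g=f-\int f\dd\muE$, which satisfies $\bT_\ell^n g=g$, forces $g=0$ in $L^2$. Hence $f$ is constant $\muE$-almost everywhere.

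The delicate points are the two density claims. Continuous functions on $\OrbEc$ are approximated by elements of $\dconst$: given $f$, the function $f\circ\Piss$ is uniformly continuous on the compact set $\Piss^{-1}(\OrbEc)$, and since $\Piss$ is finite-to-one one can choose the approximants compatibly with $\Piss$. The other claim is the $L^2$ density. Both reduce to the ultrametric structure, in which $\delta$-balls partition the compact set and are clopen.
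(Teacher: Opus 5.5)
Your proposal is correct and follows the paper's route. The paper states the lemma as a direct consequence of~\eqref{eq:inv_measure_Tell} (equivalently~\eqref{eq:90} with $\ell\NE=\NE$) for invariance, and of the spectral gap in Theorem~\ref{t:hecke-dynamics_sups}$(i)$ for ergodicity; your argument makes explicit the density of $\bigcup_{\delta>0}\zeromean$ in the mean-zero part of $L^2(\muE)$ that this deduction leaves implicit.

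One minor imprecision is in the measurability step. For $\ell$ not squarefree, $T_\ell(E')$ also contains quotients by non-cyclic subgroups, so in the $j$-coordinate it is the zero divisor of $\prod_{d^2\mid\ell}\Phi_{\ell/d^2}(j(E'),Y)$ rather than of the modular polynomial $\Phi_\ell(j(E'),Y)$ alone. This leaves your continuity-of-roots argument intact.
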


For the following lemma, recall that ${S_n(f) \: \Ell(\Cp)^n \to \R}$ and $\hS_n(f) \: \Ell(\Cp)^{\Nz} \to \R$ are defined in~\eqref{eq:S_n(f)} and~\eqref{eq:hatS_n(f)}, respectively.

\begin{lemma}
  \label{l:Markov_vs_O^n_T_ell}
  Let~$\ell$ and~$n$ be in~$\N$ and~${f \: \Ell(\Cp) \to \R}$ Borel measurable.
  Then, for every Borel probability measure~$\nu$ on $\Ell(\Cp)$ and every interval~$I$ of~$\R$,
  \begin{equation}
    \label{eq:110}
    \left(\frac{1}{\sqrt{n}} \hS_n(f) \right)_*(\P_{\nu})(I)
    =
    \int \odelta_{\fO_{T_\ell}^{n - 1}(E)} \left( \left\{ \uE \in \Ell(\Cp)^n \: \frac{1}{\sqrt{n}} S_n(f)(\uE) \in I \right\} \right) \dd \nu(E).
  \end{equation}
\end{lemma}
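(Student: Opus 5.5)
The plan is to reduce both sides of the identity to integrals over $\nu$ of the same quantity for the point mass at each starting point. Both sides are affine in $\nu$. By the construction of $\P_\nu$ in~\eqref{eq:int_F_dP_nu}, $\P_\nu = \int \P_E \dd \nu(E)$ as measures on $\Ell(\Cp)^{\Nz}$. This holds because both sides agree on cylinder sets, and cylinder sets form a $\pi$-system generating $\cB^{\otimes \Nz}$. Since $\hS_n(f)$ depends only on the first $n$ coordinates, it therefore suffices to prove, for every $E$ in $\Ell(\Cp)$,
\begin{equation*}
  (X_0,\ldots,X_{n-1})_*(\P_E)
  =
  \odelta_{\fO_{T_\ell}^{n-1}(E)}
\end{equation*}
as Borel probability measures on $\Ell(\Cp)^n$. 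Applying this to the set $\{\uE \: \frac{1}{\sqrt n}S_n(f)(\uE)\in I\}$, which is Borel because $f$ is Borel, and using $\hS_n(f) = S_n(f)\circ(X_0,\ldots,X_{n-1})$, then gives~\eqref{eq:110}.

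The displayed identity I would prove by induction on $n$. For $n=1$ both sides equal $\delta_E$. For the inductive step, formula~\eqref{eq:int_F_dP_nu} gives, for a product set $A_0\times\cdots\times A_n$,
\begin{equation*}
  \P_E(X_0\in A_0,\ldots,X_n\in A_n)
  =
  \int_{A_0\times\cdots\times A_{n-1}} P_\ell(x_{n-1},A_n) \dd (X_0,\ldots,X_{n-1})_*\P_E .
\end{equation*}
By the inductive hypothesis this equals $\int_{A_0\times\cdots\times A_{n-1}} \odelta_{T_\ell(x_{n-1})}(A_n)\dd\odelta_{\fO^{n-1}_{T_\ell}(E)}$. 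On the combinatorial side, a tuple $(C_0,\ldots,C_n)$ in the index set extends a tuple $(C_0,\ldots,C_{n-1})$ by a choice of a subgroup $C_n$ of order $\ell$ of $E_n$. Hence
\begin{equation*}
  \fO^{n}_{T_\ell}(E)
  =
  \sum_{\uC}\ \sum_{C\le E_n(\uC),\ \#C=\ell} (\uE(\uC), E_n(\uC)/C),
\end{equation*}
so its last coordinate fibre over each point of $\fO^{n-1}_{T_\ell}(E)$ is exactly $T_\ell(E_n)$, counted with multiplicity. Dividing by $\deg \fO^n_{T_\ell}(E) = \sigma(\ell)^{n+1}$ and using $\deg T_\ell = \sigma(\ell)$ from~\eqref{eq:deg_of_T_ell} yields the same product-set formula. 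Since product sets determine the measure, the induction closes.

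The main obstacle is bookkeeping rather than substance. One must match the indexing convention in the definition of $I_{n,\ell}(E)$, whose degree is $\sigma(\ell)^n$, with $n+1$ coordinates, as in~\eqref{eq:O_T_ell(E)^n_def}, and check that multiplicities coming from distinct subgroups with isomorphic quotients are correctly reproduced by $\odelta_{T_\ell(E_n)}$. This is immediate from the definition~\eqref{eq:2} of $T_\ell$, which also counts with multiplicity. Measurability of $x\mapsto P_\ell(x,A)$ is not an issue here, since all the measures involved are finite atomic.
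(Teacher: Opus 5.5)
Your argument is correct and is essentially the paper's. The paper applies \eqref{eq:int_F_dP_nu} directly to $\P_\nu$ to write the left side of \eqref{eq:110} as $\int G_n \dd\nu$, where $G_n(E)$ is the iterated integral over $n-1$ transitions. It then unrolls $G_n(E)$ into the finite sum over chains of subgroups defining $\fO^{n-1}_{T_\ell}(E)$. Your disintegration $\P_\nu=\int \P_E \dd\nu(E)$ plus induction on product sets does the same thing, and in fact yields the slightly stronger identity of measures on $\Ell(\Cp)^n$.

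There are two small fixes to make. In the inductive step, the appended fibre is $T_\ell$ of the last coordinate $E_{n-1}$, and $\deg \fO^{n}_{T_\ell}(E)=\sigma(\ell)^{n}$, not $\sigma(\ell)^{n+1}$. Also, measurability of $E\mapsto P_\ell(E,A)$, which you need for $\P_\nu=\int\P_E\dd\nu(E)$, comes from $P_\ell$ being a transition probability, as the paper's setup assumes. It does not follow from each $P_{\ell,E}$ being finitely atomic.
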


\begin{proof}
  Denote by ${\bfone_I \: \R \to \{0,1\}}$ the characteristic function of~$I$ and by ${G_n \: \Ell(\Cp) \to \R}$ the function defined by ${G_1 \= \bfone_I \circ f}$ and, for ${n \ge 2}$, by
  \begin{equation}
    \label{eq:111}
    G_n(E_0)
    \=
    \int \cdots \int \bfone_I\circ \left(\frac{1}{\sqrt{n}} \hS_n(f) \right) \dd P_{\ell, E_{n-2}}(E_{n-1}) \cdots \dd P_{\ell, E_0}(E_1).
  \end{equation}

  Since for every~$\uE$ in~$\Ell(\Cp)^{\Nz}$ the number $\bfone_I\circ \left(\frac{1}{\sqrt{n}} \hS_n(f) \right)(\uE)$ depends only on the first~$n$ coordinates of~$\uE$, \eqref{eq:int_F_dP_nu} implies
  \begin{equation}
    \label{eq:112}
    \left(\frac{1}{\sqrt{n}} \hS_n(f) \right)_*(\P_{\nu})(I)
    =
    \int \bfone_I \circ \left(\frac{1}{\sqrt{n}} \hS_n(f) \right) \dd \P_\nu
    =
    \int G_n \dd \nu.
  \end{equation}
  It is then enough to prove that, for every~$E$ in~$\Ell(\Cp)$,
  \begin{equation}
    \label{eq:equality_G_and_push_forward}
    G_n(E)
    =
    \odelta_{\fO_{T_\ell}^{n - 1}(E)} \left( \left\{ \uE \in \Ell(\Cp)^n \: \frac{1}{\sqrt{n}} S_n(f)(\uE) \in I \right\} \right).
  \end{equation}
  If ${n=1}$, then this follows from the definition of~$G_1$ and
  \begin{equation*}
    \odelta_{\fO_{T_\ell}^0(E)}(\{ E' \in \Ell(\Cp) \: S_1(f)(E') \in I \})
    =
    \bfone_I(f(E)).
  \end{equation*}
  Suppose ${n\ge 2}$.
  Combining~\eqref{eq:O_T_ell(E)^n_def} and~\eqref{eq:P_ellf}, yields
  \begin{multline}
    \label{eq:G_n(E)}
    G_n(E)
    =
    \frac{1}{\sigma(\ell)^n} \sum_{\uC \in I_{n,\ell}(E)} \bfone_I\circ \left(\frac{1}{\sqrt{n}}S_n(f) \right)(\uE(\uC))
    =
    \int \bfone_I \circ \frac{1}{\sqrt{n}}S_n(f) \dd \odelta_{\fO_{T_\ell}^{n - 1}(E)}
    \\ =
    \odelta_{\fO_{T_\ell}^{n - 1}(E)} \left( \left\{ \uE \in \Ell(\Cp)^n \: \frac{1}{\sqrt{n}} S_n(f)(\uE) \in I \right\} \right).
    \qedhere
  \end{multline}
\end{proof}

\begin{lemma}
  \label{l:G_n_in_dconst}
  Let~$E$ be in~$\Sups$, $\ell$ in ${\NE \cap \N}$, ${\delta}$ in~$\Rp$, and~$f$ in~$\dconst$.
  Then, for every~$n$ in~$\N$ and every interval~$I$ of~$\R$, the function ${\Ell(\Cp) \to \R}$ supported on~$\OrbEc$ and defined on this set by
  \begin{equation}
    \label{eq:113}
    E' \mapsto \odelta_{\fO_{T_\ell}^{n - 1}(E')} \left( \left\{ \uE \in \Ell(\Cp)^n \: \frac{1}{\sqrt{n}} S_n(f)(\uE) \in I \right\} \right),
  \end{equation}
  belongs to~$\dconst$.
\end{lemma}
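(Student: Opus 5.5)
We must show the function $G_n(E') \= \odelta_{\fO_{T_\ell}^{n-1}(E')}(\{\uE : \frac{1}{\sqrt n}S_n(f)(\uE)\in I\})$, restricted to $\OrbEc$, lies in $\dconst$. By the definition of $\dconst$, we must show that, for every $\ss$ in $\tSups$, the function $G_n\circ\Piss$ is constant on $B\cap \Piss^{-1}(\OrbEc)$ for every ball $B$ of radius $\delta$ in $\hDss$. The plan is to lift the whole tree of $n$-step Hecke paths to the deformation spaces via Lemma~\ref{l:formal_Hecke_formula}. There, each step is realized by the isometries $x\mapsto\hphi\cdot x$ of Lemma~\ref{l:action_isometries}, and $f$ is constant on $\delta$-balls.

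\emph{Step 1: path-counting formula.} Iterate Lemma~\ref{l:formal_Hecke_formula}. Fix $x\in\hDss$ with $\Piss(x)=E'\in\OrbEc$. By induction on $n$, write $\fO_{T_\ell}^{n-1}(\Piss(x))$ as a weighted sum over chains $\ss=\ss_0,\ss_1,\dots,\ss_{n-1}$ in $\tSups$ and isogenies $\phi_i\in\Hom_\ell(\ss_{i-1},\ss_i)$. Each chain contributes the point
\[
\bigl(\Pi_{\ss_0}(x),\Pi_{\ss_1}(\hphi_1\cdot x),\dots,\Pi_{\ss_{n-1}}(\hphi_{n-1}\cdots\hphi_1\cdot x)\bigr)
\]
with weight $\prod_i \#\Aut(\ss_i)^{-1}$. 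These weights do not depend on $x$; they depend only on the chain. Since $p\nmid\ell$, Lemma~\ref{l:formal_Hecke_formula} applies at each step. Then
\[
G_n(\Piss(x))=\sigma(\ell)^{-(n-1)}\sum_{\text{chains}} w\cdot \bfone_I\Bigl(\tfrac{1}{\sqrt n}\sum_{i=0}^{n-1} f\bigl(\Pi_{\ss_i}(\hphi_i\cdots\hphi_1\cdot x)\bigr)\Bigr).
\]
One point needs care: Lemma~\ref{l:formal_Hecke_formula} is stated as an identity of divisors of $\Dsspr$, while the cycle $\fO$ records paths. I would check that the multiplicity bookkeeping with $\#\Aut$ carries over to the path level. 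If it does not, I would replace it by pushing forward a sum over the lifted paths, which is how (2.14) of \cite{HerMenRivII} is derived. I expect this to be the main obstacle, since it is about combinatorial correctness rather than analysis.

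\emph{Step 2: local constancy.} Let $x,x'\in B\cap\Piss^{-1}(\OrbEc)$, where $B$ is a $\delta$-ball. Each composite $\hphi_i\cdots\hphi_1$ lies in $\bfG_{\ss,\ss_i}$ because $\deg$ is a unit, so by Lemma~\ref{l:action_isometries} it is an isometry. Hence $\hphi_i\cdots\hphi_1\cdot x$ and $\hphi_i\cdots\hphi_1\cdot x'$ lie in a common $\delta$-ball of $\hD_{\ss_i}$. Their images under $\Pi_{\ss_i}$ are the $i$-th coordinates of paths starting at points of $\OrbEc$. Since $\ell\in\NE$, \eqref{eq:89} gives $\supp(T_\ell(E''))\subseteq\OrbEc$ for $E''\in\OrbEc$, so both points lie in $\Pi_{\ss_i}^{-1}(\OrbEc)$. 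Because $f\in\dconst$, the values of $f$ at the two images coincide. So each chain's term is identical for $x$ and $x'$, and $G_n\circ\Piss$ is constant on $B\cap\Piss^{-1}(\OrbEc)$.

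Finally, the function is supported on $\OrbEc$ by definition, so it belongs to $\dconst$. The case $n=1$ is immediate, since there $G_1=\bfone_I\circ(f)$.
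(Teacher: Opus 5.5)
Your argument is correct, but it is organized differently from the paper's. The paper never lifts whole paths and argues by induction on $n$. Write $G_{n,I}$ for the function in the statement, fix a $\delta$-ball $B$ of $\hDss$, let $c$ be the common value of $f$ on $\Piss(B)\cap\OrbEc$, and put $J = (\sqrt{n+1}\,I - c)/\sqrt{n}$. Splitting off the first subgroup $C \le E'$ in the definition of $\fO_{T_\ell}^{n}(E')$ gives $G_{n+1,I} = \bT_\ell(G_{n,J})$ on $\Piss(B)\cap\OrbEc$. The one-step invariance $\bT_\ell(\dconst)\subseteq\dconst$ of Lemma~\ref{l:Tell_on_dconst}, which applies because $\ell\in\NE$ gives $\ell\NE=\NE$, then closes the induction.

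You instead unfold the full $(n-1)$-step path sum on the deformation spaces. You then observe that the lifted paths issuing from two points of a common $\delta$-ball move in lockstep under the isometries $x\mapsto \hphi_i\cdots\hphi_1\cdot x$.

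The step you flag as the main obstacle is not a real one. Since $\fO_{T_\ell}^{n}(E')=\sum_{C\le E'}(E',\fO_{T_\ell}^{n-1}(E'/C))$ depends on each $E'/C$ only through its class, the path-level formula follows by induction from the one-step divisor identity of Lemma~\ref{l:formal_Hecke_formula}. That identity holds for \emph{every} lift, so at the next step you may use the lift $\hphi_1\cdot x$ of $\Pi_{\ss_1}(\hphi_1\cdot x)$, and the weights $\prod_i \#\Aut(\ss_i)^{-1}$ come out exactly as you wrote.

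As for what each route buys: the paper's proof is shorter and reuses Lemma~\ref{l:Tell_on_dconst} as a black box, but its interval-shifting trick is tailored to sums $S_n(f)$. Your route is more explicit and proves more. For $E'$ in a common $\delta$-ball, the atoms of the path measures $\odelta_{\fO_{T_\ell}^{n-1}(E')}$ correspond bijectively, with equal weights, through coordinatewise $\delta$-close lifts. Hence any function of $(f(E_0),\ldots,f(E_{n-1}))$ produces an element of $\dconst$, not only $\bfone_I$ of the normalized sum. This also makes transparent why the convergence in Theorem~\ref{t:TLC} is uniform in the starting point.
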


\begin{proof}
  For every~$n$ in~$\N$ and every interval~$I$ of~$\R$, denote by ${G_{n, I} \: \Ell(\Cp) \to \R}$ the function supported on~$\OrbEc$ and defined on this set by~\eqref{eq:113}.
  For ${n = 1}$, every interval~$I$ of~$\R$, and every~$E'$ in~$\OrbEc$,
  \begin{equation}
    \label{eq:114}
    G_{1, I}(E')
    =
    \odelta_{\fO_{T_\ell}^0(E')} \left( \left\{ E'' \in \Ell(\Cp) \: f(E'') \in I \right\} \right)
    =
    \bfone_I(f(E'')),
  \end{equation}
  so~$G_{1, I}$ belongs to~$\dconst$.
  Let~$n$ in~$\N$ be such that, for every interval~$I$ of~$\R$, the function~$G_{n, I}$ belongs to~$\dconst$.
  Let~$\ss$ be in~$\tSups$, $B$ a ball of radius~$\delta$ of~$\hDss$, and~$c$ the common value of~$f$ on ${\Piss(B) \cap \OrbEc}$.
  Let~$I$ be an interval of~$\R$ and put ${J \= \frac{\sqrt{n + 1} I - c}{\sqrt{n}}}$.
  Then, for every~$E'$ in ${\Piss(B) \cap \OrbEc}$,
  \begin{equation}
    \label{eq:115}
    \begin{split}
      G_{n + 1, I}(E')
      & =
        \int \bfone_I \circ \frac{1}{\sqrt{n + 1}}S_{n + 1}(f) \dd \odelta_{\fO_{T_\ell}^{n}(E')}
      \\ & =
           \frac{1}{\sigma(\ell)^{n + 1}} \sum_{\uC \in I_{n + 1,\ell}(E')} \bfone_I\circ \left(\frac{1}{\sqrt{n + 1}}S_{n + 1}(f) \right)(\uE(\uC))
      \\ & =
           \frac{1}{\sigma(\ell)^{n + 1}} \sum_{\substack{\uC \in I_{n + 1,\ell}(E') \\ (E_0, \ldots E_{n + 1}) \= \uE(\uC)}} \bfone_J \circ \left(\frac{1}{\sqrt{n}} S_n(f) \right)(E_1, \ldots, E_{n + 1})
      \\ & =
           \frac{1}{\sigma(\ell)^{n + 1}} \sum_{C \le E' \text{ of order~$\ell$}} \sum_{\uC \in I_{n,\ell}(E'/C)} \bfone_J \circ \left(\frac{1}{\sqrt{n}} S_n(f) \right)(\uE(\uC))
      \\ & =
           \frac{1}{\sigma(\ell)} \sum_{C \le E' \text{ of order~$\ell$}} \int \bfone_J \circ \frac{1}{\sqrt{n}}S_n(f) \dd \odelta_{\fO_{T_\ell}^{n - 1}(E'/C)}
      \\ & =
           \bT_\ell(G_{n, J})(E').
    \end{split}
  \end{equation}
  Since by hypothesis~$G_{n, J}$ belongs to~$\dconst$, the function~$\bT_\ell(G_{n, J})$ also belongs to~$\dconst$ by Lemma~\ref{l:Tell_on_dconst} and this proves that~$G_{n + 1, I}$ is constant on ${\Piss(B) \cap \OrbEc}$.
  Since this holds for every ball of radius~$\delta$ of~$\hDss$, it follows that~$G_{n + 1, I}$ belongs to~$\dconst$.
  The lemma follows by induction on~$n$.
\end{proof}

\subsection{Proof of Theorem~\ref{t:TLC}}
\label{sec:proof_TLC}

Fix~$E$ in~$\Sups$, an integer~$\ell$ in~$\NE$ satisfying ${\ell \ge 2}$, and a function ${f \: \Ell(\Cp) \to \R}$ with ${\int f \dd \muE = 0}$ whose restriction to~$\OrbEc$ is a locally constant.
By Lemma~\ref{l:invariance_and_ergodicity}, the measure~$\muE$ is $P_\ell$\nobreakdash-invariant and $P_\ell$\nobreakdash-ergodic.
Modifying~$f$ outside~$\OrbEc$ if necessary, suppose~$f$ is supported on this set.
In particular, ${f}$ is Borel measurable.
Since~$\OrbEc$ is compact, there exists~$\delta$ in~$\Rp$ such that~$f$ is constant on every ball of~$\OrbEc$ of radius~$\delta$.
Then, $f$ belongs to~$\zeromean$ by~\eqref{eq:84}.
Moreover, the correspondence~$\bT_\ell$ acts on~$\zeromean$ as an operator~$U_{E, \ell}$ satisfying ${\|U_{E, \ell} \|_{\delta,E,0} < 1}$ by Theorem~\ref{t:hecke-dynamics_sups}$(i)$.
It follows that the series~$\sum_{k=0}^\infty \bT_\ell^k f$ converges in~$\zeromean$ and hence in~$L^2(\Ell(\Cp), \muE)$.
This proves that the hypotheses of Theorem~\ref{t:CLT_Markov} are satisfied with ${X = \Ell(\Cp)}$, ${\mu = \muE}$, and ${P = P_\ell}$.
In particular, the series~\eqref{eq:sigma_f_Markov}, which equals~\eqref{eq:sigma_f_p-adic} by~\eqref{eq:P_ellf}, converges to a nonnegative real number.
Denote its square root by~$\sigma_f$, as in the statement of the theorem.
If ${\sigma_f > 0}$, then Theorem~\ref{t:CLT_Markov} implies that, for every interval~$I$ of~$\R$ and $\muE$-almost every point~$E'$ in~$\OrbEc$,
\begin{equation}
  \label{eq:116}
  \left(\frac{1}{\sqrt{n}} \hS_n(f) \right)_*(\P_{E'})(I) \to \cN_{0,\sigma^2}(I) \text{ as } n \to \infty.
\end{equation}
Combined with Lemma~\ref{l:Markov_vs_O^n_T_ell} with $\nu=\delta_{E'}$, this implies
\begin{equation}
  \label{eq:117}
  \odelta_{\fO_{T_\ell}^{n - 1}(E')} \left( \left\{ \uE \in \Ell(\Cp)^n \: \frac{1}{\sqrt{n}} S_n(f)(\uE) \in I \right\} \right) \to \cN_{0,\sigma^2}(I) \text{ as } n \to \infty.
\end{equation}
Since ${\supp(\muE) = \OrbEc}$, Lemma~\ref{l:G_n_in_dconst} implies that this convergence holds for each point~$E'$ of~$\OrbEc$ and is uniform on this set.

The proof of Theorem~\ref{t:TLC} when ${\sigma_f = 0}$ is similar to the previous case and is omitted.

\begin{remark}
  \label{rmk:case_ell_not_in_NE}
  Let~$E$ be in~$\Sups$ and~$\ell$ an integer outside~$\NE$ satisfying ${\ell \ge 2}$ and ${p \nmid \ell}$.
  Then, central limit theorems analogous to Theorem~\ref{t:TLC} and Corollary~\ref{c:TLC} hold for each of the sequences~$(T_\ell^{2n}(E))_{n=0}^\infty$ and~$(T_\ell^{2n+1}(E))_{n=0}^\infty$.
  To state these results precisely, put
  \begin{equation}
    \label{eq:118}
    \coset(0)
    \=
    \NE
    \text{ and }
    \coset(1)
    \=
    \ell \NE,
  \end{equation}
  and define, for each~$n$ in~$\Nz$ and~$E'$ in~$\Ell(\Cp)$, the $0$-dimensional cycles in~$Z_0(\Ell(\Cp)^{n+1})$,
  \begin{equation*}
    \fO_{T_\ell}^{n, 1}(E')
    \=
    \sum_{\substack{\uC \in I_{2n + 1,\ell}(E') \\ (E'_0, \ldots, E'_{2n + 1}) \= \uE(\uC)}} (E'_1, E'_3, \ldots, E'_{2n + 1}),
  \end{equation*}
  ${\fO_{T_\ell}^{0, 0}(E') \= E'}$, and, if ${n \ge 1}$,
  \begin{equation*}
    \fO_{T_\ell}^{n, 0}(E')
    \=
    \sum_{\substack{\uC \in I_{2n,\ell}(E') \\ (E'_0, \ldots, E'_{2n}) \= \uE(\uC)}} (E'_0, E'_2, \ldots, E'_{2n}).
  \end{equation*}
  Then, for all~$i$ in~$\{0, 1\}$ and~${f \: \Ell(\Cp) \to \R}$,
  \begin{equation}
    \label{eq:119}
    \int S_n(f) \dd \odelta_{\fO_{T_\ell}^{n - 1, i}(E')}
    =
    \sum_{k=0}^{n - 1} \bT_\ell^{2k+i}f(E').
  \end{equation}
  A straightforward adaptation of Theorem~\ref{t:TLC} yields the following result: For all~$i$ in~$\{0, 1\}$, $E'$ in~$\OrbEc$, and function ${f \: \Ell(\Cp) \to \R}$ with ${\int f \dd \mu_{\coset(i)}^E = 0}$ whose restriction to~$\overline{\Orb_{\coset(i)}(E)}$ is locally constant,
  \begin{equation}
    \label{eq:120}
    \int f^2 \dd \mu^E_{\coset(i)} + 2 \sum_{k=1}^\infty \int f \cdot \bT_\ell^{2k} f \dd \mu^E_{\coset(i)}
  \end{equation}
  converges to a nonnegative real number.
  Denote its square root by~$\sigma_{f, i}$.
  If ${\sigma_{f, i} > 0}$, then
  \begin{equation}
    \label{eq:121}
    \odelta_{\fO_{T_\ell}^{n - 1, i}(E')} \left( \left\{ \uE \in \Ell(\Cp)^n \: \frac{1}{\sqrt{n}}S_n(f)(\uE) \in I \right\} \right) \to \cN_{0,\sigma_{f, i}^2}(I) \text{ as } n\to \infty,
  \end{equation}
  and the convergence is uniform on~$E'$ in~$\overline{\Orb_{\coset(i)}(E)}$ for fixed~$I$.
  If ${\sigma_{f, i} = 0}$, then the same properties hold with~$\cN_{0,\sigma_{f, i}^2}$ replaced by~$\delta_0$.
  The corresponding analogues of Corollary~\ref{c:TLC} also hold.
\end{remark}

\section{Random walks and stationary measures}
\label{s:random_walks}

This section proves Theorem~\ref{t:random_walks} in Section~\ref{ss:random_walks}, by combining the work of Benoist and Quint in~\cite{BQ14} with results on partial Hecke orbits and their homogeneous measures from~\cite{HerMenRivII}.

Throughout this section, fix a prime number~$p$, an integer~$\ell$ satisfying ${\ell \ge 2}$ that is not divisible by~$p$, and a supersingular elliptic curve~$\ss$ over~$\Fpalg$.
Recall from Section~\ref{ss:random_walks} that ${\langle \ell \rangle = \{ \ell^n \: n \in \N\}}$, that~$\ellc$ denotes closure of~$\langle \ell \rangle$ in~$\Zp$, and that~$\Hss(\ell)$ is the closed subgroup of~$\Gss$ given by ${\Hss(\ell) = \{ g \in \Gss \: \nr(g) \in \ellc\}}$.
Furthermore, there is an embedding ${\End(\Fss) \to M_2(\Z_{p^2})}$ that restricts to a group morphism ${\Gss \to \GL_2(\Z_{p^2})}$, hence inducing a linear action of~$\Hss(\ell)$ on~$\Cp^2$.
This linear action induces the action of~$\Hss(\ell)$ on~$\P^1(\Cp)$ by M{\"o}bius transformations.

The proof of Theorem~\ref{t:random_walks} relies on the following consequence of the results of Benoist and Quint in~\cite{BQ14}.

\begin{proposition}
  \label{p:freely-equivariant}
  Let~$\mu$ be a Borel probability measure on~$\Hss(\ell)$ whose support generates~$\Hss(\ell)$ as a closed subsemigroup of~$\Gss$.
  Then, for every~$z$ in~$\P^1(\Cp)$, the following properties hold.
  \begin{enumerate}
  \item
    There exists a unique Borel probability measure~$\xi_z$ supported on ${\Hss(\ell) \cdot z}$ that is $\mu$\nobreakdash-stationary, and this measure is $\mu$-ergodic.
  \item
    For every~$z'$ in~$\Hss(\ell) \cdot z$ and $\mu^{\otimes \N}$-almost every sequence~$(b_n)_{n=1}^\infty$ of elements of~$\Hss(\ell)$, the following weak convergence of measures hold,
    \begin{equation}
      \label{eq:122}
      \frac{1}{n} \sum_{k = 1}^n \mu^{\ast k} \ast \delta_{z'} \to \xi_z
      \text{ and }
      \frac{1}{n} \sum_{k=1}^n\delta_{b_k \cdots b_1 z'} \to \xi_z
      \text{ as }
      n \to \infty.
    \end{equation}
  \end{enumerate}
\end{proposition}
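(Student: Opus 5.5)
We will deduce Proposition~\ref{p:freely-equivariant} from the results of Benoist and Quint in~\cite{BQ14} on stationary measures and random walks on homogeneous spaces of compact groups. The key structural observation is that~$\Hss(\ell)$ is a compact group: it is a closed subgroup of~$\Gss$, which is compact, being the unit group of the maximal order~$\Rss$ of the division quaternion algebra~$\Bss$. Hence the orbit~$\Hss(\ell)\cdot z$ is a compact subset of~$\P^1(\Cp)$. Moreover, writing~$H_z$ for the stabilizer of~$z$ in~$\Hss(\ell)$, the orbit map induces a homeomorphism ${\Hss(\ell)/H_z \to \Hss(\ell)\cdot z}$, since it is a continuous bijection from a compact space onto a Hausdorff space. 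So the problem reduces to random walks on the compact homogeneous space ${X \= \Hss(\ell)/H_z}$ driven by~$\mu$.

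\textbf{Step 1 (existence and uniqueness).} Let~$m_X$ be the push-forward to~$X$ of the normalized Haar measure of~$\Hss(\ell)$. It is $\Hss(\ell)$-invariant and hence $\mu$-stationary. For uniqueness, we argue as follows. Let~$\nu$ be a $\mu$-stationary probability measure on~$X$. The closed subsemigroup generated by~$\supp(\mu)$ is all of~$\Hss(\ell)$; in a compact group, any closed subsemigroup is a group. The stationarity of~$\nu$ then forces $g_*\nu=\nu$ for every~$g$ in this group, by the standard compact-group argument as in~\cite{BQ14}: consider the $L^2$ norm of~$\nu * \phi$, or equivalently use the martingale convergence for the limit measures $b_1\cdots b_n\nu$, which must be constant since the ambient group is compact. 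Thus~$\nu$ is $\Hss(\ell)$-invariant on the homogeneous space~$X$, so ${\nu = m_X}$. Put ${\xi_z \= m_X}$, transported to~$\Hss(\ell)\cdot z$. Ergodicity follows from uniqueness, because an extremal decomposition of~$\xi_z$ would produce further stationary measures supported on the orbit.

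\textbf{Step 2 (convergence).} Since~$X$ is compact, every weak limit of $\frac1n\sum_{k=1}^n \mu^{*k}*\delta_{z'}$ is a $\mu$-stationary probability measure on~$X$; this is a Ces\`aro telescoping argument. By Step~1 this limit equals~$\xi_z$, which gives the first convergence. For the almost sure empirical measures we use Breiman's law of large numbers: for $\mu^{\otimes\N}$-almost every sequence, every weak limit of $\frac1n\sum_{k=1}^n\delta_{b_k\cdots b_1 z'}$ is $\mu$-stationary. This uses the martingale $f(b_{k+1}\cdots b_1z')-(\mu*f)(b_k\cdots b_1 z')$ for~$f$ in a countable dense subset of~$C(X)$, with compactness of~$X$ ensuring tightness. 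Uniqueness from Step~1 then concludes.

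The main obstacle is the uniqueness in Step~1, namely passing from ``the support of~$\mu$ generates~$\Hss(\ell)$ as a closed semigroup'' to ``every $\mu$-stationary measure on~$X$ is $\Hss(\ell)$-invariant.'' Here we cite the compact-group case treated in~\cite{BQ14}, which is essentially the Choquet--Deny / Kawada--It\^o type argument for compact groups. A secondary point needing care is the identification of~$\Hss(\ell)\cdot z$ with~$\Hss(\ell)/H_z$ as topological spaces, which is justified above.
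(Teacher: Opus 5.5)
Your proposal is correct, and it follows a genuinely different and more elementary route than the paper. The paper does not use the compactness of $\Hss(\ell)$ to obtain uniqueness. Instead it proceeds in three steps:
\begin{enumerate}
\item It transfers the orbit into $\P^1(\cK_0)$, for a finite extension $\cK_0$ of $\Q_{p^2}$, using the equivariant homeomorphism of Lemma~\ref{l:freely-equivariant}; this is needed because \cite{BQ14} works over a local field.
\item It proves that the linear action of $\Hss(\ell)$ on $\Cp^2$ is strongly irreducible (Lemma~\ref{l:strongly_irred_action}). That lemma rests on Corollary~\ref{c:Equidistribucion} and on the nonatomicity result, Theorem~\ref{t:no_atoms_hecke}.
\item It invokes the Benoist--Quint bijection between $\mu$-ergodic measures and $\Hss(\ell)$-minimal sets, together with their existence results for the Ces\`aro and empirical limits.
\end{enumerate}
You use only that $\Hss(\ell)$ is a compact group acting continuously on $\P^1(\Cp)$. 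Stationary measures on an orbit are then invariant, hence equal to the push-forward of Haar measure. Both convergences follow from the Ces\`aro telescoping argument and from Breiman's law of large numbers, applied on the compact metrizable orbit with the Feller operator $f \mapsto \int f(g\,\cdot) \dd \mu(g)$.

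What your route buys:
\begin{itemize}
\item It is self-contained and treats every $z$ uniformly, with no reduction to algebraic points.
\item It needs neither strong irreducibility nor any Hecke-orbit input.
\item It identifies $\xi_z$ explicitly as the Haar push-forward. This makes its independence of $\mu$, and the equality $\nu_x = \xi_{\Phiss(x)}$ in Theorem~\ref{t:random_walks}, immediate.
\end{itemize}
What the paper's route buys in exchange is the strong irreducibility statement itself. It extends \cite[Corollary~5.8.1]{GorKas} and is of independent interest.

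The one step to tighten is ``stationary implies invariant''. Your martingale sketch there is vague, and rather than leaning on \cite{BQ14} you can argue directly:
\begin{itemize}
\item Let $\pi$ be an irreducible unitary representation of $\Hss(\ell)$. By strict convexity, a vector fixed by the contraction $\int \pi(g) \dd \mu(g)$ is fixed by $\pi(\supp \mu)$. It is therefore fixed by $\pi$ of the closed subsemigroup generated by $\supp \mu$, which is $\Hss(\ell)$.
\item By the mean ergodic theorem and Peter--Weyl, the averages $\frac{1}{n} \sum_{k=1}^n \mu^{\ast k}$ then converge weakly to the Haar measure $m$ of $\Hss(\ell)$.
\item Hence a $\mu$-stationary $\nu$ satisfies $\nu = \frac{1}{n} \sum_{k=1}^n \mu^{\ast k} \ast \nu \to m \ast \nu$, and $m \ast \nu$ is $\Hss(\ell)$-invariant.
\end{itemize}
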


The proof of Proposition~\ref{p:freely-equivariant} relies on a couple of lemmas.
The following one extends~\cite[Corollary~5.8.1]{GorKas} and verifies one of the hypothesis required to apply the work of Benoist and Quint~\cite{BQ14}.
Namely, that the linear action of~$\Hss(\ell)$ on~$\Cp^2$ is \emph{strongly irreducible}: The only $\Hss(\ell)$-invariant finite union of vector subspaces of~$\Cp^2$ are~$\{0\}$ and~$\Cp^2$.
Strong irreducibility implies \emph{semisimplicity}: Every $\Hss(\ell)$-invariant vector subspace of~$\Cp^2$ admits a $\Hss(\ell)$-invariant complementary subspace.

\begin{lemma}
  \label{l:strongly_irred_action}
  Every finite $\Hss(\ell)$-invariant subset of~$\P^1(\Cp)$ is empty.
  In particular, the linear action of~$\Hss(\ell)$ on~$\Cp^2$ is strongly irreducible and hence semisimple.
\end{lemma}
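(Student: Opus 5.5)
Suppose, to reach a contradiction, that $Z \subseteq \P^1(\Cp)$ is a nonempty finite $\Hss(\ell)$-invariant set. The plan is to show that $\Hss(\ell)$ is too large to preserve such a set: a finite-index subgroup of it would fix a point, and then a rank-$4$ $\Zp$-lattice would be forced into a commutative subalgebra.

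First, $\Hss(\ell)$ acts on $Z$ by permutations. The kernel $H_0$ of this action is therefore a closed subgroup of finite index in $\Hss(\ell)$, and $H_0$ fixes every point of $Z$. Fix $z \in Z$ and write $z = [v]$ with $v$ a nonzero vector in $\Cp^2$. Then every $g$ in $H_0$ satisfies $gv = \lambda_g v$ for some scalar $\lambda_g$, so $v$ is a common eigenvector of $H_0$.

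Next, consider the $\Qp$-linear span $A$ of $H_0$ inside $\Bss$; it is a $\Qp$-subalgebra of $\Bss$. We claim $A = \Bss$. Since $\ellc \subseteq \Zp^\times$ is an open subgroup (for $p$ odd it contains a subgroup of the form $1 + p^k\Zp$; for $p = 2$ argue similarly using $(\Zp^\times)^2$), the subgroup $\Hss(\ell)$ contains the open subgroup $\{ g \in \Gss \: \nr(g) \in 1 + p^k \Zp\}$. In particular $\Hss(\ell)$ is open in $\Gss$, and hence so is $H_0$. An open subgroup of $\Gss$ contains $1 + p^m \Rss$ for some $m$. Because $\Rss$ is a lattice in $\Bss$, this shows that the span of $H_0$ is all of $\Bss$.

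Every element of $A = \Bss$ then has $v$ as an eigenvector under the embedding $\Bss \hookrightarrow M_2(\Q_{p^2}) \subseteq M_2(\Cp)$, since each generating element of $H_0$ does. Consequently $\Bss \otimes_{\Qp} \Q_{p^2} \cong M_2(\Q_{p^2})$ would preserve the line $\Cp v$. This is impossible, because $M_2(\Cp)$ acts irreducibly on $\Cp^2$, which gives the desired contradiction. Equivalently, the elements of $\Bss$ that have $v$ as an eigenvector form a commutative subalgebra of dimension at most $2$, whereas $\dim_{\Qp} \Bss = 4$. Either way, $Z$ must be empty.

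For the "in particular" statement, suppose $W$ is a finite union of subspaces of $\Cp^2$, other than $\{0\}$ and $\Cp^2$, that is invariant under $\Hss(\ell)$. Its $1$-dimensional components give a finite $\Hss(\ell)$-invariant subset of $\P^1(\Cp)$, and this subset is nonempty because $W \neq \{0\}$ and $W \neq \Cp^2$. This contradicts the first part, so the action is strongly irreducible.

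Semisimplicity then follows at once. Strong irreducibility implies irreducibility, so the only invariant subspaces are $\{0\}$ and $\Cp^2$, and each of these trivially has an invariant complement.

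The main obstacle is the openness step, namely that $\ellc$ has finite index in $\Zp^\times$ and that $H_0$ therefore contains $1 + p^m\Rss$. This requires care with the case $p = 2$, and with the fact that $\ellc$ may be a proper open subgroup of $\Zp^\times$ rather than all of it.
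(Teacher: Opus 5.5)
Your proof is correct, but it takes a genuinely different route from the paper's.

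\textbf{The paper's route.} The paper argues dynamically. A nonempty finite invariant set $\cA$ pulls back under $\Phiss$ to a discrete $\Hss(\ell)$-invariant subset of $\hDss$. Its image $\cX$ under $\Piss$ is discrete in $\Dss$. By Lemma~\ref{l:formal_Hecke_formula}, for $E$ in $\cX$ every $T_{\ell^{2n}}(E)|_{\Dss}$ is supported in $\cX$. The equidistribution in Corollary~\ref{c:Equidistribucion} then forces $\muE|_{\Dss}$ to be purely atomic, which contradicts Theorem~\ref{t:no_atoms_hecke}.

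\textbf{Your route.} You stay inside the quaternion algebra. The pointwise stabilizer $H_0$ of $Z$ is open in $\Gss$, so it contains $1+p^m\Rss$ and spans $\Bss$ over $\Qp$. Hence all of $\Bss$ would stabilize the line $\Cp v$. Both of your endings are sound:
\begin{itemize}
\item The natural map $\Bss\otimes_{\Qp}\Q_{p^2}\to M_2(\Q_{p^2})$ is an isomorphism, since its source is central simple of dimension $4$.
\item Alternatively, the stabilizer of $\Cp v$ in the division algebra $\Bss$ embeds into $\Cp$ via the eigenvalue map, so it is a field of degree at most $2$ over $\Qp$.
\end{itemize}

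\textbf{Comparison.} Your argument is elementary and self-contained: it uses neither the period map, nor Hecke equidistribution, nor the nonatomicity theorem of \cite{HerMenRiv24}. It also proves more, namely that no finite-index closed subgroup of $\Hss(\ell)$ has a common eigenvector in $\Cp^2$. The paper's argument instead presents the lemma as a reflection of the nonatomicity of the limit measures, at the price of relying on those deep inputs.

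\textbf{Points to make explicit.}
\begin{itemize}
\item $H_0$ is closed, because point stabilizers are closed by continuity of the M{\"o}bius action. Being of finite index, it is therefore open.
\item $\ellc$ is open because $\ell$ is not a root of unity in $\Zp$. Indeed, $\ell^{p-1}$ (resp.\ $\ell^2$ if $p=2$) is an integer at least $2$, so its positive powers topologically generate an open subgroup of $1+p\Zp$ (resp.\ $1+4\Z_2$).
\end{itemize}
Openness of $\ellc$ is in fact dispensable. $\Hss(\ell)$ contains the norm-one group $S_1(\ss,\ss)$, and every finite-index closed subgroup of it already spans $\Bss$; for instance, Cayley transforms $(1+tx)(1-tx)^{-1}$ of small trace-zero $x$ suffice.
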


\begin{proof}
  Suppose there were a $\Hss(\ell)$-invariant finite subset~$\cA_0$ of~$\P^1(\Cp)$ that is nonempty.
  Fix~$a$ in~$\cA_0$ and put
  \begin{equation}
    \label{eq:123}
    \cA
    \=
    \Hss(\ell) \cdot a,
    \hcX
    \=
    \Phiss^{-1}(\cA),
    \text{ and }
    \cX
    \=
    \Piss(\hcX).
  \end{equation}
  The set~$\cA$ is contained in~$\cA_0$ and is hence finite.
  Since~$\Phiss$ is equivariant under the action of~$\Gss$, rigid analytic, and nonconstant, the subset~$\hcX$ of~$\Dss$ is $\Hss(\ell)$-invariant and discrete.
  Together with the fact~$\Piss$ is rigid analytic of finite degree, this implies that~$\cX$ is discrete.
  Choose~$E$ in~$\cX$.
  Combining Lemma~\ref{l:formal_Hecke_formula} with the fact that~$\cX$ is $\Hss(\ell)$-invariant, it follows that, for every~$n$ in~$\Nz$, the inclusion ${\supp(T_{\ell^{2n}}(E)|_{\Dss}) \subseteq \cX}$ holds.
  Combined with \eqref{eq:sin-tasa_bis} in Corollary~\ref{c:Equidistribucion}, this implies ${\supp(\muE|_{\Dss}) \subseteq \cX}$ and hence that~$\muE|_{\Dss}$ is either zero or purely atomic.
  By the definition~\eqref{eq:def_mu_coset} of~$\muE$, the restriction~$\muE|_{\Dss}$ is proportional to the probability measure~$\mu_{\NE}^{E, e}$ and hence nonzero and purely atomic.
  This contradicts the fact that~$\mu^E_{\NE}$ is nonatomic (Theorem~\ref{t:no_atoms_hecke}) and proves that every finite $\Hss(\ell)$-invariant subset of~$\P^1(\Cp)$ is empty.

  To prove the last assertion, suppose that the linear action of~$\Hss(\ell)$ on~$\Cp^2$ were not strongly irreducible.
  Then, there would be a finite collection~$\sL$ of $1$-dimensional $\Cp$-subspaces of~$\Cp^2$ such that~$\bigcup_{L \in \sL} L$ is $\Hss(\ell)$-invariant.
  For each~$L$ in~$\sL$, denote by~$x(L)$ the corresponding point of~$\P^1(\Cp)$.
  Then, the finite subset~$\{x(L) \: L \in \sL\}$ of~$\P^1(\Cp)$ would be $\Hss(\ell)$-invariant, which is absurd.
  This proves that the linear action of~$\Hss(\ell)$ on~$\Cp^2$ is strongly irreducible and hence semisimple.
\end{proof}

In the next lemma, identify~$\P^1(\Cp)$ with ${\Cp \cup \{ \infty \}}$ and define the degree of~$\infty$ over~$\Q_{p^2}$ as~$1$.

\begin{lemma}
  \label{l:freely-equivariant}
  Let~$z$ and~$z'$ in~$\P^1(\Cp)$ be of degree at least~$3$ over~$\Q_{p^2}$.
  Then, there exists an equivariant homeomorphism from~$\Hss(\ell) \cdot z$ to~$\Hss(\ell) \cdot z'$.
\end{lemma}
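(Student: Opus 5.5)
The plan is to show that the stabilizers in $\Hss(\ell)$ of $z$ and of $z'$ are the same closed subgroup, namely the central subgroup $Z \= \Zp^\times \cap \Hss(\ell)$ of scalars acting trivially on $\P^1(\Cp)$. Granting this, each orbit map $g \mapsto g\cdot z$ factors through a continuous bijection $\Hss(\ell)/Z \to \Hss(\ell)\cdot z$. Since $\Hss(\ell)$ is a closed subgroup of the compact group $\Gss$, the quotient $\Hss(\ell)/Z$ is compact, and the orbit is Hausdorff, so this bijection is a homeomorphism. The same holds for $z'$. Composing one homeomorphism with the inverse of the other gives $g \cdot z \mapsto g\cdot z'$, which is well defined, equivariant, and a homeomorphism.

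The key step is therefore the stabilizer computation. Let $g$ be in $\Hss(\ell)$ with $g\cdot z = z$, and view $g$ as a matrix in $\GL_2(\Z_{p^2})$ through the embedding $\End(\Fss) \to M_2(\Z_{p^2})$. The case $z = \infty$ is excluded because its degree is $1$. Writing $g = \begin{pmatrix} a & b \\ c & d\end{pmatrix}$, the condition $g\cdot z = z$ reads $c z^2 + (d-a) z - b = 0$, which is a polynomial equation with coefficients in $\Z_{p^2}$ of degree at most $2$. Since $z$ has degree at least $3$ over $\Q_{p^2}$, all its coefficients must vanish: $c = 0$, $b = 0$, and $a = d$. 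Hence $g$ is a scalar matrix. The image of $\Bss$ in $M_2(\Q_{p^2})$ spans $M_2$ over $\Q_{p^2}$, so scalars in the image correspond to the center $\Qp$ of $\Bss$. It follows that $g$ lies in $\Zp^\times \cap \Gss$. Conversely, by the remark in Section~\ref{ss:deformation-spaces}, $\Zp^\times$ acts trivially, and its elements act projectively as scalars. The stabilizer of $z$ is therefore exactly $Z$, independently of $z$, and the same holds for $z'$.

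The step I expect to need most care is the identification of scalar matrices in the image of $\Bss$ with the center $\Qp$. I would handle it by noting that $\Bss \otimes_{\Qp} \Q_{p^2} \cong M_2(\Q_{p^2})$ through the given embedding, so the centralizer of the image is $\Q_{p^2}$-scalars, and intersecting with the image of $\Bss$ gives $\Qp$.

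A second point is closedness and topology: $Z$ is closed in $\Hss(\ell)$, and the action map $\Hss(\ell)\times\P^1(\Cp)\to \P^1(\Cp)$ is continuous because it is given by Möbius transformations with coefficients depending continuously on $g$. Nothing beyond these facts should be needed; in particular, Theorem~\ref{t:no_atoms_hecke} and Lemma~\ref{l:strongly_irred_action} are not used.
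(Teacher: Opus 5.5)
Your proposal is correct and matches the paper's proof: the paper works with the image group $\cH$ of $\Hss(\ell)$ in $\PGL_2(\Z_{p^2})$, uses that a non-identity M\"obius transformation fixes only points of degree at most $2$ over $\Q_{p^2}$ (your quadratic-equation argument), deduces that $\ev_z$ and $\ev_{z'}$ are homeomorphisms from the compact group $\cH$ onto the two orbits, and composes them. Your identification of the stabilizer with $\Zp^\times \cap \Hss(\ell)$ is correct but not needed. It suffices that both stabilizers equal the kernel of the action on $\P^1(\Cp)$.
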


\begin{proof}
  For each~$g$ in~$\Hss(\ell)$, denote by~$\psi_g$ the corresponding M{\"o}bius transformation, and put ${\cH \= \{ \psi_g \: g \in \Hss(\ell) \}}$.
  Then, ${\cH}$ is a compact subgroup of~$\PGL_2(\Z_{p^2})$.
  For each~$w$ in~$\P^1(\C_p)$, denote by ${\ev_{w} \: \cH \to \P^1(\C_p)}$ the evaluation map given by ${\ev_{w}(\psi) \= \psi(w)}$.

  Every element of~$\cH$ distinct from the identity has at most~$2$ fixed points, each of which is of degree at most~$2$ over~$\Q_{p^2}$.
  Since the degree of~$z$ over~$\Q_{p^2}$ is at least~$3$ by hypothesis, it follows that the stabilizer of~$z$ in~$\cH$ is trivial.
  Thus, ${\ev_z}$ is injective.
  Since~$\cH$ is compact and ${\ev_z(\cH) = \Hss(\ell) \cdot z}$, it follows that~$\ev_z$ is a homeomorphism from~$\cH$ to~$\Hss(\ell) \cdot z$.
  Similarly, ${\ev_{z'}}$ is a homeomorphism from~$\cH$ to~$\Hss(\ell) \cdot z'$.
  Denote by ${h \: \Hss(\ell) \cdot z \to \Hss(\ell) \cdot z'}$ the homeomorphism defined by ${h \= \ev_{z'} \circ \ev_z^{-1}}$.
  To prove that~$h$ is equivariant, let~$g$ in~$\Hss(\ell)$ and~$z_0$ in ${\Hss(\ell) \cdot z}$ be given, and let~$g_0$ in~$\Hss(\ell)$ be such that ${g_0 \cdot z = z_0}$.
  Then,
  \begin{equation}
    \label{eq:124}
    \ev_z^{-1}(z_0)
    =
    \ev_z^{-1}(g_0 \cdot z)
    =
    \psi_{g_0},
    h(z_0)
    =
    \ev_{z'}(\psi_{g_0})
    =
    g_0 \cdot z',
  \end{equation}
  \begin{equation}
    \label{eq:125}
    \ev_z^{-1}(g \cdot z_0)
    =
    \ev_z^{-1}((gg_0) \cdot z)
    =
    \psi_{gg_0},
  \end{equation}
  and
  \begin{equation}
    \label{eq:126}
    h(g \cdot z_0)
    =
    \ev_{z'}(\psi_{gg_0})
    =
    (gg_0) \cdot z'
    =
    g \cdot (g_0 \cdot z')
    =
    g \cdot h(z_0).
    \qedhere
  \end{equation}
\end{proof}

The proof of Proposition~\ref{p:freely-equivariant} relies on the following terminology.
A nonempty closed subset~$\cA$ of~$\P^1(\Cp)$ is \emph{$\Hss(\ell)$-invariant} if, for every~$g$ in~$\Hss(\ell)$, it satisfies~${g(\cA) = \cA}$, and it is \emph{$\Hss(\ell)$\nobreakdash-minimal} if it is $\Hss(\ell)$-invariant and does not contain a proper $\Hss(\ell)$-invariant closed subset.

\begin{proof}[Proof of Proposition~\ref{p:freely-equivariant}]
  Put ${z_0 \= z}$ if~$z$ belongs to~$\P^1(\Qpalg)$.
  Otherwise, choose~$z_0$ in~$\P^1(\Qpalg)$ be of degree at least~$3$ over~$\Q_{p^2}$.
  In all of the cases, there is an equivariant homeomorphism ${h \: \Hss(\ell) \cdot z \to \Hss(\ell) \cdot z_0}$ (Lemma~\ref{l:freely-equivariant}).
  Fix a finite field extension~$\cK_0$ of~$\Q_{p^2}$ inside~$\Cp$ such that~$\P^1(\cK_0)$ contains~$z_0$.

  By~\cite[Theorem~1.5]{BQ14} and Lemma~\ref{l:strongly_irred_action}, the map ${\nu \mapsto \supp(\nu)}$ defines a bijection between
  \begin{equation}
    \label{eq:127}
    \{\mu\text{-ergodic probability measures on } \P^1(\cK_0) \}
    \text{ and }
    \{\Hss(\ell) \text{-minimal subsets of } \P^1(\cK_0) \}.
  \end{equation}
  Hence, the $\Hss(\ell)$-minimal set~$\Hss(\ell) \cdot z_0$ supports a unique $\mu$-ergodic measure~$\xi_{z_0}$.
  It follows that~$\xi_{z_0}$ is the unique $\mu$-stationary measure supported on the compact set~$\Hss(\ell) \cdot z_0$, see, \emph{e.g.}, \cite[Lemma~2.10]{BQ16book}.
  Since there is an equivariant homeomorphism from~$\Hss(\ell) \cdot z$ to~$\Hss(\ell) \cdot z_0$, it follows that~$\Hss(\ell) \cdot z$ supports a unique $\mu$-stationary measure~$\xi_z$ and this measure is $\mu$-ergodic.
  This proves item~$(i)$.

  To prove item~$(ii)$, let~$z'$ be in~$\Hss(\ell) \cdot z$.
  Combining Lemma~\ref{l:strongly_irred_action} with~\cite[Theorems~1.1(i) and~1.3]{BQ14} yields that, for $\mu^{\otimes \N}$-almost every sequence~$(b_n)_{n=1}^\infty$ of elements of~$\Hss(\ell)$, the limits in the weak topology
  \begin{equation}
    \label{eq:128}
    \lim_{n\to \infty} \frac{1}{n} \sum_{k = 1}^n \mu^{\ast k} \ast \delta_{h(z')}
    \text{ and }
    \lim_{n\to \infty} \frac{1}{n} \sum_{k = 1}^n \delta_{b_k \cdots b_1 h(z')}
  \end{equation}
  exist and are $\mu$-stationary.
  It follows that the limits
  \begin{equation}
    \label{eq:129}
    \lim_{n\to \infty} \frac{1}{n} \sum_{k = 1}^n \mu^{\ast k} \ast \delta_{z'}
    \text{ and }
    \lim_{n\to \infty} \frac{1}{n} \sum_{k = 1}^n \delta_{b_k \cdots b_1 z'}
  \end{equation}
  also exist and are $\mu$-stationary.
  Clearly, each of these measures is supported on~$\Hss(\ell) \cdot z'$, which equals~$\Hss(\ell) \cdot z$.
  By item~$(i)$ each of these measures equals~$\xi_z$.
  This completes the proof of item~$(ii)$ and of the proposition.
\end{proof}

\begin{proof}[Proof of Theorem~\ref{t:random_walks}]
  Note that
  \begin{equation}
    \label{eq:130}
    \ellc
    \subseteq
    (\Zp^{\times})^2 \cup \ell (\Zp^{\times})^2
    \text{ and }
    \Zp^{\times} \Hss(\ell)
    =
    \Zp^{\times}(S_1(\ss,\ss) \cup S_\ell(\ss,\ss)).
  \end{equation}

  Put
  \begin{equation}
    \label{eq:131}
    \hnu_x
    \=
    (\Ev^{x,\ss})_*\left(\frac{\mu_{1}^{\ss,\ss}+\mu_\ell^{\ss,\ss}}{2} \right),
  \end{equation}
  so ${\nu_x = (\Phiss)_* \hnu_x}$.
  The equivariance of~$\Phiss$ together with~\eqref{eq:86} imply, for every~$g$ in~$\Hss(\ell)$,
  \begin{equation}
    \label{eq:132}
    g_*\nu_x
    =
    (\Phiss\circ g)_* \hnu_x
    =
    \left(\Phiss\circ \Ev^{x,\ss} \right)_*\left(\frac{\mu_{\nr(g)}^{\ss,\ss}+\mu_{\ell \nr(g)}^{\ss,\ss}}{2} \right).
  \end{equation}
  Combined with the inclusion in~\eqref{eq:130} and the fact that the subgroup~$\Zp^{\times}$ of~$\Gss$ acts trivially on~$\hDss$, this implies
  \begin{equation}
    \label{eq:133}
    (\Ev^{x,\ss})_*(\mu_{\nr(g)}^{\ss,\ss}+\mu_{\ell \nr(g)}^{\ss,\ss})
    =
    (\Ev^{x,\ss})_*(\mu_{1}^{\ss,\ss}+\mu_{\ell}^{\ss,\ss})
    \text{ and }
    g_* \nu_x
    =
    \nu_x.
  \end{equation}
  This proves that~$\nu_x$ is~$\Hss(\ell)$-invariant.
  Using that~$\Zp^{\times}$ acts trivially on~$\hDss$ and hence on~$\P^1(\Cp)$, the equality in~\eqref{eq:130} implies
  \begin{multline}
    \label{eq:134}
    \supp(\nu_x)
    =
    \Phiss((\supp(\mu_{1}^{\ss,\ss}) \cup \supp(\mu_{\ell}^{\ss,\ss})) \cdot x)
    =
    \Phiss((S_1(\ss,\ss) \cup S_\ell(\ss,\ss)) \cdot x)
    \\ =
    (S_1(\ss,\ss) \cup S_\ell(\ss,\ss)) \cdot \Phiss(x)
    =
    \Hss(\ell) \cdot \Phiss(x).
  \end{multline}
  This proves the first equality in~\eqref{eq:20}.
  Moreover, by~\eqref{eq:def_mu_E,e,coset} and~\eqref{eq:def_mu_coset},
  \begin{equation}
    \label{eq:135}
    (\Piss)_* \hnu_x
    =
    \frac{\mu^{E,e}_{\NE}+\mu^{E,e}_{\ell\NE}}{2}
    \text{ and }
    \supp((\Piss)_* \hnu_x)
    =
    \left(\OrbEc \cup \overline{\Orb_{\ell\NE}(E)} \right) \cap \Dss.
  \end{equation}
  This implies the second equality in~\eqref{eq:20}.

  Let~$\mu$ be a Borel probability measure on~$\Hss(\ell)$ whose support generates~$\Hss(\ell)$ as a closed subsemigroup of~$\Gss$.
  The measure~$\nu_x$ is~$\mu$-stationary because it is $\Hss(\ell)$-invariant.
  By Proposition~\ref{p:freely-equivariant}$(i)$, ${\nu_x = \xi_{\Phiss(x)}}$ and thus~$\nu_x$ is $\mu$-ergodic.
  The remaining assertions of item~$(ii)$ follow by combining Proposition~\ref{p:freely-equivariant}$(ii)$ and ${\xi_{\Phiss(x)} = \nu_x}$.
  To complete the proof of item~$(i)$, let~$x'$ in~$\hDss$ be such that~$\Phiss(x')$ belongs to~$\Hss(\ell) \cdot \Phiss(x)$.
  Then, ${\nu_{x'}}$ is $\mu$-ergodic and~\eqref{eq:20} yields
  \begin{equation}
    \label{eq:136}
    \supp(\nu_{x'})
    =
    \Hss(\ell) \cdot \Phiss(x')
    =
    \Hss(\ell) \cdot \Phiss(x).
  \end{equation}
  Together with Proposition~\ref{p:freely-equivariant}$(i)$, this implies ${\nu_{x'} = \xi_{\Phiss(x)} = \nu_x}$ and completes the proof of item~$(i)$.
\end{proof}

\bibliographystyle{alpha}
\bibliography{papers}

\end{document}